\documentclass[oneside,a4paper]{amsart}
\usepackage[margin=3cm]{geometry}
\usepackage{graphicx,amssymb}
\usepackage{xcolor}
\usepackage{amsthm,amsfonts,amssymb,latexsym,epsfig,mathrsfs,yfonts,marvosym,latexsym,epsfig}
\usepackage{amsmath}

\usepackage{mathtools}
\mathtoolsset{showonlyrefs=true}
\usepackage{hyperref}
\hypersetup{bookmarksdepth=3,
            colorlinks = true,%
            citecolor = {blue},%
            linkcolor = {red}%
}%
\usepackage{circuitikz}
\usepackage{multicol}
\usepackage[all]{xy}
\AtEndDocument{\vfill\eject\batchmode}
\usepackage{euflag}
\DeclareMathAlphabet\oldmathcal{OMS}        {cmsy}{b}{n}
\SetMathAlphabet\oldmathcal{normal}{OMS}{cmsy}{m}{n}
\DeclareMathAlphabet\oldmathbcal{OMS}       {cmsy}{b}{n}
\usepackage[active]{srcltx}
\usepackage{eucal}

\newtheorem{theorem}{Theorem}[section]
\newtheorem{lemma}[theorem]{Lemma}
\newtheorem{proposition}[theorem]{Proposition}
\newtheorem{corollary}[theorem]{Corollary}

\newtheorem{question}[theorem]{Question}
\newtheorem{def/prop}[theorem]{Definition/Proposition}

\theoremstyle{definition}
\newtheorem{definition}[theorem]{Definition}
\newtheorem{remark}[theorem]{Remark}

\newtheorem{example}{Example}[section]

\DeclareSymbolFont{bbold}{U}{bbold}{m}{n}
\DeclareSymbolFontAlphabet{\mathbbold}{bbold}

\def\BOne{\mathchoice{\scalebox{1.16}{$\displaystyle\mathbbold 1$}}{\scalebox{1.16}{$\textstyle\mathbbold 1$}}{\scalebox{1.16}{$\scriptstyle\mathbbold 1$}}{\scalebox{1.16}{$\scriptscriptstyle\mathbbold 1$}}}
\def\fract#1#2{\raise4pt\hbox{$ #1 \atop #2 $}}

\def\bbc{{\mathbb C}}

\def\bbn{{\mathbb N}}

\def\bbp{{\mathbb P}}
\def\bbq{{\mathbb Q}}
\def\bbr{{\mathbb R}}

\def\bbt{{\mathbb T}}

\def\bbz{{\mathbb Z}}
\def\phi{\varphi}

\def\bfV{{\bf V}}

\def\bfS{{\bf S}}

\def\cald{{\mathcal D}}

\def\calf{{\mathcal F}}

\def\call{{\mathcal L}}

\def\calo{{\mathcal O}}

\def\caly{{\mathcal Y}}

\def\calS{{\mathcal S}}

\def\calC{{\oldmathcal C}}

\def\calL{{\oldmathcal L}}

\def\gg{{\mathfrak g}}

\def\gz{{\mathfrak z}}

\def\v{{\rm{v}}}

\def\<{\langle}
\def\>{\rangle}
\def\ra#1{\to}

\newcommand{\Aut}{\mathrm{Aut}}

\newcommand{\transpose}{\intercal}
\newcommand{\Tr}{\operatorname{Tr}}
\newcommand{\Hess}{\operatorname{Hess}}
\DeclarePairedDelimiter{\norm}{\lVert}{\rVert}
\DeclarePairedDelimiter{\abs}{\lvert}{\rvert}

\def\fract#1#2{\raise4pt\hbox{$ #1 \atop #2 $}}

\def\hook{\mathbin{\hbox to 6pt{%
                 \vrule height0.4pt width5pt depth0pt
                 \kern-.4pt
                 \vrule height6pt width0.4pt depth0pt\hss}}}

\def\ra{\rightarrow}
\def\kt{\mathfrak{t}}

\def\End{\rm{End}}

\def\scal{{\mathrm{scal}}}
\def\Scaltot{\mathbf{Scal}}
\def\Voltot{\mathbf{Vol}}
\newcommand{\CRYam}{\mathcal{Y}_{CR}}
\newcommand{\EH}{\mathrm{EH}}
\newcommand{\EHmin}{\mathrm{EH}_{\mathrm{min}}}
\newcommand{\InfFun}{\mathbf{I}}
\DeclareMathOperator{\ScalTW}{\mathrm{Scal}^{\mathrm{TW}}}
\newcommand{\Reebcone}{\mathfrak{t}_+}
\newcommand{\Torus}{\mathbb{T}}
\newcommand{\tstX}{\mathcal{X}}
\newcommand{\tstL}{\mathcal{L}}%

\makeatletter
\@namedef{subjclassname@2020}{\textup{2020} Mathematics Subject Classification}
\makeatother

\title{The toric CR Yamabe problem}

\date{\today}

\author[E.\ Legendre]{Eveline Legendre}
\address[Eveline Legendre]{
Institut Camille Jordan\\
Universit\'e Claude Bernard Lyon 1\\
43 Boulevard du 11 novembre 1918\\
69622 Villeurbanne Cedex, France
}
\email{legendre@math.univ-lyon1.fr}

\author[C.\ Scarpa]{Carlo Scarpa}
\address[Carlo Scarpa]{
Institut Camille Jordan\\
Universit\'e Claude Bernard Lyon 1\\
43 Boulevard du 11 novembre 1918\\
69622 Villeurbanne Cedex, France
}
\email{scarpa@math.univ-lyon1.fr}

\author[C.\ T{\o}nnesen-Friedman]{Christina T{\o}nnesen-Friedman}
\address[Christina T{\o}nnesen-Friedman]{
Department of Mathematics\\
Union College\\
807 Union Street Schenectady\\
New York 12308
}
\email{tonnesec@union.edu}

\keywords{CR Yamabe problem, toric Sasaki manifolds}
\thanks{
    E.L.\ is supported by the ANR-FAPESP grant PRCI ANR-21-CE40-0017 and partially supported by a AAP grant of the University of Lyon~1.
    C.S.\ is supported by a MSCA Postdoctoral Fellowship, funded by the Research and Innovation framework programme Horizon Europe {\footnotesize\euflag} under grant agreement n.101149320.
}
\subjclass[2020]{53C25 (primary), 58E11, 53C18, 32Q15, 53D10, 32V20}

\begin{document}

\begin{abstract}
   We study the equivariant CR Yamabe problem for a fixed-point-free torus action on a co-oriented compact contact manifold. Such examples arise naturally in the study of Sasakian manifolds with constant scalar curvature. In the toric case, we show that this problem is equivalent to a kind of boundary value problem for an elliptic PDE on a pair of functions defined on a convex, polyhedral domain. We provide examples of solutions and prove that many contact toric manifolds admit compatible CR structures with distinct signs of CR Yamabe invariants. 
\end{abstract}

\maketitle

\setcounter{tocdepth}{1}
\tableofcontents

\section{Introduction}

In this paper, we study an equivariant version of the CR Yamabe problem on a compact pseudohermitian manifold. More precisely, we consider a compact co-oriented contact manifolds $(N,D)$ of real dimension $2n+1$, with an integrable CR structure $J\in\Gamma(\End\, D)$. The \emph{CR Yamabe problem}, introduced by Jerison and Lee \cite{JerisonLee_CRYamabe} consists in showing that there exists a contact form $\eta$ for $D$ which is \emph{positive} (that is compatible with the co-orientation of $(N,D)$, so that $(D,J,\eta)$ is a \emph{pseudohermitian} structure), and such that the \emph{Tanaka-Webster scalar curvature} of $(D,J,\eta)$ is constant
\begin{equation}
    \ScalTW(\eta,J) = \text{const.}
\end{equation}
One possible way of finding solutions of this problem is to minimise the so-called \emph{CR Einstein--Hilbert functional}
\begin{equation}
    \EH(\eta,J) \coloneqq \frac{\Scaltot(\eta,J)}{\Voltot(\eta,J)^{\frac{n}{n+1}}} \coloneqq \frac{\int_N \ScalTW(\eta)\,\eta\wedge(d\eta)^n}{\left(\int_N \eta\wedge(d\eta)^n\right)^{\frac{n}{n+1}}}
\end{equation}
over the space of positive contact forms of $(D,J)$, that is the conformal class of a fixed contact form. Indeed, one can show \cite{JerisonLee_CRYamabe} that for each positive contact form $\eta$ the following quantity, called the \emph{CR Yamabe energy}, is well defined
\begin{equation}\label{eq:CRYam_def}
    \CRYam(\eta,J)\coloneqq \inf_{f>0,\text{ smooth}} \EH(f^{-1}\eta,J),
\end{equation}
and that if any function $f$ realises the infimum in \eqref{eq:CRYam_def}, then $(D,J,f^{-1}\eta)$ is a pseudohermitian structure of constant Tanaka-Webster scalar curvature. This program was successfully carried out for $n>1$ to solve the CR Yamabe problem:~\cite{JerisonLee_CRYamabe,JerisonLee_normalcoords,JerisonLee_extremalsCRYam,Gamara_Yacoub}, while for $n=1$ it is still not clear in complete generality whether the infimum is achieved; nonetheless, the existence of pseudohermitian structures with constant Tanaka-Webster scalar curvature has been established~\cite{Gamara}.

\smallskip

As mentioned above, we are interested here in an \emph{equivariant} version of this problem, in which we consider a compact torus $\Torus$ of automorphisms of $(N,D,J_0,\eta_0)$, for some background $\Torus$-invariant pseudohermitian structure $(J_0,\eta_0)$, and look for $\Torus$-invariant pseudohermitian forms $\eta$ of constant Tanaka-Webster scalar curvature (cscTW, from now on). Other types of equivariant versions of the classical and CR Yamabe problems, with respect to a compact or a finite group, have been studied in the past: see \S\ref{sec:inv_yamabe} below for an overview of some results.  

There are many reasons for considering a $\Torus$-invariant version of the CR Yamabe problem: for example, we have a particular interest in Sasaki structures, a special case of pseudohermitian structures whose Reeb vector field is \emph{Killing}, i.e.\ its flow preserves the Riemannian metric (and thus the CR structure). These Sasaki structures are always invariant by a non-trivial torus (generated by their Reeb vector field). In what follows we will always assume that the background structure $(N,D,J_0,\eta_0)$ is Sasaki and that the Reeb vector field of $\eta_0$, denoted $\xi_0$, lies in $\kt \coloneqq \operatorname{Lie}\Torus$. Note in particular that this implies that the $\Torus$ action on $N$ has no fixed points.

With this setup, the existence of $\Torus$-invariant cscTW pseudo-hermitian structures on $(N,D,J_0)$ is ensured by a result of Zhang \cite{Zhang_Kcontact} and follows also from \cite{Ho_CRYam-equiv} as we explain in \S\ref{sec:inv_yamabe} below. Some questions remain open, such as the uniqueness or the isolation of solutions (see \cite{BHLTF20} for the Sasaki version of this problem), which is not known for the case of positive Tanaka-Webster scalar curvature.

\begin{remark}\label{remTwins}
    For a compact smooth manifold $N$ with a fixed CR structure $(N,D, J)$, the notion of \emph{Extremal Sasaki twins} was defined in \cite{BHLT25} to be two extremal Sasaki structures both compatible with $(D,J)$ and having commuting non-collinear Sasaki-Reeb vector fields. The special case where both of these extremal Sasaki structures have constant scalar curvature, considered in Section~$5.2$ (and Example $4$) of \cite{BHLT25}, might be considered as a (very) special case of non-uniqueness of $\bbt$-invariant cscTW pseudohermitian forms $\eta$. To the best of our knowledge, these are the only known examples of distinct $\Torus$-invariant cscTW structures in the same conformal class.
\end{remark}

On the one hand, the $\Torus$-invariance complicates the CR Yamabe problem: many of the classical results for the CR Yamabe problem rely on the study of so-called ``bubbles'' for the cscTW equation, which are \emph{not} invariant under the $\Torus$-action we consider. Developing new analytic and geometric tools to tackle this problem is an exciting challenge. On the other hand, in the \emph{toric} case that we consider here, i.e.\ when $\dim \Torus = (\dim N +1)/2$, \emph{the $\Torus$-invariant cscTW equation is equivalent to a certain non-linear PDE on functions defined on a compact, convex set of $\kt^*$}, see Proposition~\ref{prop:toricSasaki_intro} below.

We exploit this dimensional reduction to compute the toric CR Yamabe energy in some \emph{asymptotic} cases and provide many explicit examples of cscTW structures. We also give a positive answer to a question mentioned in \cite{SungTakeuchi_CRYam}: {\it given a fixed contact structure $D$ on a compact smooth manifold $N$, is it possible to find distinct compatible CR structures on $D$ with different signs of the CR Yamabe energy?}

\smallskip

In this paper, we will mostly be interested in the CR Yamabe energy as a function of the CR structure. More precisely, we fix a co-dimension $1$, co-oriented  contact distribution $D$ on $N$ and consider the set $\mathcal{CR}(N,D)^\Torus$ of $\Torus$-invariant integrable and pseudo-convex CR structures on $D$. Then the space of $\Torus$-invariant pseudo-hermitian structures on $D$ is the product of $\mathcal{CR}(N,D)^\Torus$ with the space of positive $\Torus$-invariant contact forms on $D$.

It follows from \cite{AfeltraMalchiodi_EH} and \cite{LahdiliLegendreScarpa_EHDF} that the critical points of the Einstein-Hilbert action on the space of $\Torus$-invariant pseudo-hermitian structures on $D$ are exactly the $\Torus$-invariant Sasaki structures of constant (transversal) scalar curvature, i.e.\ those cscTW structures that are Sasaki. We will see below that this result is easily proved in the toric case, see Theorem~\ref{theocritEH=cscS}.  Note that in \cite{LahdiliLegendreScarpa_EHDF} the point of view taken is the ``complex'' one, where one considers instead the pseudo-hermitian structures compatible with a fixed transversal holomorphic structure but vary the contact distribution. The two points of view are equivalent, see \cite[Appendix A]{ApostolovCalderbankLegendre_weighted}.

Given $J\in \mathcal{CR}(N,D)^\Torus$ and any $\Torus$-invariant contact form $\eta_0$ on $N$, the $\Torus$-invariant version of the CR Yamabe energy is
\begin{equation}\label{eq:CRYam_T_def}
    \CRYam^\Torus(J) \coloneqq \inf_{\substack{f>0, \text{ smooth},\\ \Torus\text{-invariant}}} \EH(f^{-1}\eta_0,J).
\end{equation} 
Any function realizing this infimum will give us a $\Torus$-invariant cscTW form conformal to $\eta_0$.

Compared to the classical CR Yamabe invariant, which is bounded above by the Yamabe invariant of the standard CR-sphere, the $\Torus$-equivariant Yamabe invariant of a CR-manifold is bounded above by a finer invariant, that we denote $\EH_{\min}$ and which is the \emph{minimum of the Einstein-Hilbert functional on the Sasaki-Reeb cone}. Indeed, as highlighted in \cite{LahdiliLegendreScarpa_CRYam}  \begin{equation}\label{eq:YamleqEmin}
    \CRYam^\Torus(J) \leq \EH_{\min}        \qquad \qquad  \forall J\in \mathcal{CR}(N,D)^\Torus.
\end{equation}  Moreover, they proved (again, from the complex point of view) that the bound being reached by an element $J$ guaranties the K-semistability of the associated Sasaki structure, as well as the existence of a  constant transversal scalar curvature Sasaki form (cscS, from now on). There has been tremendous developments recently around the existence of constant scalar curvature K\"ahler or Sasaki metrics (and more generally \emph{weighted cscK metrics} \cite{Lahdili_weighted}) in connection with K-stability. The toric case is particularly well-understood and establishes an equivalence between the existence of cscK (or cscS) structures and the positivity of a certain functional on the space of convex, piecewise-linear functions. We refer the reader to \cite{Donaldson_stability_toric} and the notes \cite{Apostolov_toric} for an overview of this theory.
Our results in this direction are still partial, but they suggest that at least in the toric setting, K-stability can be used to better understand the behavior of the toric CR Yamabe energy and formulate some interesting conjectures.

\subsection{Summary of results}
Our results hinge on the fact that in the toric setting, the contact manifold $(N,D)$, with Reeb field $\xi_0 \in \kt$ is completely characterized by a convex compact polytope $P_{\xi_0}$ of an affine space via the theory of Delzant~\cite{Delzant_polytope}, Lerman~\cite{Lerman:contactToric} and others, that we recall in Section~\ref{sec:toric}. Using the Sasaki version of Guillemin's seminal work \cite{Guillemin_toric}, this allows us to translate the $\Torus$-equivariant CR Yamabe problem in terms of functions defined on $P_{\xi_0}$. We briefly explain the main point of this construction, and we refer the reader to \S\ref{sss:ReviewSasaki} for more details. The polytope $P_{\xi_0}$ can be identified with a set of linear inequalities,
\begin{equation}
    P_{\xi_0}=\left\lbrace x\in\mathbb{R}^n \mid \ell_j(x)\geq 0,\ j=1,\dots,d \right\rbrace.
\end{equation}
The $\Torus$-invariant Sasaki structures on $(N,D)$ with Reeb vector field $\xi_0$ are parameterized by the so-called \emph{normalised symplectic potentials}, the set of which we denote $\mathcal{S}_o(P_{\xi_0},\ell_1,\dots,\ell_d)$. These are strictly convex smooth functions on the interior of $P_{\xi_0}$ with a prescribed boundary behavior, see \eqref{eqSympPotdefn}, and all reaching their minimum on a fixed point $o\in P_{\xi_0}^\circ$.

The Sasaki structure $(D,J_\phi,\eta_0,\xi_0)$ is defined explicitly through the Hessian of $\varphi \in \mathcal{S}_o(P_{\xi_0},\ell_1,\dots,\ell_d)$, see Proposition~\ref{prop:toricSasaki_setup}. Note that the contact form $\eta_0$ is fixed, and does not depend on $\phi$. Applying directly the toric dictionary, we observe:
\begin{proposition}\label{prop:toricSasaki_intro}
    For $(\varphi,f)\in\mathcal{S}_o(P_{\xi_0},\ell_1,\dots,\ell_d)\times \mathcal{C}^\infty(P_{\xi_0},\mathbb{R}_{>0})$, the pseudo-hermitian structure $(D,J_\phi,f^{-1}\eta_0)$ is a cscTW if and only it solves 
    \begin{equation}\label{cscTWeqIntro}
        (\varphi^{ij}f^{-n-1})_{,ij}  = (n+1)f^{-n-2}\left(\varphi^{ij}f_{,ij}-C\right), 
    \end{equation}
    on $P_{\xi_0}^\circ$, where $(\varphi^{ij})$ is the inverse Hessian of $\varphi$ and $C$ is a constant, the value of the of Tanaka-Webster scalar curvature.
\end{proposition}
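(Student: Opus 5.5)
We will combine three ingredients: the toric description of the Sasaki structure $(D,J_\phi,\eta_0,\xi_0)$ from Proposition~\ref{prop:toricSasaki_setup}, the classical formula for the Tanaka--Webster scalar curvature under a conformal change of contact form, and Abreu's formula for the transversal scalar curvature of a toric Sasaki structure. Since $(D,J_\phi,\eta_0)$ is Sasaki, its Tanaka--Webster scalar curvature is (up to a fixed normalising constant) the transversal Kähler scalar curvature. In action--angle coordinates this is given by Abreu's formula,
\[
\ScalTW(\eta_0,J_\phi) = -(\varphi^{ij})_{,ij},
\]
where derivatives are taken on $P_{\xi_0}^\circ$ via the identification of $\Torus$-invariant functions with functions of the moment coordinates $x$. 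We take this from the toric dictionary recalled in \S\ref{sss:ReviewSasaki}.

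Next we use the conformal transformation law. Write $f^{-1}\eta_0 = u^{2/n}\eta_0$ with $u=f^{-n/2}$. The Jerison--Lee formula gives
\[
\ScalTW(u^{2/n}\eta_0) = u^{-1-2/n}\Bigl(b_n\Delta_b u + \ScalTW(\eta_0) u\Bigr), \qquad b_n = 2+\tfrac{2}{n},
\]
with $\Delta_b$ the positive sublaplacian, and with the paper's normalisation of $\ScalTW$ possibly changing the constants. For $\Torus$-invariant $u$ on a toric Sasaki manifold, the sublaplacian coincides with the transversal Laplacian. In symplectic coordinates this becomes $-(\varphi^{ij}u_{,i})_{,j}$, which is a standard toric identity because the volume form is $dx\,d\theta$ up to the Reeb direction. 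Equivalently, $\Delta u = -\varphi^{ij}u_{,ij} - (\varphi^{ij})_{,j}u_{,i}$.

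The core of the proof is an algebraic rearrangement. Substituting $u=f^{-n/2}$ and setting $\ScalTW(f^{-1}\eta_0,J_\phi)=C$, we multiply through by a suitable power of $f$. We then want to show that the resulting equation
\[
C = f^{\frac{n+2}{2}}\Bigl(-b_n\bigl(\varphi^{ij}(f^{-n/2})_{,i}\bigr)_{,j} - (\varphi^{ij})_{,ij}f^{-n/2}\Bigr)
\]
equals the stated form. Expand $(\varphi^{ij}f^{-n-1})_{,ij}$ via the product rule into three pieces:
\[
(\varphi^{ij})_{,ij}f^{-n-1} + 2(\varphi^{ij})_{,j}(f^{-n-1})_{,i} + \varphi^{ij}(f^{-n-1})_{,ij}.
\]
Similarly expand $\bigl(\varphi^{ij}(f^{-n/2})_{,i}\bigr)_{,j}$. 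The terms involving $\varphi^{ij}f_{,i}f_{,j}$ and $(\varphi^{ij})_{,j}f_{,i}$ must then match with the right coefficients, and this is the main obstacle.

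The first-derivative terms in $\varphi^{ij}$ do not match directly, since the coefficient $2(n+1)$ differs from $b_n\tfrac n2 = n+1$. So we will instead rewrite the transversal Laplacian term differently, or check that the combination actually arising is
\[
(\varphi^{ij}f^{-n-1})_{,ij} - (n+1)f^{-n-2}\varphi^{ij}f_{,ij}
\]
rather than the plain Laplacian. Using $(f^{-n-1})_{,ij} = -(n+1)f^{-n-2}f_{,ij} + (n+1)(n+2)f^{-n-3}f_{,i}f_{,j}$, we obtain
\[
(\varphi^{ij})_{,ij}f^{-n-1} - 2(n+1)f^{-n-2}(\varphi^{ij})_{,j}f_{,i} - 2(n+1)f^{-n-2}\varphi^{ij}f_{,ij} + (n+1)(n+2)f^{-n-3}\varphi^{ij}f_{,i}f_{,j}.
\]
Here $\varphi^{ij}f_{,ij}$ appears twice, so the coefficients must be reconciled with $b_n$. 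If they disagree with the naive Jerison--Lee coefficients, the resolution is likely that the relevant Sasaki structure has Reeb field $f\xi_0$-type data. In that case the transversal metric rescales by $f^{-1}$ while the transverse complex structure changes. We would then instead use the weighted Abreu formula of Apostolov--Calderbank (scalar curvature of the Sasaki structure with Reeb field $\xi$ as $-(\varphi^{ij}/\ell_\xi)_{,ij}$-type expressions), adapted to non-Sasaki $f$. We expect the constants to close up once the paper's normalisation of $\ScalTW$ is fixed, and we would verify this first on $f=\mathrm{const}$ and on affine $f$, where the equation reduces to the known weighted cscS equation. Finally, the two implications are immediate, since each step above is an equivalence.
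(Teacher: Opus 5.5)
\textbf{There is a genuine gap: the constants in the conformal law, which are the whole proof, are never established.} Your outline (conformal change law, Abreu's formula, product-rule expansion of $(\varphi^{ij}f^{-n-1})_{,ij}$) is the same as the paper's. Everything hinges on the conformal change law in the normalisation where $\ScalTW(\eta_0,J_\varphi)=-(\varphi^{ij})_{,ij}$, namely \eqref{eq:scalTW0}:
\[
\ScalTW(f^{-1}\eta_0,J_\varphi)=-(\varphi^{ij})_{,ij}f+2(n+1)(\varphi^{ij}f_{,i})_{,j}-(n+1)(n+2)f^{-1}\varphi^{ij}f_{,i}f_{,j}.
\]
You correctly find that Jerison--Lee's $b_n$ combined with Abreu's formula gives the coefficients $n+1$ and $\tfrac{(n+1)(n+2)}{2}$ instead, and that the identity then fails. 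You then stop at ``we expect the constants to close up'', but that expectation is the entire content of the proposition.

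The discrepancy is exactly the normalising constant you allowed for in your first step and then dropped. Take the transversal K\"ahler metric $g_\varphi$ with K\"ahler form $d\eta_0$. On $\Torus$-invariant functions, Jerison--Lee's $\Delta_b$ agrees with $\Delta^{g_\varphi}$, but their Webster curvature is $R=\tfrac12\,\mathrm{scal}(g_\varphi)$. For example, their $\hat\theta$ on $S^{2n+1}$ has $R=n(n+1)/2$, while $d\hat\theta(\cdot,J\cdot)$ has scalar curvature $n(n+1)$. Doubling their law replaces $b_n$ by $2b_n$, and substituting $u=f^{-n/2}$ then gives the display above. Plugging it into your own last expansion yields
\[
(\varphi^{ij}f^{-n-1})_{,ij}-(n+1)f^{-n-2}\varphi^{ij}f_{,ij}=-f^{-n-2}\,\ScalTW(f^{-1}\eta_0,J_\varphi).
\]
This is precisely the computation in the paper's lemma relating $\ScalTW$ to the weighted scalar curvature, and it gives the equivalence at once since $f>0$.

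\textbf{The remedy you propose instead does not apply.} For non-affine $f$ the form $f^{-1}\eta_0$ is not Sasaki. Since $\xi_0(f)=0$, one has $\iota_{f\xi_0}d(f^{-1}\eta_0)=f^{-1}df\neq 0$, so its Reeb field is $f\xi_0$ plus a horizontal correction. Moreover $(D,J_\varphi,f^{-1}\eta_0)$ is Sasaki only when $f$ is affine (Remark~\ref{rem-linaffine}). So there is no Sasaki structure with ``Reeb field $f\xi_0$-type data'' to which a weighted Abreu formula could be adapted. The weighted formula $\ScalTW=-f^{n+2}(f^{-n-1}\varphi^{ij})_{,ij}$ is just the special case $\Hess f=0$ of the statement.

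Your suggested check on affine $f$ could in principle be turned into a way of fixing the two universal coefficients of the conformal law, but you do not carry it out. As written, your closing claim that every step is an equivalence rests on an identity you have not proved.
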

We will alternatively consider both $f$ and $\varphi$ as variables in equation \eqref{cscTWeqIntro}.
\begin{remark}
    The only $3$-dimensional toric contact manifold of Sasaki type is the $3$-sphere with its $\Torus \simeq S^1\times S^1$-invariant contact structure. In that case the polytope is a segment and the PDE \eqref{cscTWeqIntro} becomes an ODE, see \S\ref{ssCRyam2sphere}.
\end{remark}

\smallskip

For our main result, we consider paths in $\mathcal{S}_o(P_{\xi_0},\ell_1,\dots,\ell_d)$ of the form $\varphi_t= t^k \varphi_\infty + O(t^{k-1})$ as $t\to\infty$ for some smooth function $\varphi_\infty$ on $P$ and some $k>0$, where by $O(t^{k-1})$ we mean a collection of lower order terms. We consider two cases:
\begin{description}
    \item[C1] $\varphi_\infty$ is strictly convex. In this case, $\varphi_t\in\mathcal{S}_o(P_{\xi_0},\ell_1,\dots,\ell_d)$ for all $t\geq 0$;
    \item[C2] $\varphi_\infty$ is not convex, so that there is $T<\infty$ such that $\varphi_t\in\mathcal{S}_o(P_{\xi_0},\ell_1,\dots,\ell_d)$ for every $0\leq t < T$, and $\Hess\varphi_T$ is not invertible.
\end{description}
With this notation, our main result is the following.
\begin{theorem}\label{thm:boundary_behaviour}
    In case $1$, $\lim_{t\to +\infty} \CRYam(\varphi_t) \leq 0 $. In case $2$ instead, assume that there exists $x_T\in P^\circ$ and a unit vector $v_T$ such that
    \begin{equation}
        \langle v_T,(\Hess\varphi_T)_{x_T}v_T\rangle \in O(\abs{x-x_T}^{n-1+\alpha})
    \end{equation}
    for some $\alpha>0$. Then, $\lim_{t\to T^-} \CRYam(\varphi_t) = -\infty $.
\end{theorem}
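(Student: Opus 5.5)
The plan is to work entirely on the polytope and to use the fact that, once written in terms of the symplectic potential, the toric Einstein--Hilbert functional is \emph{linear} in the inverse Hessian $H=(\varphi^{ij})$. First I would rewrite $\EH(f^{-1}\eta_0,J_\varphi)$ from Proposition~\ref{prop:toricSasaki_intro} (equivalently, from the toric formula for $\ScalTW$ underlying \eqref{cscTWeqIntro}) as a quotient over $P=P_{\xi_0}$. Set $u=f^{-n/2}$ or a similar power. I expect the Abreu-type term $-\varphi^{ij}{}_{,ij}$, after one integration by parts using the boundary conditions \eqref{eqSympPotdefn} on $\varphi$, to produce
\begin{equation}
    \EH(f^{-1}\eta_0,J_\varphi)=\frac{\int_P \varphi^{ij}Q_{ij}(u)\,dx + c_n\int_{\partial P}u^2\,d\sigma}{\left(\int_P u^{2(n+1)/n}dx\right)^{n/(n+1)}},
\end{equation}
where $Q(u)=a_n\,du\otimes du-b_n\,u\,\Hess u$ is quadratic in $u$ with $b_n>0$, and $c_n>0$. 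I have not computed $a_n,b_n,c_n$; they have to be read off carefully from the toric dictionary. The key structural point is that the boundary term does not depend on $\varphi$ and the bulk term is $\int_P\Tr(HQ(u))$. Then $\CRYam(\varphi_t)\le \EH$ at any fixed admissible $u$, so both claims reduce to choosing test functions well.

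\emph{Case 1.} Here $\Hess\varphi_t=t^k\Hess\varphi_\infty+O(t^{k-1})$ with $\Hess\varphi_\infty>0$ on compact subsets of $P^\circ$. Hence $H_t=O(t^{-k})$ uniformly on any compact $K\subset P^\circ$. Given $\epsilon>0$, I take $u=\epsilon+\chi$, where $\chi\ge0$ is smooth with compact support in $P^\circ$; then $u>0$, so the corresponding $f$ is admissible. On $P\setminus K$ one has $Q(u)=a_n\,d\chi\otimes d\chi-b_nu\Hess\chi=0$, so the bulk term lives on $K=\operatorname{supp}\chi$ and tends to $0$ as $t\to\infty$. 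The boundary term is $c_n\epsilon^2|\partial P|$. Therefore
\begin{equation}
    \limsup_{t\to\infty}\CRYam(\varphi_t)\le \frac{c_n\epsilon^2|\partial P|}{\|\chi\|_{L^{2(n+1)/n}}^{2}}.
\end{equation}
Letting $\epsilon\to0$ gives $\lim_{t\to\infty}\CRYam(\varphi_t)\le0$. If the limit is not known to exist, I would read the statement as a statement about the $\limsup$.

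\emph{Case 2.} As $t\to T^-$ the smallest eigenvalue of $\Hess\varphi_t$ near $x_T$, taken in the direction $v_T$, goes to $0$. The largest eigenvalue $\lambda_t$ of $H_t$ therefore blows up, and its eigenvector $w_t$ is close to $v_T$ near $x_T$. I would choose $u$ locally of the form $u=1+s\langle v_T,x-x_T\rangle^2$, cut off smoothly to a positive constant away from a ball $B_r(x_T)$. At and near $x_T$ this gives $u_{v}\approx0$ and $u_{vv}=2s>0$, so $Q(u)(w_t,w_t)\le -c<0$ on a smaller ball. The bulk term then splits as
\begin{equation}
    \int_P \Tr(H_tQ(u))\le -c\int_{B_{r'}(x_T)}\lambda_t\,dx + (\text{terms bounded uniformly in } t).
\end{equation}
The bounded terms come from the other eigen-directions, which stay bounded since only one direction degenerates, and from the annulus where the cutoff acts and $H_t$ stays bounded.

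The remaining step, which I expect to be the main obstacle, is to show that $\int_{B_{r'}}\lambda_t\to+\infty$, or more precisely that the negative contribution beats the fixed boundary term and the denominator. This is where the hypothesis on $\langle v_T,\Hess\varphi_T v_T\rangle$ enters. It gives $\lambda_t\to\lambda_T\gtrsim|x-x_T|^{-(n-1+\alpha)}$, and I would combine this with Fatou's lemma. A pointwise bound alone may only give integrability at the borderline, so I would also rescale the test function, taking $u_r(x)=u((x-x_T)/r)$ with the bump at scale $r\to0$. Then I would compare the powers of $r$: the bulk term scales like $r^{n-2-(n-1+\alpha)}=r^{-1-\alpha}$, the denominator like $r^{n^2/(n+1)}$, and the boundary and regular terms stay of order $1$. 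The aim is that the negative bulk contribution dominates, giving $\CRYam(\varphi_t)\to-\infty$. The delicate points I would need to settle are the precise exponent bookkeeping, which is where the threshold $n-1+\alpha$ should appear, and making the alignment of $w_t$ with $v_T$ uniform as $t\to T$.
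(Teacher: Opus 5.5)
Your reformulation is correct. With $u=f^{-n/2}$ one recovers exactly \eqref{toricEH}, with $Q(u)=n f^{-n-1}\Hess f=\frac{2(n+2)}{n}\,du\otimes du-2u\Hess u$ and $c_n=2$.

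Case~1 is correct and is a slightly cleaner variant of the paper's argument. The paper uses $f_{m,\varepsilon}=(\varepsilon+\alpha\, d(x,o)^2)^m$, whose Hessian is spread over all of $P$. It therefore relies on the expansion $H_t=t^{-k}\bigl((\Hess\varphi_\infty)^{-1}+O(t^{-1})\bigr)$ holding uniformly up to $\partial P$. Your $u=\epsilon+\chi$ has $Q(u)$ supported in a compact $K\subset P^\circ$, so interior uniform decay of $H_t$ suffices. The mechanism is the same in both: boundary term $O(\epsilon^2)$, denominator at least $\lVert\chi\rVert_{L^{2(n+1)/n}}^2$, bulk term tending to $0$.

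Case~2 has a genuine gap exactly at the step you flag, and the route you sketch cannot close it. With smooth test functions the bulk term is an $n$-dimensional integral of $\langle v_T,H_tv_T\rangle$. The hypothesis only yields $\langle v_T,H_Tv_T\rangle\gtrsim\lvert x-x_T\rvert^{-(n-1+\alpha)}$, which is integrable over a ball when $\alpha<1$, so a fixed smooth $u$ gives only a finite bound.

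Isotropic rescaling does not help. Since $Q(u)=nf^{-n-1}\Hess f$, sign control requires $f$ concave along the lines parallel to $v_T$. But a function that is concave along these lines and constant outside $B_r(x_T)$ is constant. So your cut-off bump necessarily has $\langle v_T,Q(u)v_T\rangle>0$ somewhere in the annulus, at the same scale $r$ where $\langle v_T,H_tv_T\rangle$ is as large as in the core. The hypothesis gives only a lower bound on $\langle v_T,H_tv_T\rangle$, and no control of $H_t$ in the annulus or in other directions ($\Hess\varphi_T$ may degenerate on a large set, or in several directions). The sign of the leading term is therefore undetermined. Also, with a positive background constant the denominator stays of order $1$, not $r^{n^2/(n+1)}$.

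The missing idea is to concentrate the nonpositive Hessian of a \emph{globally concave} test function on the hyperplane $\Pi=\{\langle x-x_T,v_T\rangle=0\}$. The paper takes $f_{pl}$ piecewise linear in $\langle x,v_T\rangle$, with slopes $m_2<m_1$ and a crease along $\Pi$. This is admissible because $\EH$ extends to continuous positive $f$ with bounded distributional Hessian. Then $\Hess f_{pl}=(m_2-m_1)\delta_\Pi\,v_T\otimes v_T$ is rank one and nonpositive. No eigenvector alignment or control of other directions is needed, and the bulk term is a negative multiple of $\int_{\Pi\cap P}\langle v_T,H_tv_T\rangle\,d\sigma$.

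To see that this integral diverges, use $\langle v,Hv\rangle\ge\langle v,Gv\rangle^{-1}$ for unit $v$ (Cauchy--Schwarz applied to $G^{1/2}v$ and $G^{-1/2}v$) together with Fatou's lemma. The integral tends to $+\infty$ because $\int_{\Pi\cap B_r(x_T)}\lvert x-x_T\rvert^{-(n-1+\alpha)}\,d\sigma=+\infty$ for every $\alpha>0$. The exponent $n-1$ in the hypothesis is precisely the threshold for an $(n-1)$-dimensional integral, and the paper's corollary only assumes the decay along $\Pi$. Since $f_{pl}$ does not depend on $t$, the boundary term and the denominator stay bounded, and $\CRYam(\varphi_t)\le\EH(f_{pl},\varphi_t)\to-\infty$.

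If you prefer smooth test functions, the rescaling must be anisotropic, in the $v_T$ direction only. The function $f_r=C-\sqrt{r^2+\langle x-x_T,v_T\rangle^2}$ is concave; letting first $t\to T$ and then $r\to 0$ gives the same conclusion.
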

Note that the additional assumption for case $2$ is always satisfied if $\dim N = 3$.

\begin{corollary}
    Given any compact toric contact manifold of Reeb type $(N,D,\Torus)$ there exists a $\Torus$-invariant CR structure on $D$ with negative CR Yamabe invariant.    
\end{corollary}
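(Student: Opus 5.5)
The corollary should follow from case~2 of Theorem~\ref{thm:boundary_behaviour}. The idea is to build a path $\varphi_t$ of symplectic potentials that leaves $\mathcal{S}_o(P_{\xi_0},\ell_1,\dots,\ell_d)$ at a finite time $T$. The degeneration should be mild enough that the vanishing-order hypothesis holds. Then $\CRYam(\varphi_t)\to-\infty$ as $t\to T^-$, so $\CRYam^\Torus(J_{\varphi_t})<0$ for $t$ close to $T$. Since the ordinary CR Yamabe invariant is bounded above by the $\Torus$-invariant one, because the infimum \eqref{eq:CRYam_def} is over a larger set, this gives $\mathcal{Y}_{CR}(J_{\varphi_t})<0$ as well.

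For the construction, fix any normalised potential $\varphi_0\in\mathcal{S}_o(P_{\xi_0},\ell_1,\dots,\ell_d)$, for instance Guillemin's canonical one corrected by an affine function so that its minimum is at $o$. Take a smooth function $h$ compactly supported in $P_{\xi_0}^\circ$, and set $\varphi_t=\varphi_0+t h$. This is of the form $t\varphi_\infty+O(1)$ with $\varphi_\infty=h$ and $k=1$. Adding a compactly supported function does not change the prescribed boundary behaviour in \eqref{eqSympPotdefn}. The normalisation (minimum at $o$) is kept by choosing $h$ supported away from $o$ and with $\varphi_0+th$ still minimised at $o$; alternatively, one subtracts the affine part, which does not affect the Hessian. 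Choose $h$ non-convex, so that $\Hess\varphi_0+t\Hess h$ loses positivity at a first time $T<\infty$. Near $\partial P$ nothing happens, so $T$ is determined by an interior condition. Let $T$ be the first time at which $\Hess\varphi_T$ becomes degenerate at some $x_T$ with null vector $v_T$.

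The main obstacle is the vanishing-order hypothesis. It requires $\langle v_T,\Hess\varphi_T v_T\rangle$ to vanish to order $n-1+\alpha$ at $x_T$. For a generic first degeneration this vanishes only quadratically, which suffices when $n-1+\alpha\le 2$, i.e.\ $n\le 2$ with small $\alpha$. For general $n$, I would design $h$ explicitly in local affine coordinates centred at $x_T$, as follows.
\begin{itemize}
\item Pick $h$ so that near $x_T$ we have $\Hess\varphi_0+T\Hess h=\operatorname{diag}(g(x),A(x))$.
\item Take $A>0$ and $g(x)=|x-x_T|^{2m}$ with $2m\ge n$.
\end{itemize}
Concretely, let $u$ be a local function with $u_{11}=c-|x-x_T|^{2m}$, glued by cut-offs. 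Solving for $h$ requires integrating a prescribed Hessian, so it is easiest to work in the reverse direction: prescribe $\varphi_T$ locally as $\varphi_0$ minus a convex function $q$ whose $\partial_1^2$-second derivative at $x_T$ matches $(\varphi_0)_{11}$ to high order. Then set $h=-\chi q$ with a cut-off $\chi$, scaled so that degeneracy first occurs exactly at $x_T$ at $t=T=1$. Positivity elsewhere for $t<1$ needs checking, and the cut-off region is where the care goes.

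Once such an $h$ is built, case~2 applies directly, and the corollary follows by taking $J=J_{\varphi_t}$ with $t$ close to $T$.
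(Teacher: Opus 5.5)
Your reduction is the one the paper intends: the corollary is stated as a direct consequence of case~2 of Theorem~\ref{thm:boundary_behaviour}, with no further argument given. Several of your points are right:
\begin{itemize}
\item a compactly supported $h$ leaves the boundary behaviour in \eqref{eqSympPotdefn} untouched, and the normalisation at $o$ is harmless;
\item $\CRYam\le\CRYam^\Torus$, so negativity transfers to the full invariant;
\item for $n\le 2$ the vanishing hypothesis is automatic, since $\langle v_T,\Hess\varphi_T\,v_T\rangle$ is smooth, nonnegative and zero at $x_T$, hence $O(\abs{x-x_T}^2)$.
\end{itemize}
For $n\ge 3$, however, you do not produce the one thing the corollary needs beyond the theorem, namely a convex limit $\varphi_T$ with the required degeneracy, and the sketch has concrete defects:
\begin{itemize}
\item The model $\operatorname{diag}(\abs{x-x_T}^{2m},A(x))$ is not a Hessian. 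Symmetry of third derivatives forces $\partial_j G_{11}=\partial_1 G_{1j}=0$ for $j\ge 2$, while $\partial_j\abs{x-x_T}^{2m}\neq 0$.
\item What you need is that $\varphi_0-q$ be convex, i.e.\ $\Hess q\le\Hess\varphi_0$. Convexity of $q$ plays no role.
\item With $h=-\chi q$, the terms $\nabla\chi\otimes\nabla q+\nabla q\otimes\nabla\chi+q\Hess\chi$ on a transition annulus of radius $r$ are of the same order as $\Hess\varphi_0$. This is because $q_{11}\approx(\varphi_0)_{11}$ forces $q\sim r^2$ and $\nabla q\sim r$ there. Nothing prevents convexity from failing exactly in the region where you defer the check.
\end{itemize}

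Note first that positivity for $t<T$ needs no separate check. If $\varphi_T=\varphi_0+Th$ is convex, then $\Hess\varphi_t=(1-\tfrac{t}{T})\Hess\varphi_0+\tfrac{t}{T}\Hess\varphi_T>0$. So you only need a convex $\varphi_1$ that agrees with $\varphi_0$ off a compact set and degenerates near some point, and a regularised maximum gives this with no cut-off errors. Concretely:
\begin{itemize}
\item Take a ball $B_\rho(x_T)\Subset P^\circ\setminus\{o\}$ on which $cI\le\Hess\varphi_0\le CI$. Let $\psi$ be the tangent affine function of $\varphi_0$ at $x_T$ plus $\varepsilon$.
\item Then $\psi-\varphi_0\ge\varepsilon/2$ on $B_{\sqrt{\varepsilon/C}}(x_T)$, and $\psi-\varphi_0\le-\varepsilon$ for $2\sqrt{\varepsilon/c}\le\abs{x-x_T}<\rho$.
\item For small $\varepsilon$, take a regularised maximum $\widetilde{\max}_{\varepsilon/2}(\varphi_0,\psi)$: smooth, convex, nondecreasing in each argument, and equal to $\max$ when the arguments differ by at least $\varepsilon/2$. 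Extended by $\varphi_0$ outside $B_\rho(x_T)$, this is a smooth convex $\varphi_1$ with $\varphi_1=\psi$ near $x_T$.
\item With $h=\varphi_1-\varphi_0$, one gets $\langle v,\Hess\varphi_t\,v\rangle=(1-t)\langle v,\Hess\varphi_0\,v\rangle\to 0$ uniformly on a fixed ball around $x_T$, for every unit $v$.
\end{itemize}
The hypothesis of case~2 then holds trivially, since the quantity vanishes identically near $x_T$ at $T=1$. In fact Lemma~\ref{lemma:Gv_small} applies directly, with fixed $r$ and a fixed piecewise-linear test function, giving $\EH\to-\infty$ in every dimension.
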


In particular, considering that constant scalar curvature Sasaki toric manifolds always have positive scalar curvature, we find a positive answer to a question posed in \cite{SungTakeuchi_CRYam}: we can construct (toric) pseudohermitian manifolds of arbitrary dimension $\geq 3$ such that the CR Yamabe invariant takes both positive and negative values on different (\textit{a posteriori}, non-equivalent) CR structures $J$ for the same contact manifold $(N,D)$. Note that any compact \emph{transversally Fano} (i.e.\ $c_1(D^{1,0})=0$ and, for one, equivalently any, Sasaki-Reeb vector field $\xi$, $c_1^{\xi}>0$) toric manifold admits a Sasaki-Einstein metric by Futaki-Ono-Wang \cite{FutakiOnoWang}. Transversally Fano Sasaki structures manifolds include all the Sasaki structures on the $U(1)$-principal bundle over Fano manifolds.       

\begin{corollary}
    Given any compact toric contact manifold of Reeb type $(N,D,\Torus)$ that is transversally Fano, there exist $\Torus$-invariant CR structures on $D$ with negative, zero and positive CR Yamabe invariants.    
\end{corollary}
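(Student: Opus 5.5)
The plan is to produce the three signs separately: positive from a Sasaki--Einstein structure, negative from the previous corollary, and zero by connecting these two along a path of symplectic potentials and using a continuity argument. Throughout, I identify $\Torus$-invariant CR structures on $D$ with normalised symplectic potentials $\varphi\in\mathcal{S}_o(P_{\xi_0},\ell_1,\dots,\ell_d)$ via Proposition~\ref{prop:toricSasaki_setup}. The contact form $\eta_0$ stays fixed and only $J_\varphi$ varies.

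\emph{Positive.} The transversally Fano hypothesis and Futaki--Ono--Wang \cite{FutakiOnoWang} give a toric Sasaki--Einstein structure on $(N,D)$. Its Reeb field $\xi_{SE}$ lies in the Sasaki--Reeb cone, but in general $\xi_{SE}\neq\xi_0$. Its CR structure $J_{SE}$ is still $\Torus$-invariant and compatible with $D$, and it is realised as $J_{\varphi_+}$ for some $\varphi_+\in\mathcal{S}_o(P_{\xi_0},\dots)$. This uses the equivalence between Reeb fields recalled in Section~\ref{sec:toric}: $\eta_{SE}=f^{-1}\eta_0$ for a $\Torus$-invariant $f>0$, namely $f=\eta_0(\xi_{SE})$. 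The form $\eta_{SE}$ is cscTW with constant $\ScalTW>0$, since the transversal scalar curvature of a Sasaki--Einstein structure is a positive constant. By the Jerison--Lee theory \cite{JerisonLee_CRYamabe}, the sign of $\CRYam$, and likewise of $\CRYam^\Torus$, equals the sign of the first eigenvalue $\lambda_1(J)$ of the conformal sublaplacian $L_J=b_n\Delta_b+\ScalTW$, computed for any contact form in the conformal class. If some form has constant $\ScalTW$, this sign is the sign of that constant. Hence $\CRYam(J_{\varphi_+})>0$.

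\emph{Negative.} The previous corollary, via case~1 of Theorem~\ref{thm:boundary_behaviour}, gives $\varphi_-\in\mathcal{S}_o$ with $\CRYam(J_{\varphi_-})<0$.

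\emph{Zero.} I would consider $\varphi_s=(1-s)\varphi_++s\varphi_-$ for $s\in[0,1]$. This path stays in $\mathcal{S}_o$ for three reasons:
\begin{itemize}
\item the set of strictly convex functions with the Guillemin boundary behaviour \eqref{eqSympPotdefn} is convex, because the singular parts $\tfrac12\sum\ell_j\log\ell_j$ agree and the smooth remainders combine linearly;
\item a convex combination of functions each minimised at $o$ is still minimised at $o$;
\item the inverse Hessian of $\varphi_s$, and hence $J_{\varphi_s}$, varies smoothly in $s$ on all of $N$, using the standard boundary regularity of $\varphi^{ij}$.
\end{itemize}
Consequently the coefficients of $L_{J_{\varphi_s}}$, computed with respect to the fixed form $\eta_0$ and the fixed volume form $\eta_0\wedge(d\eta_0)^n$, depend continuously on $s$ in $C^0$. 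Then $\lambda_1(s)=\inf_u\frac{\int u L_s u}{\int u^2}$ is continuous in $s$, because the quadratic forms are uniformly comparable. Since $\lambda_1(0)>0>\lambda_1(1)$, the intermediate value theorem gives $s_0$ with $\lambda_1(s_0)=0$. The first eigenfunction $u>0$ is unique up to scale, hence $\Torus$-invariant. The form $u^{2/n}\eta_0$ then has $\ScalTW\equiv0$, so $\CRYam(J_{\varphi_{s_0}})=\CRYam^\Torus(J_{\varphi_{s_0}})=0$.

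The main obstacle is making the sign dictionary ``$\operatorname{sign}\CRYam=\operatorname{sign}\lambda_1$'' rigorous, together with continuity in $J$ up to the boundary strata of the moment map. For the sign statement I would use the elementary Jerison--Lee argument: for $\lambda_1$ of each sign, test with the first eigenfunction and bound the Rayleigh quotient by H\"older. This works even when the infimum is not attained, so $n=1$ poses no difficulty. For continuity, I would express $J_\varphi$ in the action--angle coordinates of Proposition~\ref{prop:toricSasaki_setup}. I would then check that $\varphi^{ij}$ extends smoothly to $P_{\xi_0}$, with the linear dependence on $s$ surviving inversion, so that $J_{\varphi_s}$ is a continuous family of smooth tensors on $N$.
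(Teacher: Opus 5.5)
Your proposal is correct and follows the route the paper has in mind; the paper only sketches it in the surrounding remarks: positivity from the Futaki--Ono--Wang Sasaki--Einstein structure, negativity from the preceding corollary, and zero by an intermediate value argument along the convex segment $(1-s)\varphi_++s\varphi_-$ in $\mathcal{S}_o$, which you make precise through the sign of the first eigenvalue of the conformal sublaplacian. Two small corrections: the negative structure comes from case~2 of Theorem~\ref{thm:boundary_behaviour} (where $\CRYam\to-\infty$), not case~1, which only gives a non-positive limit; and the implication $\lambda_1>0\Rightarrow\CRYam>0$ needs the Folland--Stein Sobolev inequality rather than H\"older, since $\|u\|_{L^2}$ does not control $\|u\|_{L^{2+2/n}}$.
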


The crucial ingredient in the proof of Theorem~\ref{thm:boundary_behaviour} is the following expression for the CR Einstein--Hilbert functional evaluated on $\Torus$-invariant pseudohermitian structures:
\begin{equation}\label{eq:EH_intro}
    \EH(f^{-1}\eta_0,J_\varphi) = \frac{2\int_{\partial P} f^{-n} d\sigma + n\int_P f^{-n-1}\,\Tr\left(H_\varphi\,\Hess f\right) dx}{\left(\int_P f^{-n-1} d\mu\right)^{\frac{n}{n+1}}},
\end{equation}
where $dx$ is the Lebesgue measure on $P=P_{\xi_0}$ and $d\sigma$ is a certain positive measure on the boundary of $P$ (see Section~\ref{sec:toric_yamabe} below for more details).

\smallskip

Theorem~\ref{thm:boundary_behaviour} suggests that it might be possible to compute the $\Torus$-invariant \emph{CR Yamabe invariant}
\begin{equation}
    \CRYam^\Torus(P_{\xi_0})\coloneqq\sup_{\varphi\in\mathcal{S}_o(P_{\xi_0},\ell_1,\dots,\ell_d)}\CRYam^\Torus(\varphi)
\end{equation}
by taking the supremum over a \emph{bounded} subset of $\mathcal{S}_o(P_{\xi_0},\ell_1,\dots,\ell_d)$, in $\mathcal{C}^0$-norm. As we recalled above, this supremum is in turn closely tied with the K-stability of the toric Sasaki manifold \cite{LahdiliLegendreScarpa_CRYam}.

\smallskip

Note however that there are at least two types of ``boundary points'' of the space of symplectic potentials $\mathcal{S}_o(P_{\xi_0},\ell_1,\dots,\ell_d)$ that are not included by Theorem~\ref{thm:boundary_behaviour}: first, $\varphi_\infty$ might be only \emph{weakly} convex. Second, one should allow the possibility of $\varphi_\infty$ not being smooth: indeed, the important class of \emph{geodesic rays} in $\mathcal{S}(P_{\xi_0},\ell_1,\dots,\ell_d)$ is given by piecewise-linear convex functions, see Section~\ref{sec:action}.

In particular, it seems crucial to extend the CR Yamabe energy to a completion of $\mathcal{S}_o(P_{\xi_0},\ell_1,\dots,\ell_d)$ that includes less regular convex functions. We start investigating this direction in Section~\ref{sec:action}, in which we show how to extend the CR Einstein--Hilbert functional on some singular pseudohermitian structures, following \cite{LahdiliLegendreScarpa_EHDF,LahdiliLegendreScarpa_CRYam}.
\begin{proposition}
    For any convex, piecewise-linear function $\varphi_\infty$, any affine-linear function $\ell$ and constant $a$ such that $\ell+a\varphi_\infty>0$,
    \begin{equation}\label{eq:EH_intro_ext}
        \EH(\ell+a\varphi_\infty , \varphi_P + \varphi_\infty) = \frac{2\int_{\partial P}(\ell+a\varphi_\infty)^{-n}d\sigma}{\left(\int_P(\ell+a\varphi_\infty)^{-n-1}dx\right)^{\frac{n}{n+1}}}.
    \end{equation}
\end{proposition}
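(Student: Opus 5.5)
The plan is to start from formula \eqref{eq:EH_intro} for smooth data. I will read $\EH(f,\varphi)$ as shorthand for $\EH(f^{-1}\eta_0,J_\varphi)$, and then extend it to the singular pair $(\ell+a\varphi_\infty,\varphi_P+\varphi_\infty)$ by approximation. Take a smoothing $\varphi_\infty^\epsilon$ of the convex piecewise-linear function $\varphi_\infty$; for instance a mollification, which is convex, smooth, and converges uniformly to $\varphi_\infty$. Set
\begin{equation}
 f_\epsilon=\ell+a\varphi_\infty^\epsilon, \qquad \varphi_\epsilon=\varphi_P+\varphi_\infty^\epsilon .
\end{equation}
Here $\varphi_P$ is the Guillemin potential. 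Adding a smooth convex function does not change the boundary behaviour, so $\varphi_\epsilon$ is a symplectic potential, up to normalisation. The extension of $\EH$ in Section~\ref{sec:action} should coincide with $\lim_{\epsilon\to0}\EH(f_\epsilon,\varphi_\epsilon)$, or be defined that way, following \cite{LahdiliLegendreScarpa_EHDF}. So the task is to compute this limit.

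The boundary term and the volume term are straightforward. Because $f_\epsilon\to \ell+a\varphi_\infty$ uniformly and stays positive, we get $\int_{\partial P} f_\epsilon^{-n}d\sigma\to\int_{\partial P}(\ell+a\varphi_\infty)^{-n}d\sigma$, and likewise for $\int_P f_\epsilon^{-n-1}dx$. Since the measure $d\mu$ in the denominator is Lebesgue measure in this setting, this term gives the denominator of \eqref{eq:EH_intro_ext}.

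The key term is
\begin{equation}
 n\int_P f_\epsilon^{-n-1}\Tr(H_{\varphi_\epsilon}\Hess f_\epsilon)\,dx
 = n a\int_P f_\epsilon^{-n-1}\Tr\big((\Hess\varphi_P+\Hess\varphi_\infty^\epsilon)^{-1}\Hess\varphi_\infty^\epsilon\big)dx ,
\end{equation}
using that $\Hess f_\epsilon=a\Hess\varphi_\infty^\epsilon$ because $\ell$ is affine. I must show this tends to $0$. Away from the codimension-one break locus of $\varphi_\infty$, the Hessian $\Hess\varphi_\infty^\epsilon$ is zero once $\epsilon$ is small. Near a break hyperplane, of width about $\epsilon$, the Hessian is roughly $\epsilon^{-1}\,\nu\otimes\nu$ times the jump of the slope, where $\nu$ is the normal to the hyperplane. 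Write $A=\Hess\varphi_P$, which is positive definite and bounded in the interior, and $B=\Hess\varphi_\infty^\epsilon\ge0$. The trace then satisfies
\begin{equation}
 \Tr((A+B)^{-1}B)=\Tr\big(B^{1/2}(A+B)^{-1}B^{1/2}\big)\le \operatorname{rank}B\le n ,
\end{equation}
so the integrand is uniformly bounded and supported on a set of measure $O(\epsilon)$ in the interior. This gives the limit $0$ there. Since $f_\epsilon^{-n-1}$ is uniformly bounded, the whole middle term vanishes in the limit, and \eqref{eq:EH_intro_ext} follows.

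The main obstacle is the neighbourhood of $\partial P$, where $\Hess\varphi_P$ degenerates to infinity and the break hyperplanes of $\varphi_\infty$ meet the facets. The trace bound $\le n$ still holds there, since it only uses $A\ge0$, and the strips still have measure $O(\epsilon)$. So the real issue is justifying the formula \eqref{eq:EH_intro} for the approximants, including the integration by parts that produced the boundary term. If the extension in Section~\ref{sec:action} is defined intrinsically rather than as a limit, I would check that it agrees with this limit. The way to do that is to use the weak formulation, in which the distributional Hessian of $\varphi_\infty$ is a measure supported on the break walls and the pairing with $H_\varphi$ tends to zero.
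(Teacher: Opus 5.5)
Your argument is correct, but it reaches the formula by a different route from the paper's.

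\textbf{The paper's route.} The paper works along the geodesic ribbon $(\ell_\chi-s\varphi_\infty,\ \varphi_0+t\varphi_\infty)$. The identity \eqref{eq:hessian_condition}, $\Hess f_s=-s\,\partial_t\Hess\varphi_t$, turns the interior term of $\Scaltot$ into the $t$-derivative of $-ns\int_P f_s^{-n-1}\log\det\Hess\varphi_t\,dx$. This log-determinant is then read as the logarithm of the Radon--Nikodym derivative of the Monge--Amp\`ere measure. The piecewise-linear part only affects the singular part of that measure, so the action \eqref{eq:act_func_geodribbon} is affine in $t$ with slope $2\int_{\partial P}f_s^{-n}d\sigma$. $\EH$ on the singular pair is then \emph{defined} as this slope divided by the volume factor.

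\textbf{Your route.} You regularise in space, keeping the same coupling $\Hess f_\epsilon=a\Hess\varphi_\infty^\epsilon$, and apply \eqref{toricEH} to genuinely smooth data. You then kill the interior term with two facts: the bound $0\le\Tr\bigl((A+B)^{-1}B\bigr)\le n$, and the observation that $\Hess\varphi_\infty^\epsilon$ is supported in an $O(\epsilon)$-neighbourhood of the creases.

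\textbf{What each buys.} The paper's route ties the extension to geodesic rays and test configurations. The formula holds for every $t$ along the ray, and this is what feeds the Donaldson--Futaki expansion. Yours gives a quantitative statement that the extended value is an honest limit of $\EH$ on smooth $\Torus$-invariant pseudohermitian structures. It also sidesteps the measure-theoretic step of taking logarithms of, and decomposing, the Monge--Amp\`ere measure of $\varphi_0+t\varphi_\infty$.

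\textbf{Two remarks.} First, the ``main obstacle'' you flag is not one. The pair $(\varphi_0+\varphi_\infty^\epsilon,f_\epsilon)$ lies in $\mathcal{S}(P)\times C^\infty(P,\mathbb{R}_{>0})$, so \eqref{toricEH} applies verbatim. Moreover $H_{\varphi_\epsilon}$ extends smoothly to $P$, so your trace bound holds up to $\partial P$. Second, you should say explicitly that using the \emph{same} mollifier in $f_\epsilon$ and $\varphi_\epsilon$ is essential; it is your version of \eqref{eq:hessian_condition}. A crease in $f$ that is not matched by a crease in $\varphi$ produces a nonzero interior term, as in case 2 of the proof of Theorem~\ref{thm:boundary_behaviour}.
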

The functional appearing on the right-hand side of \eqref{eq:EH_intro_ext} seems to be of special importance to understand the CR Yamabe invariant of the toric manifold, and is related to its K-stability. We refer to Section~\ref{sec:action} for more details and a few open questions in this direction.

\smallskip

In Section~\ref{sec:toric_yamabe} we also show an interesting result about cscS structures: they must lie at the boundary of the convex hull of cscTW structures.
\begin{proposition}
    Assume that $(N,D,J,\eta)$ is a toric cscS manifold and let $f_1, \dots, f_d$ be smooth $\Torus$-invariant positive functions such that $(N,D,J,f_i^{-1}\eta)$ is cscTW. Then, there exists no positive numbers $(\lambda_1, \dots, \lambda_d)$ such that $\sum_{i=1}^d \lambda_if_i =1$.
\end{proposition}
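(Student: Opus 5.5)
Here is the plan. If $\eta$ is cscS, then by Theorem~\ref{theocritEH=cscS} the pair $(J,\eta)$, i.e.\ $(\varphi,f\equiv 1)$, is a critical point of $\EH$ on the whole space of $\Torus$-invariant pseudohermitian structures. The cscTW forms conformal to $\eta$ are exactly the critical points of $\EH(\cdot\,,J_\varphi)$ restricted to the conformal class. The key tool is the toric expression \eqref{eq:EH_intro}. Its numerator is \emph{linear in $\varphi$}, or more precisely in $H_\varphi=(\varphi^{ij})$, with $f$ fixed. The first step is to differentiate \eqref{eq:EH_intro} in the direction of the symplectic potential at an arbitrary $f$. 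Since $\dot H_\varphi = -H_\varphi(\Hess\dot\varphi)H_\varphi$, for $\dot\varphi$ smooth on $P$ (which preserves the boundary conditions in $\mathcal{S}_o$) the variation is
\begin{equation}
    \delta_\varphi\EH(f^{-1}\eta,J_\varphi)[\dot\varphi] = -\frac{n\int_P f^{-n-1}\Tr\left(H_\varphi\Hess\dot\varphi\, H_\varphi \Hess f\right)dx}{\left(\int_P f^{-n-1}dx\right)^{\frac{n}{n+1}}}.
\end{equation}

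The second step evaluates this at each cscTW solution $f_i$, and here the main idea enters. For a cscTW structure the Tanaka--Webster scalar curvature is the constant $C_i$, and by the Jerison--Lee theory $f_i$ minimizes, or at least is critical for, the conformal direction. I would therefore look for a linear quantity in the $f$'s that must vanish. A cleaner route is the following. Consider the linearization at $f\equiv1$ of the cscTW equation \eqref{cscTWeqIntro}. Alternatively, note that since $\eta$ is cscS, the $\varphi$-variation of $\EH(\eta,J_\varphi)$ vanishes, and the trace term with $\Hess f$ is absent when $f\equiv 1$.

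I would instead exploit convexity directly. Set $F=\sum\lambda_i f_i\equiv 1$ with $\lambda_i>0$. Multiply the equation \eqref{cscTWeqIntro} for each $f_i$ by a suitable weight and integrate over $P$ against test functions; integration by parts produces the boundary measure $d\sigma$. Combine the resulting identities so that the left-hand sides sum to the Euler--Lagrange equation for $\varphi$ at $f\equiv1$. That Euler--Lagrange equation is the Abreu-type cscS equation $-(\varphi^{ij})_{,ij}=$ const, which holds because $\eta$ is cscS. The goal is to derive the identity
\begin{equation}
    \sum_i \lambda_i \int_P f_i^{-n-2}\,\Tr(H_\varphi \Hess f_i)\, g\,dx = 0
\end{equation}
for a positive weight $g$. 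Then I would use $\sum\lambda_i \Hess f_i=\Hess 1=0$ together with the positivity and strict convexity of $t\mapsto t^{-n-2}$. The aim is to show that this forces every $\Hess f_i=0$, so that $f_i$ is affine-linear.

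The final step treats affine $f_i$. Since $\eta$ is cscS, the $f_i$ yielding cscTW are exactly the affine $f_i=\ell_i>0$ whose twisted structures are cscS, i.e.\ a change of Reeb field within the Sasaki cone. For an affine $f$ the right-hand side of \eqref{cscTWeqIntro} reduces to $-C f^{-n-2}$. The normalisation of the boundary measure (the constants in \eqref{eq:EH_intro}) then pins down the affine function, giving $f_i\equiv1$ up to scale, since $\EH$ is scale-invariant. Here a Reeb-field uniqueness argument such as Martelli--Sparks--Yau or \cite{BHLTF20} may be needed. Distinct scalings of $1$ summing to $1$ should be excluded by the normalisation $\int_P f^{-n-1}$ implicit in fixing $C$. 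Otherwise the statement would have to concern the nontrivial $f_i$ only, and I would check the intended normalisation.

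I expect the main obstacle to be the second step: producing a genuinely sign-definite identity from the $d$ nonlinear equations using only the linear relation $\sum\lambda_i f_i=1$. The nonlinearity in $f$ (powers $-n-1$ and $-n-2$) prevents simple summation. I would try first the substitution $u_i=f_i^{-n}$ (or $f_i^{-1}$), in which the numerator of \eqref{eq:EH_intro} may become a convex functional. Jensen's inequality applied to $\EH$ along the convex combination, compared with the criticality of $f\equiv1$, would then give the contradiction.
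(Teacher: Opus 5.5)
Your proposal has a genuine gap. The central step (your second step) is never carried out, and the mechanism you suggest for it would not work. Suppose you had an identity $\sum_i\lambda_i\int_P f_i^{-n-2}\Tr(H_\varphi\Hess f_i)\,g\,dx=0$ together with $\sum_i\lambda_i\Hess f_i=0$. This still does not force $\Hess f_i=0$: the weights $f_i^{-n-2}$ differ from one $i$ to the next, and $\Tr(H_\varphi\Hess f_i)$ has no sign. So convexity of $t\mapsto t^{-n-2}$ gives nothing, and neither does a Jensen argument on $\EH$, which is not convex in $f$. The obstruction you identify, namely that the powers $f^{-n-1}$ and $f^{-n-2}$ prevent summation, comes from working with the weighted form \eqref{cscTWeqIntro}. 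The missing idea is to use the unweighted pointwise equation \eqref{eq:scalTW0}. Since $\eta$ is cscS, $\ScalTW(\varphi,1)=s$ is constant, and each $f_i$ satisfies
\[
 C_i = s f_i - 2(n+1)\Delta^{g_\varphi} f_i - (n+1)(n+2) f_i^{-1}\abs{df_i}^2_{g_\varphi}.
\]
This is affine in $f_i$ except for one sign-definite term. Multiply by $\lambda_i$ and sum. The relation $\sum_i\lambda_i f_i=1$ kills the Laplacian term pointwise, leaving
\[
 (n+1)(n+2)\sum_i\lambda_i f_i^{-1}\abs{df_i}^2_{g_\varphi}=s-\sum_i\lambda_iC_i ,
\]
a constant. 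Over a vertex of $P$ one has $H_\varphi=0$ (equivalently $d\mu=0$), so every $\Torus$-invariant function is critical there. Hence the constant is $0$, and $df_i\equiv 0$ for every $i$. No integration by parts is needed. Neither is a $\varphi$-variation of $\EH$, which is beside the point here since $J$ is fixed.

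Your final step would also fail on its own terms. Knowing that the $f_i$ are affine does not let you conclude they are constant through uniqueness of cscS Reeb fields, because that uniqueness is false for toric cscS structures. In the $p>5q$ examples on $\bbc\bbp^1\times\bbc\bbp^1$ recalled in \S\ref{sec:trivialHirzebruch}, a single $J$ carries two cscS forms $f_\pm^{-1}\eta$. Relative to the cscS form $f_+^{-1}\eta$, the factor $f_-/f_+$ is then a nonconstant Killing potential, hence affine on the corresponding polytope, and yields a cscTW (indeed cscS) form. The pointwise argument above bypasses this and gives directly that all $f_i$ are constant. You were right that the statement as literally written fails for constant factors (e.g.\ $f_1\equiv 1$). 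What the argument actually proves is that a positive combination of the $f_i$ summing to $1$ forces every $f_i$ to be constant.
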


Finally, in Section~\ref{sec:examples} we show several examples of calculations of the CR Yamabe energy on toric manifolds and other manifolds of high symmetry, on which one can simplify the cscTW problem by imposing an appropriate ansatz. In view of our examples, we propose the following
\begin{question}\label{q:uniqueness}
     Let $(D,J,\eta, \chi)$ be a $\Torus$-invariant cscS structure with $\chi \in \kt$, such that $\EH(\eta,J)=\EHmin$. Is~$\eta$ a $\Torus$-invariant CR Yamabe minimiser in its conformal class $[\eta]^\Torus$? If so, is it the unique $\Torus$-invariant CR Yamabe minimiser in the conformal class?
\end{question}
Of course, it might be more interesting for this question to consider the case when $\Torus\subset\Aut(N,D,J,\eta,\chi)$ is \emph{maximal}, e.g.\ the toric case.

A positive answer to this question would constitute an extension of the Obata-type uniqueness result on CR manifolds (see \cite{JerisonLee_extremalsCRYam,LiWang_ObataCR}) beyond the Sasaki-Einstein case. In \S\ref{sec:trivialHirzebruch} and \S\ref{sec:nontoric}, we show in several examples that, without the assumption $\EH(\eta)=\EHmin$, the answer would be negative.

For a full summary of Section~\ref{sec:examples}, we refer the reader to the beginning of that section, but here we would like to highlight a couple of additional items:
\begin{itemize}
\item In \S\ref{sec:trivialHirzebruch} 
we construct some new explicit non-Sasaki cscTW examples over the trivial Hirzebruch surface, $\bbc\bbp^1\times \bbc\bbp^1$, that are not CR Yamabe minimizers.
\item In \S\ref{sec:CP1} and \S\ref{sec:proj_bundle}  we exhibit cases of  $\CRYam^{\bbt}(\varphi_a)$ becoming negative for explicit deformations, $\varphi_a$, of some standard potentials (or rather the inverse of their Hessians).
\end{itemize}

\subsection{Relation to other works and open problems}\label{sec:inv_yamabe}

An equivariant version of the classical Riemannian invariant has been first considered by Bergery \cite{Bergery_equiv}. Hebey and Vaugon \cite{HebeyVaugon_equiv} approached the equivariant Yamabe problem by conjecturing an extension of the well-known Aubin's criterion \cite{Aubin_yamabe-est} for the existence of minima of the Einstein--Hilbert action, and establishing this conjecture in several cases. We refer to \cite{Madani_equivconj1,Madani_equivconj2} for an overview of the equivariant Riemannian Yamabe problem and the Hebey--Vaugon conjecture. It is interesting to remark that in some cases, the equivariant Yamabe invariant is conjectured to coincide with the Yamabe invariant: see \cite{S1_Yamabe_equiv} for the case of the $U(1)$-Yamabe invariant on $S^3$.

For the \emph{CR} Yamabe problem, at version of the Hebey--Vaugon conjecture has been proposed in \cite{Ho_CRYam-equiv}. We refer to \cite{Afeltra_equiv} for some results on the equivariant CR Yamabe problem and an overview of the literature.

The main result of \cite{Ho_CRYam-equiv} can be stated as follows: if $G$ is a compact subgroup of CR automorphisms, denote by
\begin{equation}
    \Lambda_G\coloneqq \inf_{x\in N}\operatorname{card}\left\lbrace g.x \mid g\in G \right\rbrace.
\end{equation}
Then,
\(
    \CRYam^G(\eta) \leq \Lambda_G \, \CRYam(\mathbb{S}^{2n+1}),
\)
and if the inequality is strict, then there exists a solution of the $G$-equivariant CR Yamabe problem.

As in our toric case every orbit is positive-dimensional, it immediately follows that the toric CR Yamabe problem has a solution. This can also be seen as a by-product of the solution of the Yamabe problem on K-contact manifolds \cite{Zhang_Kcontact}, as we mentioned above. Even though we have this general existence result, it seems that even \emph{estimating} the CR Yamabe invariant is extremely complicated. To the best of our knowledge, Theorem~\ref{thm:boundary_behaviour} is the first result in this direction, and most questions remain open. For example, it is natural to wonder if $\CRYam^{\Torus}(\eta) = \CRYam(\eta)$, when $\eta$ is a $\Torus$-invariant Sasaki form.

\smallskip

Finally, we should highlight that, while we are chiefly interested in the existence of constant scalar curvature Sasaki structures, it would be interesting to see how to adapt our arguments to the more general class of $(p,q)$-Einstein-Hilbert functionals introduced in \cite{LahdiliLegendreScarpa_EHDF}, and to the classical Einstein--Hilbert functional in particular.

\subsection{Acknowledgments}

The idea for this paper emerged during the workshop ``Geometric Analysis in Lyon 2025'', and was in particular inspired by the mini-course on the CR Yamabe problem by Andrea Malchiodi. The authors would like to thank him for his interest in our paper and ideas for future directions. 

This work has benefited from a visit of C.T.-F.\ to the Université Lyon 1 covered by the Research and Innovation framework programme Horizon Europe {\normalsize\euflag} under grant agreement 10114932.

\section{Toric Sasaki manifolds}\label{sec:toric}

\subsection{Compact toric contact manifolds, a quick review}\label{ss:Review}

The classification of contact toric manifolds via their \emph{momentum cones} was achieved by Lerman~\cite{Lerman:contactToric}, incorporating the works of \cite{BanyagaMolino1} and \cite{BG:contactNOTE}, into a general statement and building on the compact toric symplectic orbifolds classification of \cite{LermanTolman}. In the case we are interested in, that is when there is a compatible toric Sasaki structure, the underlying contact toric manifold is necessarily of \emph{Reeb type}, in the sense that the Lie algebra of the torus contains a Reeb vector field.  Under this condition, the classification of Lerman is simplified and the momentum cone is a strictly convex cone (i.e.\ it lies in a half space) which determines the contact manifold uniquely \cite{BG:contactNOTE}. We give now a little more details, and introduce notation, on this theory.

\begin{definition}
    A compact \emph{contact toric manifold of Reeb type} $(N,D,\Torus)$ (of dimension $2n+1$) is a contact manifold $(N,D)$ of dimension $2n+1$, together with an effective contact action of a torus $\Torus$ of dimension $n+1$ whose Lie algebra $\kt$ contains a vector field $\xi \in \kt$ such that $\xi_x\notin D_x$, $\forall x\in N$.
\end{definition}
In other words, $\Torus$ is a subgroup of the group of contactormorphisms $\mbox{Con}(D)$ and the Reeb type condition implies that $(N,D)$ is co-oriented and the cone of Reeb field $\mbox{con}_+(D)$ meets $\kt$ non trivially. The \emph{Sasaki-Reeb cone}, relative to $\Torus$, is the polyhedral convex cone $\Reebcone\coloneqq\mbox{con}_+(D)\cap\kt$.

Recall that the symplectization, say $(D^0_+,\hat{\omega})$, of a co-oriented contact manifold is one of the two components of $D^0 \backslash N \subset T^*N$, where $D^0$ is the annihilator of $D$ in $T^*N$ together with the restriction $\hat{\omega}$ of the Liouville symplectic form on $T^*N$. It is a well-known fact that $(N,D)$ is contact and co-oriented if and only if $(D^0_+,\hat{\omega})$ is symplectic and that both objects determine the other uniquely. Therefore, a contact toric manifold corresponds to a (unique) \emph{toric symplectic cone}. The fact that it is a cone, in the sense that $\calL_{V} \hat{\omega}= 2\hat{\omega}$ for $V$, the radial vector field induced by the fiberwise $\bbr_+$-action on $D^0_+$, implies that $\hat{\omega}$ is exact (using the Cartan formula) and, thus, the naturally induced torus action on $D^0_+$ admits an equivariant momentum map $\hat{\mu}: D^0_+\ra \kt^*$, uniquely determined by the condition $\calL_{V} \hat{\mu} = 2\hat{\mu}$. The \emph{moment cone} of $(N,D,\Torus)$ is then the image of $\hat{\mu}$ in $\kt^*$, that we denote by $\calC\coloneqq\hat{\mu}(D^0_+)$. 

\begin{theorem}[\cite{Lerman:contactToric,BanyagaMolino1,BG:contactNOTE}]
    The moment cone of a compact contact toric manifold of Reeb type is a strictly convex cone in $\kt^*$ that does not contain $0$. Moreover, any two compact contact toric $\Torus$-manifolds with the same moment cone are $\Torus$-equivariantly contactomorphic.   
\end{theorem}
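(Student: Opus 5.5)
The plan is to reduce everything to the symplectic cone $(D^0_+,\hat\omega)$ and its momentum map $\hat\mu$, and then to a quasi-regular quotient, where the Lerman--Tolman classification of compact symplectic toric orbifolds is available.

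\textbf{Half-space and $0\notin\calC$.} The lifted action of $\Torus$ on $T^*N$ has momentum map $p\mapsto \big(X\mapsto p(X_N)\big)$, where $X_N$ is the fundamental vector field of $X\in\kt$. This map is homogeneous of degree one in the fibres, so up to the normalisation $\calL_V\hat\mu=2\hat\mu$ its restriction to $D^0_+$ is $\hat\mu$. Fix $\xi\in\kt$ with $\xi_x\notin D_x$ for all $x\in N$. Every $p\in D^0_x\setminus\{0\}$ satisfies $p(\xi_x)\neq 0$. Since $D^0_+$ is connected, after choosing the correct component we get $\langle\hat\mu,\xi\rangle>0$ on $D^0_+$. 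Hence $\calC$ lies in the open half-space $\{\langle\cdot,\xi\rangle>0\}$, and in particular $0\notin\calC$. It also follows that $\calC$ contains no line, so it is strictly convex once convexity is established. Because $\hat\mu$ is $\bbr_+$-equivariant, $\calC$ is the cone over the slice $\Delta_\xi\coloneqq\calC\cap\{\langle\cdot,\xi\rangle=1\}$, which is the image of the contact momentum map of the contact form $\eta_\xi$ with $\eta_\xi(\xi)=1$.

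\textbf{Convexity.} The Reeb-type condition is open in $\kt$, so we may replace $\xi$ by a rational element $\xi'$ of $\Reebcone$ generating a circle $S^1_{\xi'}\subset\Torus$. Then $N\to N/S^1_{\xi'}$ is a principal $S^1$-orbibundle over a compact symplectic orbifold $(B,\omega_B)$, with $d\eta_{\xi'}$ equal to the pullback of $\omega_B$ (Boothby--Wang / orbifold version). The quotient torus $\Torus/S^1_{\xi'}$ acts effectively and Hamiltonianly on $B$, and $\dim B = 2n$, so $B$ is a compact symplectic toric orbifold. By Atiyah--Guillemin--Sternberg, extended to orbifolds by Lerman--Tolman, its momentum image is a convex rational polytope. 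That image is $\Delta_{\xi'}$, viewed in the affine hyperplane $\{\langle\cdot,\xi'\rangle=1\}$. Therefore $\calC=\bbr_+\Delta_{\xi'}$ is a convex polyhedral cone, and it is strictly convex by the previous step.

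\textbf{Uniqueness.} Let $N_1,N_2$ be two such manifolds with the same cone $\calC$. Choose a common rational $\xi'$; the Reeb cone is determined by $\calC$ as its dual interior. Then the quotients $B_1,B_2$ have the same polytope $\Delta_{\xi'}$. The orbifold labels, i.e.\ the structure groups along facets, are read off from the primitive inward normals of the facets of $\calC$ relative to the lattice of $\Torus$. By Lerman--Tolman, $B_1$ and $B_2$ are equivariantly symplectomorphic.

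It remains to lift this symplectomorphism to an equivariant contactomorphism $N_1\cong N_2$. I expect this to be the main obstacle: one has to show that the $S^1$-orbibundle with connection of curvature $\omega_B$ is determined by the toric data, and not merely that its curvature class is. I would first try to work directly with the cones $D^0_{+,i}$ instead of the quotients. Over the interior of $\calC$ both cones are $\Torus\times\calC^\circ$ in action-angle coordinates. Near each face one has Lerman's equivariant local normal forms, which are determined by $\calC$ and the lattice. Local $\Torus$-equivariant $\bbr_+$-homogeneous symplectomorphisms over open subsets of $\calC$ then form a sheaf, and the global obstruction to gluing lies in a cohomology group of this sheaf over the contractible set $\calC$, which vanishes. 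The resulting homogeneous equivariant symplectomorphism $D^0_{+,1}\cong D^0_{+,2}$ descends to an equivariant contactomorphism of the bases $N_1\cong N_2$.
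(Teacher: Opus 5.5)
The paper gives no proof of this statement; it only cites Lerman, Banyaga--Molino and Boyer--Galicki, and your outline follows those sources. Your half-space and convexity steps are essentially the Boyer--Galicki route (pass to a quasi-regular quotient, then apply Lerman--Tolman), and they are fine with one fix. Lying in an open half-space does not by itself exclude lines: an open half-plane is a counterexample. Strict convexity comes instead from compactness of $\Delta_\xi=\mu_{\eta_\xi}(N)$, which you do have.

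You are right that $B_1\cong B_2$ does not lift for free. Connections with curvature $\omega_B$ are determined only up to flat orbibundles, classified by $\mathrm{Hom}(\pi_1^{\mathrm{orb}}(B),S^1)$, and this group need not vanish for toric orbifolds (e.g.\ footballs $S^2(p,p)$). So the Lerman--Tolman classification ends up unused, and uniqueness rests entirely on your last step.

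That last step has a genuine gap. The inference ``the obstruction lies in a cohomology group of this sheaf over the contractible set $\calC$, which vanishes'' is not valid as stated. Contractibility kills the cohomology of locally constant sheaves, and fine sheaves are acyclic anyway. But the sheaf $\mathcal{A}$ of germs of $\Torus$-equivariant, $\hat\mu$-preserving, $\bbr_+$-equivariant symplectomorphisms is a priori neither of these. It is not even known to be abelian, so its $H^1$ is just a pointed set.

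What is missing is a structure lemma for $\mathcal{A}$: every such germ has the form $p\mapsto \exp\big(df(\hat\mu(p))\big)\cdot p$, the time-one flow of the Hamiltonian $f\circ\hat\mu$, for a smooth function $f$ homogeneous of degree one.
\begin{itemize}
\item Over $\calC^\circ$ this is immediate in action-angle coordinates.
\item Near orbits over the boundary faces, you must show that the $\kt$-valued function defining the germ is smooth up to the boundary. This uses Schwarz's theorem on smooth invariant functions in the local models; it is the Haefliger--Salem/Lerman--Tolman argument that Lerman adapts to cones.
\end{itemize}

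Granting the lemma, $\mathcal{A}$ is abelian and sits in an exact sequence $0\to\underline{\Lambda}\to\mathcal{C}^\infty_1\to\mathcal{A}\to 0$ of sheaves on the orbit space $\Delta_\xi\simeq N/\Torus$. Here $\mathcal{C}^\infty_1$ is the sheaf of degree-one homogeneous functions, equivalently smooth functions on open subsets of $\Delta_\xi$, so it is fine. The kernel consists of integral linear functions, since $df\in\Lambda$ and degree-one homogeneity rule out constants. Hence $H^1(\Delta_\xi,\mathcal{A})\cong H^2(\Delta_\xi;\Lambda)=0$. With this lemma, together with Lerman's local normal forms giving local triviality of the torsor of isomorphisms, your sketch becomes Lerman's proof.
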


To complete the classification above, Lerman introduces the notion of \emph{good} cones, which are rational cones, with respect to the lattice of circle subgroups, say $\Lambda \subset \kt$, such that each face $F$ has primitive normals in its annihilator $F^0\subset \kt$ forming a $\bbz$-basis of $\Lambda\cap F^0$. Then, it follows from an argument in \cite{BanyagaMolino1}, see also \cite{Lerman:contactToric}, that, up to $\Torus$-equivariant contactomorphism, compact contact toric manifolds of Reeb type are classified by polyhedral strictly convex good cones up to $GL(\Lambda)$-linear maps. Moreover, they are all obtained by contact reduction of the standard contact toric sphere \cite{BG:contactNOTE}, and admit a compatible toric structure.  

\subsection{Toric Sasaki manifolds}\label{sss:ReviewSasaki}

Since the torus orbits in the symplectization are Lagrangians, on a toric contact manifold, say $(N,D,\Torus)$, the (fixed) torus $\Torus \subset \mbox{Con}(D)$ is maximal. Therefore, the Reeb vector field of any $\Torus$-invariant Sasaki structure lies in the Lie algebra $\kt$ of $\Torus$ and thus lies in $\Reebcone$. Conversely, any $\xi \in \Reebcone$ is the Reeb vector field of a Sasaki structure.

One way to see this is to use the main result of \cite{BG:contactNOTE} and find such Sasaki structure as a Sasaki reduction of the sphere. Another way is to use the symplectic potential setup developed, in the cone case, by Martelli-Sparks-Yau~\cite{MSYtoric} and Abreu~\cite{AbreuSasaki}, building on the correspondence between toric K\"ahler metrics on compact toric symplectic manifolds proved by Guillemin \cite{Guillemin_toric}.

To explain this correspondence and its Sasaki version, first notice that any $\xi\in\Reebcone$ determines a characteristic hyperplane $\Pi_\xi \coloneqq \{ x\in \kt^*\,|\, \langle x,\xi\rangle=1\}$ and a polytope
\[
    P_\xi := \Pi_\xi \cap \calC.
\]
We denote the primitive normals of $\calC$ by $\ell_1,\dots,\ell_d \in \Lambda\subset \kt$, so that
\[
    \calC= \left\lbrace x\in \kt^* \mid  \ell_i(x) \geq 0, i=1,\dots,d \right\rbrace.
\]
Note that the $\ell_i$'s are naturally identified with affine-linear functions on $\Pi_\xi$. Given a face $F$ of $P_\xi$ we denote by $F^\circ$ its interior and we let $C_{F\text{cvx}}^0(P_\xi, \bbr)$ the space of continuous functions $\phi$ such for any face $F$ of $P_\xi$, if $F^\circ\neq \emptyset$ then the restriction $\phi_{|_{F^\circ}}$ is smooth and convex. We introduce also the space of \emph{symplectic potentials} as follows
\begin{equation}\label{eqSympPotdefn}
    \calS(P_\xi,\ell_1,\dots,\ell_d ) \coloneqq \left\lbrace \phi \in C_{F\text{cvx}}^0(P_\xi, \bbr) \,\middle|
\begin{array}{l}
   \phi \in C^\infty(P^\circ_\xi, \bbr) \text{ convex}, \text{ and } \\
     \phi-\frac{1}{2}\sum_i\ell_i \log \ell_i  \in C^\infty(P_\xi, \bbr)  
\end{array}
\right\rbrace
\end{equation}
and observe that the group of affine-linear functions on $H_\xi$ acts on $\calS(P_\xi,\ell_1,\dots,\ell_d)$ by addition.  

By combining the works of \cite{Guillemin_toric,MSYtoric, AbreuSasaki} we have that
\begin{proposition}\label{prop:toricSasaki_setup}
    Toric Sasaki structures on $(N,D,T)$ with Reeb vector field $\xi$ are in one-to-one correspondence with the space of symplectic potentials modulo the action of the affine-linear functions $\mbox{Aff}(H_\xi,\bbr)$, that is $\calS(P_\xi,\ell_1,\dots,\ell_d )/\mbox{Aff}(\Pi_\xi,\bbr)$. Using action-angle coordinates $(x,\theta)\in P^\circ_\xi \times \kt/\bbr \xi$ the correspondence is explicit and the transversal K\"ahler metric is given by
    \[
        g_{\phi} = \sum_{i,j} G_{ij}(x)dx_i\otimes dx_j + H^{ij} d\theta_i \otimes d\theta_j,
    \]
    where $G=(G_{ij} =\phi_{,ij})_{1\leq i,j\leq n}$ is the Hessian of the symplectic potential $\phi \in C^\infty(P^\circ_\xi,\mathbb{R})$ and $H=(H^{ij})_{1\leq i,j\leq n}$ is the inverse of $G$.
\end{proposition}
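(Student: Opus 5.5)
The plan is to reduce the statement to the known K\"ahler cone picture and then quote the Guillemin--Abreu--Martelli--Sparks--Yau dictionary. First I would pass to the symplectization. A $\Torus$-invariant Sasaki structure on $(N,D)$ with Reeb field $\xi$ is the same thing as a $\Torus$-invariant K\"ahler cone metric on $D^0_+\simeq N\times\bbr_{>0}$. This cone metric is compatible with $\hat\omega$ and satisfies $\calL_V J=0$ and $J V=\xi$. By the toric Delzant/Lerman theory recalled in \S\ref{ss:Review}, the moment map $\hat\mu$ gives action-angle coordinates $(y,\theta)\in\calC^\circ\times\Torus$ on the open dense set where the action is free. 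In these coordinates the K\"ahler cone metrics correspondently are given by \emph{conical} symplectic potentials $u$ on $\calC^\circ$: these are homogeneous of degree one under the radial dilation, their Hessian is positive definite, and $u-\tfrac12\sum_i\ell_i\log\ell_i$ is smooth up to the boundary, as in \cite{MSYtoric,AbreuSasaki}. Guillemin's boundary analysis \cite{Guillemin_toric} shows that the boundary condition is exactly what is needed for the metric to extend smoothly over the lower-dimensional orbits; this uses that the cone is good.

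Next I would restrict to the slice $\Pi_\xi$. A degree-one homogeneous function on $\calC$ is determined by its restriction $\phi=u|_{P_\xi}$, via $u(y)=\langle y,\xi\rangle\,\phi\bigl(y/\langle y,\xi\rangle\bigr)$. I would check that $u$ is convex with Hessian of rank $n$ transversal to the radial direction exactly when $\phi$ is strictly convex on $P_\xi^\circ$. I would also check that the logarithmic boundary behaviour passes between $u$ and $\phi$, because the $\ell_i$ are linear and hence homogeneous. A small point is the extra term $\tfrac12\langle y,\xi\rangle\log\langle y,\xi\rangle$ coming from homogenising $\ell\log\ell$; it is smooth on $\calC\setminus\{0\}$, so it does not spoil the boundary regularity. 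The ambiguity is the addition of affine functions on $\Pi_\xi$, which correspond to linear functions on $\kt^*$. These do not change the Hessian, hence give the same structure, and conversely two potentials with the same Hessian differ by an affine function. This yields the bijection with $\calS(P_\xi,\ell_1,\dots,\ell_d)/\mbox{Aff}(\Pi_\xi,\bbr)$.

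For the explicit formula, I would pick coordinates $x_1,\dots,x_n$ on $\Pi_\xi$ and angle coordinates $\theta_i$ on $\kt/\bbr\xi$. The transversal K\"ahler structure is then the Guillemin metric built from $\phi$ on the quotient by the Reeb flow, which gives $g_\phi=\phi_{,ij}dx_i dx_j+H^{ij}d\theta_i d\theta_j$. The transversal complex structure is $J\partial_{x_i}=\sum_j G_{ij}\partial_{\theta_j}$, and the contact form stays the fixed $\eta_0$. I expect the main obstacle to be the boundary regularity, namely showing that smooth extension is equivalent to the condition on $\phi$; I would handle it by quoting \cite{Guillemin_toric,AbreuSasaki} in local orbifold-free charts near each face. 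Surjectivity also needs one element of $\calS$ to exist. For this I take Guillemin's canonical potential $\tfrac12\sum\ell_i\log\ell_i$, which is strictly convex on the interior.
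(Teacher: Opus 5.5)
Your overall route is the one the paper relies on: it gives no argument beyond citing Guillemin, Martelli--Sparks--Yau and Abreu, i.e.\ pass to the K\"ahler cone, then slice by $\Pi_\xi$. However, your description of the cone potentials is wrong exactly where the Reeb field enters. If $u$ is homogeneous of degree one on $\calC$, Euler's identity gives $\Hess(u)\,y=0$. So such a $u$ never has positive definite Hessian and defines no K\"ahler cone metric. Moreover, it does not see $\xi$ at all, since it is determined by its restriction to any slice $\Pi_\chi$. Thus nothing in your first step encodes the condition $JV=\xi$ that you impose. What is homogeneous (of degree $-1$) is $\Hess u$, and the normal form in MSY and Abreu is
\[
u=\tfrac12\,\ell_\xi\log\ell_\xi+\tilde u,\qquad \ell_\xi(y)=\langle y,\xi\rangle,
\]
with $\tilde u$ homogeneous of degree one modulo linear functions. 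Then $\Hess u=\Hess\tilde u+\frac{\xi\otimes\xi}{2\ell_\xi}$. Euler gives $\Hess(u)\,y=\tfrac12\xi$, which is precisely $JV=\xi$ for $V=2\sum_i y_i\partial_{y_i}$ and $J\partial_{y_i}=\sum_j u_{,ij}\partial_{\theta_j}$.

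Your slice argument should therefore be run on $\tilde u(y)=\ell_\xi(y)\,\phi\bigl(y/\ell_\xi(y)\bigr)$, not on $u$. With respect to $\kt^*=T\Pi_\xi\oplus\bbr y$, $\Hess u$ is block diagonal: it equals $\ell_\xi^{-1}(\Hess\phi)(y/\ell_\xi)$ on $T\Pi_\xi$ and $\tfrac12\ell_\xi$ on $y$, with no cross terms. Hence it is positive definite iff $\phi$ is strictly convex, which is the statement you wanted.

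Note also that $\tfrac12\ell_\xi\log\ell_\xi$ is not what homogenising $\tfrac12\sum_i\ell_i\log\ell_i$ produces; that yields the extra term $-\tfrac12\bigl(\sum_i\ell_i\bigr)\log\ell_\xi$ instead. Both terms are smooth on $\calC\setminus\{0\}$, so your boundary-regularity step survives. But the $\ell_\xi\log\ell_\xi$ term must be added by hand, and it is the only place where $\xi$ enters.

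With this repair, the rest of your outline matches the cited route: the affine ambiguity corresponding to linear functions on $\kt^*$, the boundary behaviour via Guillemin and Abreu, and the explicit transversal metric obtained by restricting the cone metric to $D$ as in Abreu's paper. It also makes transparent the paper's later remark identifying the parameter spaces for different Reeb fields via $x\mapsto x/\chi(x)$: both are restrictions of the same $\tilde u$ to different slices.
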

Note that for each $\phi\in \calS(P_\xi,\ell_1,\dots,\ell_d)$ there is a CR-structure $J_\phi$ on $D$ determined by the Hessian of $\phi$. The fact that the induced CR-structure (complex structure in the symplectic/K\"ahler case) does not depend on the action-angle coordinates is addressed in \cite{HamFormsII}, see also \cite{Legendre_toricSasaki}.\\

As we will see below, most of the invariant riemannian operators (scalar curvature, laplacian etc) of a K\"ahler/Sasaki toric manifold depend only on the matrix valued function $H$ given by the inverse of the Hessian of a symplectic potential, it is sometimes useful to work directly with it instead of the potential itself. Another advantage is that this matrix valued function extends smoothly on the boundary of the polytope as we now recall. 

\begin{proposition}\cite{HamFormsII}\label{propH2F2_aboutH}
    A strictly convex smooth function $\phi$ on the interior, $\mathring{P}$, of $P$ is the restriction of a symplectic potential if and only if $(\Hess \phi)^{-1}$ is the restriction to $\mathring{P}$ of a smooth $S^2(\kt^*)$-valued function function $H : P \ra S^2(\kt^*)$ satisfying the following boundary conditions where $u_k=d\ell_k \in \kt$ and $F_k=\ell_k^{-1}(0)\cap P$: 
    \begin{itemize}
        \item $H_x(u_k,\cdot)=0$  for any $x\in \mathring{F}_k$,
        \item $dH(u_k,u_k)_x= 2u_k$  for any $x\in \mathring{F}_k$,
        \item $H_x$ is positive definite on $\kt/\langle u_{i_1},\dots,  u_{i_k}\rangle$ for any $x$ in the interior of the face $F_{i_1} \cap \dots, \cap F_{i_k}$.  
    \end{itemize}
\end{proposition}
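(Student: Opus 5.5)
The plan is to compare with the reference Guillemin potential $\phi_P=\frac12\sum_k\ell_k\log\ell_k$, whose Hessian can be computed explicitly. Write $\phi=\phi_P+h$ on $\mathring P$. Here $\Hess\phi_P=\frac12\sum_k \ell_k^{-1}u_k\otimes u_k$, and $\phi_P$ is strictly convex on $\mathring P$: since $P$ is compact, the normals $u_k$ span $\kt^*$, so the sum is positive definite. I would prove the two implications separately.

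For the direct implication, let $\phi\in\calS$. Then $h$ is smooth up to the boundary, and
\[
H=(\Hess\phi_P+\Hess h)^{-1}.
\]
The question is local near a point $x$ in the interior of a face $F=F_{i_1}\cap\dots\cap F_{i_k}$. Since $P$ is Delzant/good, I can choose affine coordinates in which $\ell_{i_j}=x_j$ for $j\le k$, with the remaining $\ell$'s positive near $x$. In these coordinates
\[
\Hess\phi=\operatorname{diag}\Bigl(\tfrac{1}{2x_1},\dots,\tfrac{1}{2x_k},0,\dots\Bigr)+S,
\]
where $S$ is smooth. Factor out $D=\operatorname{diag}(x_1^{-1/2},\dots,x_k^{-1/2},1,\dots)$. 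This gives $\Hess\phi=D(\tfrac12 I_k\oplus 0+ D^{-1}SD^{-1})D$. The middle matrix $M$ extends continuously, but it is not obviously smooth, because its off-diagonal terms are $\sqrt{x_j}S_{ja}$.

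I would handle this by inverting directly with a Schur complement in the block decomposition (first $k$ coordinates, rest). The $k\times k$ block is $A=\operatorname{diag}(1/(2x_j))+S_{11}$, whose inverse is $\operatorname{diag}(2x_j)\bigl(I+S_{11}\operatorname{diag}(2x_j)\bigr)^{-1}$. This is smooth up to the boundary and equals $\operatorname{diag}(2x_j)+O(x^2)$. The Schur complement $S_{22}-S_{21}A^{-1}S_{12}$ is then smooth too. At the face it equals $S_{22}$, which is the Hessian of $\phi|_F$ along the face, up to the $\phi_P$ contributions of the other facets. Strict convexity on the face interior, which holds because $\phi\in C^0_{F\text{cvx}}$ and the restriction $\phi|_F-\phi_{P_F}$ is smooth, gives its invertibility. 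So $H$ is smooth. Reading off the formulas at $x_j=0$ gives $H(u_j,\cdot)=0$ and $\partial_{x_j}H(u_j,u_j)=2$. The latter is the statement $dH(u_k,u_k)=2u_k$. Positivity on $\kt/\langle u_{i_1},\dots\rangle$ is exactly invertibility of the Schur complement.

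For the converse, assume $H$ extends smoothly and satisfies the three conditions. Set $G=H^{-1}$ on $\mathring P$ and $h=\phi-\phi_P$. I would show that $\Hess h=G-\Hess\phi_P$ extends smoothly to $P$. The same block computation, run in reverse, does this: in adapted coordinates the first two boundary conditions give
\[
H=\begin{pmatrix}\operatorname{diag}(2x_j)+O(x^2) & O(x)\\ O(x)& B\end{pmatrix},
\]
with $B$ invertible by the third condition. Inverting yields $\operatorname{diag}(1/(2x_j))$ plus a smooth remainder.

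The main obstacle sits here. Smoothness of the $O(x^2)$ correction after inversion needs $H_{jj}-2x_j$ to vanish to second order, and $H_{ja}$ for $a\neq j$ (including $a\le k$) to vanish to first order. These facts follow from smoothness together with the vanishing of $H(u_j,\cdot)$ on the whole facet. The delicate part is the corners, where several facets meet. There I would argue via the rows: $H(u_j,\cdot)=x_j\,R_j$ with $R_j$ smooth and $R_j(u_j)|_{x_j=0}=2$. Then $H=XR$ with $X=\operatorname{diag}(x_1,\dots,x_k,1,\dots)$, and also $H=R^\transpose X$ by symmetry. This shows that $X^{-1}H$ is smooth and invertible near the corner, so
\[
G=(X^{-1}H)^{-1}X^{-1},
\]
whose singular part is exactly $\operatorname{diag}(1/(2x_j))$ up to smooth terms. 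Once $\Hess h$ is smooth on $P$, integrating twice shows that $h$ is smooth on $P$, with $P$ convex. For continuity and face-convexity, the restriction of $\phi$ to each face is, up to a smooth function, the Guillemin potential of that face, and its Hessian along the face is controlled by the inverse of $B$. Hence $\phi\in\calS$.
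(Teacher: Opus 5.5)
The gap is in the direct implication, at the one step that is not a computation. You need the Schur complement $\Sigma=S_{22}-S_{21}A^{-1}S_{12}$ to be invertible near a point $x$ in the interior of a face $F$. Equivalently, $S_{22}(x)=\Hess(\phi|_F)(x)$ must be nondegenerate. You deduce this from $\phi\in C^0_{F\text{cvx}}$ together with smoothness of $\phi|_F-\phi_{P_F}$, but neither gives it:
\begin{itemize}
\item membership in $C^0_{F\text{cvx}}$ only says $\phi|_{F^\circ}$ is convex;
\item the Guillemin form of $\phi|_F$ only controls it near $\partial F$;
\item even strict convexity of $\phi|_{F^\circ}$ as a function allows a degenerate Hessian at interior points.
\end{itemize}
Here is a counterexample. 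Take $P=[0,1]\times[-1,1]$ with labels $x_1,\,1-x_1,\,1+x_2,\,1-x_2$, and set $\phi=\phi_P-\tfrac12(1-x_1)x_2^2$. Then $\phi-\phi_P$ is smooth and $\det\Hess\phi\geq x_1+x_2^2>0$ on $\mathring P$. The restriction of $\phi$ to every face is convex; on $\{x_1=0\}$ its second derivative is $x_2^2/(1-x_2^2)$. Yet $\Hess\phi(x_1,0)=\operatorname{diag}\bigl(\tfrac{1}{2x_1(1-x_1)},x_1\bigr)$, so $H^{22}(x_1,0)=1/x_1$ blows up at the facet. With $\calS$ read literally, the forward implication is false, so no argument can close this step as you set it up.

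What is missing is a hypothesis, not a computation. Nondegeneracy of $\Hess(\phi|_{F^\circ})$ on every face must be part of the definition of a symplectic potential. This is the usual convention, and it is equivalent to Abreu's condition that $\det(\Hess\phi)\prod_k\ell_k$ extends to a smooth positive function on $P$. In your coordinates, with $\Lambda=\operatorname{diag}(2x_1,\dots,2x_k)$, one has $\det(\Hess\phi)\prod_{j\le k}x_j=2^{-k}\det(I+\Lambda S_{11})\det\Sigma$, which tends to $2^{-k}\det S_{22}(x)$. Once you assume this explicitly, $\Sigma$ is invertible near $x$ and the rest of your forward argument goes through as written. Your converse is correct:
\begin{itemize}
\item $R=X^{-1}H$ is block upper triangular at $x$, with diagonal blocks $2I$ and $B(x)$.
\item The identity $Re_b=2e_b$ on $\{x_b=0\}$ uses that $H_{ab}$ is divisible by $x_ax_b$ for $a\neq b\le k$. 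This is what makes the singular part of $H^{-1}=R^{-1}X^{-1}$ exactly $\operatorname{diag}(1/(2x_j))$.
\item $(R^{-1})_{22}(x)=B(x)^{-1}$ gives nondegenerate convexity of $\phi|_{F^\circ}$.
\end{itemize}
So the converse produces exactly the strengthened face condition that the forward direction has to assume.
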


\begin{remark}
    As explained in \cite[Appendix]{ApostolovCalderbankLegendre_weighted}, given a $\Torus$-invariant Sasaki manifold $(N,D_0,J_0,\eta,\xi)$, there is an equivariant bijection between the space of Sasaki structures compatibles with the fixed transversal holomorphic structure induced by $J_0$ on $TN/\bbr \xi$, up to transversal biholomorphisms, and the Teichm\"uller space of CR-structures on $D_0$ up to $\Torus$-equivariant contactomorphisms. In the toric case, this correspondence can be understood more directly using the beautiful Guillemin correspondence in toric K\"ahler geometry: {\it the Legendre transform of the symplectic potential is the K\"ahler potential} \cite{Guillemin_toric}. To apply this in the Sasaki setting, one needs to work directly on the toric K\"ahler cone associated to the toric Sasaki manifolds, this is done in \cite{MSYtoric}.
\end{remark}

\noindent {\bf We will use this theory as follows:} Let $(N,D,\Torus)$ be a compact contact toric manifold of Reeb type and pick $\xi\in\Reebcone$. This determines a moment cone $\calC$, a characteristic hyperplane $H_\xi$ and a polytope $P_\xi$, labeled by a set of affine-linear functions $\ell_1,\dots,\ell_d$. Note that since the contact structure $D$ is fixed, $\xi$ determines a unique contact form $\eta=\eta_\xi$. Given $\phi\in \calS(P_\xi,\ell_1,\dots,\ell_d)$ we consider the CR structure $(N,D,J_\phi)$, which is $\Torus$-invariant and admits a compatible Sasaki structure with with Reeb vector field $\xi$ and contact form $\eta$. Now, as the contact moment polytope is the orbit space (see e.g.\ \cite{Lerman:contactToric}), that is, $P_\xi \simeq N/\Torus$, the space of $\Torus$-invariant pseudohermitian structures on $(N,D,J_\phi)$ is in one-to-one correspondence with $C^\infty(P_\xi,\bbr_+)$. Indeed, each $f\in C^\infty(P_\xi,\bbr_+)$ determines a pseudo-hermitian structure $(N,D,J_\phi,f^{-1}\eta)$. 

\begin{corollary}
    Let $(N,D,\Torus)$ be a compact contact toric manifold of Reeb type and let $\xi\in\Reebcone$. Then the space of $\Torus$-invariant pseudo-hermitian structures is parameterized by $\calS(P_\xi,\ell_1,\dots,\ell_d )/\mbox{Aff}(H_\xi,\bbr) \times C^\infty(P_\xi,\bbr_+)$.   
\end{corollary}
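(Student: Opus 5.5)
The plan is to assemble the corollary from two ingredients: Proposition~\ref{prop:toricSasaki_setup}, which parameterizes $\Torus$-invariant CR structures on $D$ that admit a compatible Sasaki structure with Reeb field $\xi$, and the identification of $\Torus$-invariant positive contact forms with positive functions on the orbit space $P_\xi$. We first fix $\xi\in\Reebcone$. Since $D$ is fixed, $\xi$ determines a unique contact form $\eta=\eta_\xi$ with $\ker\eta=D$ and $\eta(\xi)=1$, and $\eta$ is $\Torus$-invariant because $\Torus$ is abelian and preserves $D$.

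\textbf{Step 1: the map.} To a pair $([\phi],f)$ we assign $(D,J_\phi,f^{-1}\eta)$, viewing $f$ as a $\Torus$-invariant function on $N$ via the quotient map $N\to N/\Torus\simeq P_\xi$. This is well defined, since $J_\phi$ depends only on $\Hess\phi$ and is therefore unchanged when $\phi$ is modified by an element of $\mbox{Aff}(H_\xi,\bbr)$. The resulting structure is pseudohermitian: $f^{-1}\eta$ is positive, and $J_\phi$ is integrable and pseudoconvex, being part of a Sasaki structure.

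\textbf{Step 2: surjectivity.} Let $(D,J,\eta')$ be a $\Torus$-invariant pseudohermitian structure, with $J$ in the relevant class of $\Torus$-invariant CR structures. Any positive contact form for the co-oriented $D$ can be written $\eta'=g\,\eta$ with $g>0$. Since both forms are $\Torus$-invariant, $g$ is invariant and descends to a function on $P_\xi$; set $f=g^{-1}$. Next, the pair $(J,\eta)$ is a $\Torus$-invariant Sasaki structure with Reeb field $\xi$, because $\xi\in\kt$ preserves $J$. Proposition~\ref{prop:toricSasaki_setup} then gives $J=J_\phi$ for some $\phi$, unique modulo affine functions.

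\textbf{Step 3: injectivity.} The form $f^{-1}\eta$ determines $f$. The structure $J_\phi$ together with $\eta$ determines the Sasaki structure, and Proposition~\ref{prop:toricSasaki_setup} recovers $[\phi]$ from it.

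\textbf{Main obstacle: smoothness of $f$ up to $\partial P_\xi$.} Smooth $\Torus$-invariant functions on $N$ must be matched exactly with $C^\infty(P_\xi,\bbr_+)$. I would argue this locally using the equivariant slice theorem: near a point over a face where a subtorus acts, $N$ looks like $\Torus\times_{\Torus_F}\bbc^k\times\bbr^{m}$. By Schwarz's theorem, invariant smooth functions are smooth functions of the $|z_i|^2$ together with the remaining coordinates. On the other hand, the contact moment map $\langle\hat\mu,\cdot\rangle$ restricted to $P_\xi$ is affine in exactly these quantities. Together these give the correspondence, completing the proof.
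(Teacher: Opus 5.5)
Your proposal is correct and follows the paper's own route. The paper also fixes $\eta=\eta_\xi$, takes $J_\phi$ from $[\phi]$ via Proposition~\ref{prop:toricSasaki_setup}, and uses the orbit-space identification $P_\xi\simeq N/\Torus$ (citing Lerman) to write every $\Torus$-invariant positive contact form as $f^{-1}\eta$ with $f\in C^\infty(P_\xi,\bbr_+)$.

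Your surjectivity check, that $(D,J,\eta_\xi)$ is Sasaki because $\xi\in\kt$ preserves $J$, and your slice/Schwarz argument for smoothness up to $\partial P_\xi$ fill in details the paper leaves implicit. As in the paper's statement of Proposition~\ref{prop:toricSasaki_setup}, ``$J=J_\phi$'' should be read up to a $\Torus$-equivariant contactomorphism preserving $\eta_\xi$, i.e.\ up to the choice of angle coordinates.
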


\begin{remark}
    A different choice of Sasaki-Reeb vector field $\chi \in \Reebcone$ gives another parameter space $\calS(P_\chi,\ell_1,\dots,\ell_d )/\mbox{Aff}(H_\chi,\bbr) \times C^\infty(P_\chi,\bbr_+)$ but the map $ P_\xi \ni x \mapsto \frac{x}{\chi(x)} \in P_\chi$ identifies both spaces \cite{Legendre_toricSasaki}. 
\end{remark}

It will be useful at times to fix a coset for the action of $\mbox{Aff}(\Pi_\chi,\bbr)$ on the space of symplectic potentials.
\begin{definition}
    The space of \emph{normalised} symplectic potentials at $o\in P_{\xi_0}^\circ$ is
    \begin{equation} \label{eqNormSympPotdefn}
        \mathcal{S}_o(P_{\xi_0},\ell_1,\dots,\ell_d) = \left\lbrace \varphi\in\mathcal{S}(P_{\xi_0},\ell_1,\dots,\ell_d) \mid \varphi(o)=0,\ d\varphi(o)=0 \right\rbrace.
    \end{equation}
    Equivalently, $\mathcal{S}_o(P_{\xi_0},\ell_1,\dots,\ell_d)$ is the set of symplectic potentials that vanish at $o$ and have a minimum there.
\end{definition}

\begin{remark}\label{rem-linaffine}
    Our parametrisation of the conformal class of a contact form, $f^{-1}\eta$ for positive smooth functions $f$, has been chosen so that the Sasaki pseudohermitian forms in the conformal class are particularly simple to identify: if $\eta$ is sasakian, then $f^{-1}\eta$ is sasakian if and only if $f$ is a Killing potential for $(N,D,J,\eta)$. In particular in the toric case we see that $f^{-1}\eta$ is sasakian if and only if $f$ is an affine-linear positive function on $P$.
\end{remark}

\section{The CR Yamabe problem on toric CR manifolds}\label{sec:toric_yamabe}

\subsection{Scalar curvature of toric Sasaki-manifolds and the Tanaka-Webster scalar curvature}
As recalled above, for any Sasaki-Reeb vector field $\xi \in \Reebcone$ with transversal polytope $P=P_\xi$, we can pick a symplectic potential $\phi\in \mathcal{S}(P_\xi, \ell_1,\dots\ell_d)$ and consider the associated Sasaki structure $(N,D,J_\phi,\eta, \xi)$. The so-called {\it transversal scalar curvature} of $(N,D,J_\phi,\eta, \xi)$ \cite{BoyerGalicki} coincides in this case with the Tanaka-Webster scalar curvature, which in the toric case, is the pull-back by the moment map of the function \begin{equation}\label{eq:AbreuForm}
    \ScalTW(\eta, J_\phi) = -\sum_{i,j=1}^n (H^{ij})_{,ij}
\end{equation}
where again $(H^{ij}_\phi)$ is the unique continuous extension to $P$ of \[(\Hess \phi)^{-1} \in C^\infty (P^\circ, (\kt^*)^{\otimes 2}).\] As explained above, this matrix extends smoothly on $P$ and, thus, $\ScalTW(\eta,J_\phi)\in C^\infty(P,\bbr)$. The {\it Abreu formula} \eqref{eq:AbreuForm} is proved in \cite{Abreu_toric,AbreuSasaki}.

More generally, the Tanaka--Webster scalar curvature of the pseudo-hermitian structure $(N,D,J_\phi,f^{-1}\eta, \xi)$ for $(\phi,f)\in \calS(P_\xi,\ell_1,\dots,\ell_d ) \times C^\infty(P_\xi,\bbr_+)$ is
 \begin{equation}\label{eq:scalTW0}
    \ScalTW(f^{-1}\eta, J_\phi) = f\ScalTW(1,\phi)-2(n+1)\Delta^{g_\phi} f  - (n+1)(n+2) f^{-1} |df|^2_{g_\phi}.
\end{equation}
Using action-angle coordinates $(x,\theta)$ as in \S\ref{sss:ReviewSasaki}, any $\Torus$-invariant function on $N$ is a function of $x$, so that $f$, $\Delta^{g_\phi} f = - (H^{ij}_\phi f_i)_j$ and $|df|^2_{g_\phi} = H_{\phi}^{ij}f_if_j$ are functions in the variable $x$, i.e.\ functions on $P$. Expressing \eqref{eq:scalTW0} in action-angle coordinates gives the following expression for the scalar curvature 
\begin{equation}\label{eq:scalTW}
\begin{split}
    \ScalTW(\phi,f)\coloneqq & \ScalTW(f^{-1}\eta, J_\phi)\\
    = & -(H_\phi^{ij})_{,ij}f +2(n+1)(H_\phi^{ij}f_i)_j -(n+1)(n+2) f^{-1} H_\phi^{ij}f_if_j.
\end{split}
\end{equation}
We use the notation $\ScalTW(\phi,f) \coloneqq \ScalTW(f^{-1}\eta, J_\phi)$ for the rest of this paper (so that \eqref{eq:AbreuForm} is $\ScalTW(\phi,1)$).

\subsection{The equivariant CR Yamabe problem over a toric Sasaki manifold}

For notational convenience, we fix once and for all a Sasaki-Reeb vector field $\xi_0$ and a labelling $\ell_1,\dots,\ell_d$ of the polytope, and set $P=P_{\xi_0}$, $\mathcal{S}(P)=\mathcal{S}(P_{\xi_0},\ell_1,\dots,\ell_d)$. 

As any smooth $\Torus$-invariant function on $N$ is the pullback of a smooth function on (an open set containing) $P$, the $\Torus$-equivariant cscTW equation translates to a PDE on functions defined on $P$: it consists in finding a function $f\in\mathcal{C}^{\infty}(P,\mathbb{R}_{>0})$ such that
\begin{equation}\label{eq:toric_cscTW}
    C= -(H_\phi^{ij})_{,ij}f +2(n+1)(H_\phi^{ij}f_i)_j -(n+1)(n+2) f^{-1} H_\phi^{ij}f_if_j.
\end{equation}
for some constant $C$. In this perspective, the equivariant cscTW problem over a compact CR manifold of toric Sasaki type is equivalent to a \emph{boundary value problem} using the boundary conditions imposed on $(H_{ij})$ see Proposition~\ref{propH2F2_aboutH} which is particularly simple to express on the toric Sasaki $3$-sphere as we now show.

\subsubsection{The toric cscTW on the 3-sphere as a boundary value problem.} \label{ssCRyam2sphere}
In the case of the toric $3$-sphere, the polytope is a segment $P=[-1,1]$ and any toric Sasaki structures $(D_H,J_H,\eta_H)$ corresponds via Proposition~\ref{propH2F2_aboutH} and the theory explained above, to a smooth function $H:P\ra \bbr$ positive on the interior $(-1,1)$, vanishing at the end points and such that $H'(\pm 1)= \mp 2$. The cscTW equation \eqref{eq:toric_cscTW} in the conformal class of $\eta_H$ corresponds to a solution $f\in C^\infty(P,\bbr_+)$ of the equation   
   \begin{equation}\label{eq:toric_cscTWsphere}
    C= -H''f + 4 H' f' +4Hf'' - 6 f^{-1} H (f')^2.
\end{equation} where $C\in\bbr$ is a constant.  The boundary conditions of Proposition~\ref{propH2F2_aboutH} impose that  \begin{equation}\label{eq:toric_cscTWspherebd1}
    C= -H''(-1) f(-1) + 8 f'(-1) \mbox{ and }  C= -H''(1) f(1) - 8 f'(1). 
\end{equation}

\smallskip

In this setting, the CR Einstein--Hilbert functional is
    \begin{equation}
        \EH(\varphi,f)=\frac{2\int_{\partial P}f^{-1}d\sigma+\int_Pf^{-2}f''H dx}{\norm{f^{-1}}_{L^2(P)}}.
    \end{equation}
A minimizer $f$ of $\EH(\varphi,f)$ with respect to $H$ solves the toric cscTW equation~\eqref{eq:toric_cscTW} in dimension $1$, namely \eqref{eq:toric_cscTWsphere}. Recall that $C$ is the value of $\EH(\varphi,f)$, up to multiplication by a positive constant. Now, let $x\in P^\circ$ be a critical point of $H : [-1,1] \ra \bbr_{\geq 0}$. Evaluating at $x$ we obtain
\begin{equation}
    C+f(x)\,H''(x)+6 f(x)^{-1} H(x)(f'(x))^2= 4 H(x) f''(x),
\end{equation}
and as both $f$ and $H$ are positive,
\begin{equation}
    C+f(x)\,H''(x) \leq 4 H(x) f''(x).
\end{equation}
Thus we see that in general, it might not be possible for $f$ to be concave down: for example, if $\CRYam(H)\geq 0$ and the critical point $x$ is a local minimum of $H$, if follows that
\[
    4 H(x) f''(x) \geq C + f(x) \, H''(x) \geq 0.
\]
Note that we will continue to look at the $3$-sphere (over $\bbc\bbp^1$) in \S\ref{sec:CP1}.

\subsubsection{Link with weighted cscK structures} In the notation of Lahdili \cite{Lahdili_weighted}, for any function $\v\in C^\infty(P,\bbr_{>0})$, the $\v$-\emph{weighted scalar curvature} is
\begin{equation}
    \scal_\v(\omega,J_\varphi) \coloneqq -\sum_{i,j=1}^n (\v H^{ij})_{,ij}.
\end{equation}
\begin{lemma}
    For $\v(x) = f^{-n-1}(x)$ and a K\"ahler form $\omega_0$ such that $\pi^*\omega_0=d\eta_0$, 
    \begin{equation}\label{scalTWvsWeightedcscK}
        \ScalTW(\varphi,f) = f^{n+2}\scal_\v(\omega_0,J_\varphi) + (n+1) \langle H,\Hess f\rangle 
    \end{equation}
    where $\langle H,\Hess f\rangle = \Tr(H\,\Hess f) = \sum_{i,j=1}^n H^{ij}f_{ij}.$     
\end{lemma}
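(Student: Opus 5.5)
The identity \eqref{scalTWvsWeightedcscK} should follow from a direct computation in the affine coordinates $x$ on $P^\circ$. It suffices to work on $P^\circ$: by Proposition~\ref{propH2F2_aboutH}, $H=(H^{ij})$ extends smoothly to $P$, so both sides are smooth on $P$ and agree there by continuity. The plan is to expand $\scal_\v$ with the Leibniz rule, multiply by $f^{n+2}$, and compare term by term with the expression \eqref{eq:scalTW} for $\ScalTW(\varphi,f)$.

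\textbf{Expanding the weighted scalar curvature.} Using the symmetry of $H$, we have
\[
(\v H^{ij})_{,ij} = \v\, H^{ij}_{,ij} + 2\v_{,i} H^{ij}_{,j} + \v_{,ij}H^{ij}.
\]
For $\v=f^{-n-1}$ the derivatives are
\[
\v_{,i}=-(n+1)f^{-n-2}f_i, \qquad \v_{,ij} = -(n+1)f^{-n-2}f_{ij}+(n+1)(n+2)f^{-n-3}f_if_j .
\]
Substituting these and multiplying by $f^{n+2}$ gives
\[
f^{n+2}\scal_\v = -f H^{ij}_{,ij} + 2(n+1)f_iH^{ij}_{,j} + (n+1)H^{ij}f_{ij} - (n+1)(n+2)f^{-1}H^{ij}f_if_j .
\]

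\textbf{Comparing with \eqref{eq:scalTW}.} In \eqref{eq:scalTW}, expand the divergence term as
\[
2(n+1)(H^{ij}f_i)_{,j} = 2(n+1)H^{ij}_{,j}f_i + 2(n+1)H^{ij}f_{ij}.
\]
Subtracting the previous display, the terms in $H^{ij}_{,ij}$, in $f_iH^{ij}_{,j}$ and in $f^{-1}H^{ij}f_if_j$ cancel exactly. What remains is $(n+1)H^{ij}f_{ij} = (n+1)\langle H,\Hess f\rangle$, which is the claimed identity.

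\textbf{Main obstacle.} There is no real obstacle here. The only points needing care are bookkeeping: getting the factor $2$ in the cross term right, which uses the symmetry of $H$, and the sign of the $f^{-n-3}$ term in $\v_{,ij}$.
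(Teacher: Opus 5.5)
Your proposal is correct and takes essentially the same route as the paper: a direct Leibniz expansion of $(f^{-n-1}H^{ij})_{,ij}$, compared term by term with \eqref{eq:scalTW}. The paper organises the same bookkeeping by regrouping into $\Delta^{g_\varphi} f$ and $|df|^2_{g_\varphi}$ and recognising \eqref{eq:scalTW0}. Your coefficients and signs all check out, leaving exactly $(n+1)\langle H,\Hess f\rangle$.
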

\begin{proof}
    The direct computation gives 
    \begin{equation}
    \begin{split}
        (H^{ij}f^{-n-1})_{,ij} &= (H^{ij}_{,i}f^{-n-1})_{,j} -(n+1)(f^{-n-2}H^{ij}f_{,i})_{,j}  \\
        &= H^{ij}_{,ij}f^{-n-1} -(n+1)f^{-n-2}H^{ij}_{,i}f_{,j} -(n+1)f^{-n-2}H^{ij}_{,j}f_{,i} \\
        &\qquad\qquad\qquad -(n+1)f^{-n-2}H^{ij}f_{,ij} +(n+1)(n+2)f^{-n-3}H^{ij}f_{,i}f_{,j}\\
        &= - f^{-n-1}\scal(\omega) + 2(n+1)f^{-n-2} \Delta^\omega f + 2(n+1) f^{-n-2} H^{ij} f_{,ij}\\
        &\qquad\qquad\qquad -(n+1)f^{-n-2}H^{ij}f_{,ij}  +(n+1)(n+2) f^{-n-3}\abs{df}^2_\omega\\
        &= - f^{-n-2}\ScalTW(\varphi,f) + (n+1)f^{-n-2}H^{ij}f_{,ij}
    \end{split}
    \end{equation}
    since $H^{ij}f_{,i}f_{,j}=\abs{df}^2_{\omega_0}$, and 
    \begin{equation}
        \Delta^{\omega_0} f=-\sum_{i,j} (H^{ij} f_{,j})_{,i} = -\sum_{i,j} (H^{ij}_{,i} f_{,j} +H^{ij} f_{,ij}).\qedhere
    \end{equation}  
\end{proof}
In particular, we can also rewrite the cscTW equation as
\begin{equation}
    (H^{ij}f^{-n-1})_{,ij}  = (n+1)f^{-n-2}\left(H^{ij}f_{,ij}-C\right). 
\end{equation}
We will alternatively consider both $f$ and $\varphi$ as variables in this equation.

\smallskip

To rewrite the toric CR Einstein--Hilbert functional, we will need the following integration by parts Lemma.
\begin{lemma}[\cite{Donaldson_stability_toric,ApostolovMaschler_EinsteinMaxwell}]
    For any functions $\psi_0,\psi_1 \in C^\infty(P,\bbr)$ and any symplectic potential $\varphi$, we have
    \begin{equation}
        \int_P \psi_1 (H^{ij}\psi_0)_{ij} dx = -2\int_{\partial P} \psi_1\psi_0 \,d\sigma  +\int_P\psi_0 \langle H, \Hess \psi_1\rangle dx 
    \end{equation}
    where $d\sigma$ is the measure on $\partial P$ defined by $d\ell_i\wedge d\sigma =-dx$ on the facet $\ell_i^{-1}(0)\cap P$.
\end{lemma}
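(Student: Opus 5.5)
We integrate by parts twice with the divergence theorem, using the boundary behaviour of $H$ from Proposition~\ref{propH2F2_aboutH}. On $P^\circ$, for smooth $\psi_0,\psi_1$, the pointwise identity
\begin{equation}
    \psi_1 (H^{ij}\psi_0)_{,ij} = \bigl(\psi_1 (H^{ij}\psi_0)_{,j}\bigr)_{,i} - \bigl(\psi_{1,i} H^{ij}\psi_0\bigr)_{,j} + \psi_0 H^{ij}\psi_{1,ij}
\end{equation}
holds. Since $H$ extends smoothly to $P$, every term is smooth up to the boundary, so integrating over $P$ is legitimate. The last term gives $\int_P \psi_0\langle H,\Hess\psi_1\rangle dx$, and the two divergence terms become boundary integrals over the facets $F_k=\ell_k^{-1}(0)\cap P$. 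The codimension-$\geq 2$ faces have measure zero in $\partial P$, so only the facet interiors matter.

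We then evaluate the boundary terms facet by facet. On $\mathring F_k$ the outward conormal is $-u_k=-d\ell_k$ (since $\ell_k\geq 0$ on $P$), and the Euclidean boundary measure relates to $d\sigma$ by $d\ell_k\wedge d\sigma=-dx$. Accounting for the orientation, the flux of a vector field $V$ through $F_k$ is therefore $-\int_{F_k}\langle u_k,V\rangle\, d\sigma$, up to checking the sign convention, which is the one point to verify carefully.

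For the second divergence term the flux involves $\psi_{1,i}H^{ij}(u_k)_j\psi_0$. This vanishes because $H_x(u_k,\cdot)=0$ on $\mathring F_k$ by Proposition~\ref{propH2F2_aboutH}.

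For the first divergence term the flux is $\psi_1 (u_k)_i (H^{ij}\psi_0)_{,j} = \psi_1\bigl((u_k)_iH^{ij}_{,j}\psi_0 + (u_k)_iH^{ij}\psi_{0,j}\bigr)$. The second summand again vanishes by $H(u_k,\cdot)=0$. For the first summand, differentiate the identity $H(u_k,\cdot)=0$ along $F_k$: the tangential derivatives of $H(u_k,\cdot)$ vanish there. Combined with the condition $dH(u_k,u_k)=2u_k$, this gives $\sum_j (u_k)_iH^{ij}_{,j} = \langle dH(u_k,\cdot), \cdot\rangle$ contracted appropriately, which equals $2$.

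To justify this, decompose $\partial_j$ into a component normal to $F_k$ and components tangent to it. The tangential part contributes zero. The normal part yields $dH(u_k,u_k)$ paired with the dual direction, which by the second condition of Proposition~\ref{propH2F2_aboutH} equals $2\langle u_k,\cdot\rangle$ normalised to $2$. Hence the boundary contribution is $-2\int_{F_k}\psi_1\psi_0\,d\sigma$, and summing over $k$ gives the claim.

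The main obstacle is this last linear-algebra step, namely showing that $\sum_{i,j}(u_k)_i\,\partial_jH^{ij}=2$ on $\mathring F_k$ using only the stated boundary conditions. This has to be done together with fixing the orientation and sign conventions for $d\sigma$, so that the coefficient comes out exactly $-2$ rather than $+2$.
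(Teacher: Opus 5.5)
Your proposal is correct: it is the standard double integration by parts against the boundary conditions of Proposition~\ref{propH2F2_aboutH}, which is the argument of the references the paper cites (the paper itself gives no proof). The step you flag does close as you sketch. Writing $h=H(u_k,\cdot)$, one has $dh=c\otimes d\ell_k$ on $\mathring{F}_k$ for some vector $c$, so $\sum_j\partial_j h^j=\langle c,u_k\rangle$ while $dH(u_k,u_k)=\langle c,u_k\rangle\,d\ell_k=2\,d\ell_k$; with outward unit normal $-u_k/\abs{u_k}$ and $d\sigma=dS/\abs{u_k}$, the boundary coefficient is indeed $-2$.
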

Using this Lemma with $\psi_0 =f^{-n-1}=\v$ and $\psi_1=f$, we can rewrite
\begin{equation}
    \Scaltot(\phi,f) = \int_P f^{-n-1}\ScalTW(\phi,f) dx.
\end{equation}
\begin{corollary}
    With the previous notation, we have the following expressions for the volume and the total scalar curvature of toric pseudohermitian forms:
    \begin{equation}
    \begin{split}
        \Voltot(\varphi,f) =& \Voltot(f) =  \int_P f^{-n-1} dx \\
        \Scaltot(\varphi,f) =& 2\int_{\partial P}f^{-n}d\sigma +n \int_P f^{-n-1} \langle H_\varphi, \Hess f\rangle dx.
    \end{split}
    \end{equation}
\end{corollary}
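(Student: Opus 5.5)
The plan is to compute both quantities directly on $N$, push them down to $P$ via the moment map, and then substitute the weighted-scalar-curvature formula \eqref{scalTWvsWeightedcscK} and the integration by parts Lemma.

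\textbf{Volume.} For the volume, I would first note that
\[
d(f^{-1}\eta_0)=f^{-1}d\eta_0-f^{-2}\,df\wedge\eta_0 .
\]
Hence, in $(f^{-1}\eta_0)\wedge(d(f^{-1}\eta_0))^n$, every term containing a factor $df\wedge\eta_0$ is killed by the leading factor $\eta_0$. This gives
\[
(f^{-1}\eta_0)\wedge(d(f^{-1}\eta_0))^n=f^{-n-1}\,\eta_0\wedge(d\eta_0)^n .
\]
Next I would use the action-angle coordinates $(x,\theta)$ of Proposition~\ref{prop:toricSasaki_setup}. In these coordinates $\eta_0\wedge(d\eta_0)^n$ is, up to a universal positive constant, $dx\wedge d\theta\wedge\eta_0$. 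Integrating over the torus fibres, the push-forward of this form to $P$ is a constant multiple of Lebesgue measure $dx$. With the normalization implicit in the statement, this yields $\Voltot(f)=\int_P f^{-n-1}dx$. In particular the volume does not depend on $\varphi$.

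\textbf{Total scalar curvature: reduction to $P$.} By the same push-forward argument,
\[
\Scaltot(\varphi,f)=\int_P f^{-n-1}\ScalTW(\varphi,f)\,dx .
\]
I would then insert \eqref{scalTWvsWeightedcscK} with $\v=f^{-n-1}$. Since $\scal_\v=-(H^{ij}\v)_{,ij}$, this gives
\[
f^{-n-1}\ScalTW(\varphi,f)=-f\,(H^{ij}f^{-n-1})_{,ij}+(n+1)f^{-n-1}\langle H,\Hess f\rangle .
\]

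\textbf{Total scalar curvature: integration by parts.} Now apply the integration by parts Lemma with $\psi_0=f^{-n-1}$ and $\psi_1=f$:
\[
\int_P f\,(H^{ij}f^{-n-1})_{,ij}\,dx=-2\int_{\partial P}f^{-n}\,d\sigma+\int_P f^{-n-1}\langle H,\Hess f\rangle\,dx .
\]
Substituting this into the integral of the previous display, the interior terms combine as $(n+1)-1=n$. This gives exactly
\[
\Scaltot(\varphi,f)=2\int_{\partial P}f^{-n}\,d\sigma+n\int_P f^{-n-1}\langle H_\varphi,\Hess f\rangle\,dx .
\]

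\textbf{Expected obstacle.} The only delicate point is the normalization of the push-forward measure, namely identifying $\pi_*\bigl(\eta_0\wedge(d\eta_0)^n\bigr)$ with $dx$ (up to a harmless constant factor common to numerator and denominator). This is standard in action-angle coordinates. Everything else is the algebra above, and since $H$ extends smoothly to $P$, the boundary terms are exactly those supplied by the Lemma.
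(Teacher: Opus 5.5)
Your proposal is correct and follows the paper's route. You write $\Scaltot(\varphi,f)=\int_P f^{-n-1}\ScalTW(\varphi,f)\,dx$, substitute the identity $\ScalTW(\varphi,f)=f^{n+2}\scal_\v+(n+1)\langle H,\Hess f\rangle$ from the preceding Lemma, and apply the integration-by-parts Lemma with $\psi_0=f^{-n-1}$, $\psi_1=f$, so the interior coefficients combine to $n$. Your check that $(f^{-1}\eta_0)\wedge(d(f^{-1}\eta_0))^n=f^{-n-1}\eta_0\wedge(d\eta_0)^n$ simply fills in the volume formula, which the paper states without comment (up to the same implicit normalisation constant you flag).
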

Hence, in this setting the CR Einstein--Hilbert functional becomes
\begin{equation} \label{toricEH}
    \EH(\varphi,f)=\frac{2\int_{\partial P}f^{-n}d\sigma +n \int_P f^{-n-1} \langle H, \mbox{Hess} f\rangle dx}{\left(\int_P f^{-n-1} dx\right)^{\frac{n}{n+1}}}
\end{equation}

\begin{remark}
    When $f$ is an affine linear-function, that is when $(D,J_{\phi},f^{-1}\eta)$ is Sasaki (see Remark~\ref{rem-linaffine}), we get 
    \begin{equation}\label{toricEHaff}
        \EH(\varphi,f)=\frac{2\int_{\partial P}f^{-n}d\sigma }{\left(\int_P f^{-n-1} dx\right)^{\frac{n}{n+1}}}
    \end{equation}
    which is independent of the symplectic potential as expected for Sasaki structure see \cite{FutakiOnoWang}. The minimum of the Einstein-Hilbert function on the Sasaki-Reeb vector field is known to exist, see \cite{BoyerHuangLegendre_DH}, and is then
$$\EH_{\min} := \inf_{\substack{f\in \operatorname{Aff}(\kt^*,\bbr),\\ f>0 \mbox{ on } P }} \frac{2\int_{\partial P}f^{-n}d\sigma }{\left(\int_P f^{-n-1} dx\right)^{\frac{n}{n+1}}}$$ in the toric case. 
\end{remark}

As $f>0$, the boundary term in \eqref{toricEH} is always positive.
\begin{corollary}
    If $\ScalTW(\varphi,f)$ is a non-positive constant then $f$ is not convex.    
\end{corollary}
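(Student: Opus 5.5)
The plan is to argue by contradiction, starting from the formula for the total scalar curvature in the preceding Corollary:
\begin{equation}
    \Scaltot(\varphi,f) = 2\int_{\partial P}f^{-n}\,d\sigma + n\int_P f^{-n-1}\langle H_\varphi,\Hess f\rangle\,dx .
\end{equation}
Suppose $\ScalTW(\varphi,f)=C$ is constant with $C\leq 0$, and suppose also that $f$ is convex on $P$.

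First, the left-hand side. Since $\Scaltot(\varphi,f)=\int_P f^{-n-1}\ScalTW(\varphi,f)\,dx = C\int_P f^{-n-1}\,dx$ and $f>0$, we get $\Scaltot(\varphi,f)\leq 0$.

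Next, the right-hand side. Since $d\sigma$ is a positive measure on $\partial P$, nonzero on each facet, and $f>0$ is continuous on $P$, the boundary term $2\int_{\partial P}f^{-n}\,d\sigma$ is strictly positive. For the interior term, $H_\varphi$ is positive definite on $P^\circ$ by Proposition~\ref{propH2F2_aboutH}, being the inverse Hessian of a strictly convex function. If $f$ is convex, $\Hess f$ is positive semidefinite. The trace of a product of a positive definite and a positive semidefinite symmetric matrix is nonnegative, because $\Tr(H\,\Hess f)=\Tr(H^{1/2}\,\Hess f\,H^{1/2})\geq 0$. So the interior integrand $f^{-n-1}\langle H_\varphi,\Hess f\rangle$ is nonnegative, and the interior term is $\geq 0$.

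Hence $\Scaltot(\varphi,f)>0$, contradicting $\Scaltot(\varphi,f)\leq 0$. Therefore $f$ cannot be convex.

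The only point needing care is the integration-by-parts identity behind the Corollary. It needs enough regularity of $f$ and $H$ up to $\partial P$, which holds since $f\in C^\infty(P,\bbr_{>0})$ and $H$ extends smoothly to $P$; the same regularity makes the boundary integral finite and positive. So there is no real obstacle, and the argument in fact shows slightly more: a convex $f$ always gives $\Scaltot(\varphi,f)>0$.
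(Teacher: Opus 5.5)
Your proposal is correct and follows the paper's own reasoning: you write $\Scaltot(\varphi,f)=C\int_P f^{-n-1}dx\le 0$ and compare it with the formula $2\int_{\partial P}f^{-n}d\sigma+n\int_P f^{-n-1}\langle H_\varphi,\Hess f\rangle dx$. For convex $f$ the boundary term is strictly positive and the interior term is nonnegative, which gives the contradiction. This is exactly the observation the paper makes just before stating the corollary.
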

One might wonder if the toric CR Einstein--Hilbert functional is always minimised by a \emph{concave} function $f$, as in this case the polytope integral in the numerator of~\eqref{toricEH} is negative. The discussion in \S\ref{ssCRyam2sphere} shows that this is not the case, in general.

\subsubsection{The convex hull of cscTW structures.} The toric setting makes it possible to apply arguments from convex geometry to the cscTW equation. For example, the proof of \cite[Lemma 5.2]{BHLT25} extends straightforwardly to show the following claim which implies that the convex hull of cscTW contains no cscS. 
\begin{lemma}
    Assume that $(N,D,J,\eta,\xi)$ is a cscS toric manifold and let $f_1, \dots, f_k$ be any smooth $\Torus$-invariant positive functions such that $(N,D,J,f_i^{-1}\eta)$ is cscTW. Then, there exists no positive numbers $(\lambda_1, \dots, \lambda_k)$ such that $\sum_{i=1}^k \lambda_i f_i =1$.     
\end{lemma}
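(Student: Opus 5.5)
The plan is to exploit linearity of the Laplacian and the fact that the only non-linear term in the Tanaka--Webster scalar curvature \eqref{eq:scalTW0} is non-negative. Here the statement must be read with the $f_i$ not all constant: for $k=1$, $f_1\equiv 1$ trivially satisfies the hypotheses. So I will prove that $\sum_i\lambda_i f_i=1$ with $\lambda_i>0$ forces every $f_i$ to be constant.

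\emph{Step 1: sum the equations.} Write $(N,D,J,\eta)=(N,D,J_\varphi,\eta)$ with $\varphi\in\mathcal{S}(P)$, and let $c=\ScalTW(\varphi,1)$, which is constant by the cscS assumption. For each $i$, the cscTW condition and \eqref{eq:scalTW0} give
\begin{equation}
    C_i = c f_i - 2(n+1)\Delta^{g_\varphi} f_i - (n+1)(n+2) f_i^{-1}\abs{df_i}^2_{g_\varphi}
\end{equation}
for some constants $C_i$. I multiply the $i$-th equation by $\lambda_i$ and sum. Since $\sum_i\lambda_i f_i=1$, linearity gives $\sum_i\lambda_i\Delta^{g_\varphi}f_i=\Delta^{g_\varphi}1=0$, hence
\begin{equation}
    \sum_{i=1}^k \lambda_i f_i^{-1} H_\varphi^{ab}(f_i)_{,a}(f_i)_{,b} = \frac{c-\sum_i\lambda_iC_i}{(n+1)(n+2)} =: K
\end{equation}
as functions on $P$. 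In particular, the left-hand side is constant on $P$.

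\emph{Step 2: evaluate at a vertex.} By Proposition~\ref{propH2F2_aboutH}, $H_\varphi$ extends smoothly to $P$. At a vertex $p$ of $P$, where $n$ facets with normals $u_{i_1},\dots,u_{i_n}$ spanning $\kt/\bbr\xi$ meet, the condition $H_p(u_{i_j},\cdot)=0$ for all $j$ forces $H_p=0$. Since $P$ is compact, a vertex exists. Evaluating the constant left-hand side at $p$ therefore gives $K=0$.

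\emph{Step 3: conclude.} Each summand $\lambda_i f_i^{-1}H_\varphi(df_i,df_i)$ is non-negative, because $\lambda_i>0$, $f_i>0$ and $H_\varphi$ is positive semi-definite. Since their sum is $K=0$, every summand vanishes. On $P^\circ$ the matrix $H_\varphi$ is positive definite, so $df_i=0$ on $P^\circ$, and hence each $f_i$ is constant on the connected set $P$. This is the claimed contradiction with the $f_i$ being non-trivial.

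The main (mild) point is justifying Step 2, namely that $H_\varphi$ vanishes at vertices. This follows from the boundary conditions in Proposition~\ref{propH2F2_aboutH} together with the simplicity of the good cone at a vertex, which makes the facet normals there a basis. Everything else is linear algebra and the linearity of $\Delta^{g_\varphi}$.
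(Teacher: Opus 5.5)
Your proof is correct and follows the paper's argument. You sum the $\lambda_i$-weighted cscTW equations so the Laplacian terms cancel, observe that the gradient term vanishes at a vertex, and conclude $df_i\equiv 0$ by positivity; your $H_p=0$ from Proposition~\ref{propH2F2_aboutH} is the polytope version of the paper's remark that $d\mu=0$ over the vertices. Your caveat is also right: as literally stated the lemma fails for constant $f_i$, and what both arguments actually prove is that $\sum_i\lambda_i f_i=1$ with $\lambda_i>0$ forces every $f_i$ to be constant.
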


\begin{proof}
    The argument is exactly the same of \cite[Lemma 5.2]{BHLT25} that we recall for the reader's convenience. Let $P$ be the transversal polytope associated to $\xi$ (this is where we use that $(N,D,J,\eta,\xi)$ is Sasaki not only pseudo-hermitian). Then by $\Torus$ invariance (this is where we use the toric assumption) the functions $f_1,\dots, f_k$ are actually pull-back of functions on $P$, that is $f_m= h_m\circ \mu$ for some $ h_1,\dots, h_m\in C^\infty(P,\bbr_{>0})$. Assume that these functions are like in the claim, so correspond to cscTW structures on the same CR-structure $(N,D,J)=(N,D,J_\phi)$ where $\phi \in \mathcal{S}(P)$. Let $(\lambda_1, \dots, \lambda_k)$ be such that $\sum_{m=1}^k \lambda_m f_m =1$ and assume, for simplicity that $\ScalTW(\phi,1) =1$. Then for $m=1,\dots, k$, $f_m$ satisfies the equation \eqref{eq:scalTW0} with the left hand side being a constant, say $C_m$, 
    \begin{equation}
        C_m= f_m-2(n+1)\Delta^{g_\phi} f_m  - (n+1)(n+2) f_m^{-1} |df_m|^2_{g_\phi}.
    \end{equation}
    Taking the weighted sum of these equations we get 
    \begin{equation}
        \sum_m \lambda_m C_m= 1 -0 - (n+1)(n+2) \sum_m \lambda_m f_m^{-1} |df_m|^2_{g_\phi}.
    \end{equation}
    Each fixed point of $\Torus$ is a common critical point of the functions $f_1,\dots, f_k$ since $d\mu=0$ on these points. Therefore $\sum_m C_m= 1 $ which, in turn, implies that 
    $$\sum_m \frac{\lambda_m}{f_m} |df_m|^2_{g_\phi}\equiv 0$$ on $N$. But for $m=1,\dots, k$, $\lambda_m>0$ so $df_m\equiv 0$.
\end{proof}

\subsection{Variational formulas in the toric setting}

The gradient of the $\EH$ functional is easily computed, and can be found in several standard references, cf.\ \cite{JerisonLee_CRYamabe,EinsteinHilbert-SasakiFutaki}. In this section, we recall the main formulas.

We consider a $\mathcal{C}^1$ path of symplectic potentials $\phi_t \in \calS(P,\ell_1,\dots,\ell_d)$ and positive functions $f_t \in \mathcal{C}^\infty(P,\bbr_{>0})$, where $t$ is a real variable, taking values in some interval of $\bbr$ containing $0$.  For ease of notation, we let
\[
    G_t \coloneqq \Hess(\phi_t),\;\; H_t \coloneqq \Hess(\phi_t)^{-1}
\]
\[
    \scal_t \coloneqq \ScalTW(\varphi_t,f_t), \;\;
    \Scaltot_t \coloneqq \Scaltot(\varphi_t,f_t) \; \mbox{ and } \;
    \Voltot_t \coloneqq \Voltot(f_t).
\]
Using the fact that the only term in $\EH(\phi_t,f_t)$ depending on $\phi_t$ is the second one (see \eqref{toricEH}), namely $n \int_P f^{-n-1} \langle H, \mbox{Hess} f\rangle dx$, and the relation
\[
    \frac{d}{dt} H_t = -H_t \left(\frac{d}{dt}G_t \right) H_t,
\]
we get
\begin{equation}
\begin{split}
    \frac{d}{dt} \EH(\phi_t,f_t) = & \frac{\partial}{\partial(f_t)} \EH(\phi_t,f_t) + \frac{\partial}{\partial(\phi_t)} \EH(\phi_t,f_t) \\
    = & \frac{-n}{\bfV_t^{n/n+1}}\int_P \frac{\dot{f}_t}{f_t^{n+2}}\left(\scal_t -\frac{\bfS_t}{\bfV_t}\right) dx \\
    & + \frac{-n}{\bfV_t^{n/n+1}} \int_P f^{-1-n}\langle H_t \Hess \dot{\phi}_t,H_t \Hess f_t\rangle dx.
\end{split}
\end{equation}
In particular, we obtain the following
\begin{theorem}\label{theocritEH=cscS}[\cite{AfeltraMalchiodi_EH,LahdiliLegendreScarpa_EHDF}]
    The critical points of $\EH(\varphi,f)$ are characterised by the conditions
    \begin{equation}
    \begin{cases}
        \ScalTW(\varphi,f)=\text{const.}\\
        \Hess(f) = 0.
    \end{cases}
    \end{equation}
    In other words, the critical points are Sasaki structures of constant transversal scalar curvature.
\end{theorem}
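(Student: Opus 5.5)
Our plan is to set both partial derivatives of $\EH$ in the variational formula above equal to zero along arbitrary variations, one variable at a time. The $\phi$-variation will force $\Hess f=0$, and the $f$-variation will then give constant scalar curvature.

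\emph{Variation in $f$.} Fix $\phi_t\equiv\phi$ and take $f_t=f+t\dot f$ with $\dot f\in C^\infty(P,\bbr)$ arbitrary. At a critical point,
\[
\int_P \dot f\, f^{-n-2}\Bigl(\ScalTW(\phi,f)-\tfrac{\Scaltot}{\Voltot}\Bigr)dx=0 .
\]
Since $\dot f$ is arbitrary and $f^{-n-2}>0$, we get $\ScalTW(\phi,f)=\Scaltot/\Voltot$, which is a constant. Conversely, constancy of $\ScalTW$ forces this constant to equal $\Scaltot/\Voltot$, because $\Scaltot=\int_P f^{-n-1}\ScalTW\,dx$. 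So the first condition is exactly the vanishing of the $f$-derivative.

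\emph{Variation in $\phi$.} Fix $f$ and take $\phi_t=\phi+t\psi$. Here $\psi\in C^\infty(P)$ is arbitrary; for small $|t|$, $\phi_t$ stays in $\calS(P)$, since adding a function smooth up to the boundary preserves the boundary behaviour and strict convexity on compact sets. At a critical point,
\[
\int_P f^{-n-1}\,\Tr\bigl(H\,\Hess\psi\, H\,\Hess f\bigr)\,dx=0\qquad\forall \psi .
\]
First choose $\psi$ compactly supported in $P^\circ$. Integrating by parts twice shows that the symmetric-matrix-valued function $A:=f^{-n-1}H(\Hess f)H$ satisfies $\sum_{ij}(A_{ij})_{,ij}=0$ in the distributional sense. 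This condition alone does not force $A=0$, so we need more test functions.

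\emph{Main obstacle: exploiting boundary test functions.} The key step is to also use $\psi$ that do not vanish at $\partial P$. The Donaldson integration by parts lemma above, with its boundary measure $d\sigma$, supplies the boundary terms. I expect these to give $A(u_k,\cdot)$-type conditions on the facets; these hold automatically, since $H(u_k,\cdot)=0$ there. This will probably not suffice either.

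Instead I will choose test functions adapted to $f$ itself. The natural choice is to take $\psi=f$ up to a suitable correction. Pairing the formula with $\Hess\psi=G\,B\,G$, for $B$ a symmetric matrix field, formally turns it into $\int f^{-n-1}\Tr(B\Hess f)=0$. The difficulty is that $GBG$ must be a Hessian. So I would try $\psi=\phi\cdot g$-type choices, or more simply $\psi=f$. With $\psi=f$ this gives
\[
\int_P f^{-n-1}\,\abs{H^{1/2}\Hess f\,H^{1/2}}^2\,dx=0 ,
\]
a sum of squares, since $\Tr(H\,\Hess f\,H\,\Hess f)=\Tr\bigl((H^{1/2}\Hess f\,H^{1/2})^2\bigr)\ge0$. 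Because $H>0$ on $P^\circ$, this gives $\Hess f=0$.

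This is the decisive step, and it is cleaner than the distributional argument: we only need $\psi=f$ to be an admissible variation, which holds because $f$ is smooth on $P$.

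\emph{Conclusion.} Thus $f$ is affine-linear. By Remark~\ref{rem-linaffine}, $f^{-1}\eta$ is then Sasaki, with constant Tanaka--Webster (equal to transversal) scalar curvature, i.e.\ cscS.

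For the converse, if $\Hess f=0$ the $\phi$-derivative vanishes identically. If moreover $\ScalTW$ is constant, the $f$-derivative vanishes by the first step.
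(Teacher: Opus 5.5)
Your proposal is correct and follows the paper, which reads the theorem off the first-variation formula. Varying $f$ gives $\ScalTW(\varphi,f)=\Scaltot/\Voltot$, and your choice $\dot\varphi=f$ turns the $\varphi$-derivative into $\int_P f^{-n-1}\Tr\bigl((H\,\Hess f)^2\bigr)dx=0$, forcing $\Hess f=0$; the detours through compactly supported and boundary test functions can be dropped. One small fix: $\varphi+tf\in\mathcal{S}(P)$ for small $\abs{t}$ does not follow from strict convexity on compact sets. It follows from the uniform bound $\Hess\varphi\geq c\,\mathrm{Id}$ on $P^\circ$, which holds because $H=(\Hess\varphi)^{-1}$ extends smoothly, hence boundedly, to $P$ (Proposition~\ref{propH2F2_aboutH}).
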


\section{The toric CR Yamabe energy}\label{sec:theorem1.1}

In this Section, we prove Theorem~\ref{thm:boundary_behaviour}. Along the way, we will identify another functional that governs the behaviour of $\CRYam^T$ at the ``boundary at infinity'' of $\mathcal{S}(P)$. Theorem~\ref{thm:boundary_behaviour} consists of two parts, since we have to analyse the behaviour of $\CRYam^\Torus$ along two types of paths in $\mathcal{S}(P)$. See Figure~\ref{fig:picture_symplpot} for reference and for the sketch of the proof. Recall that $\EH(\varphi,f)$ is invariant under the addition of an affine-linear function to $\varphi$, hence $\CRYam^\Torus$ is invariant under this action as well; hence, in this Section we fix a point $o\in P^\circ$ and work with normalised symplectic potentials $\varphi\in\mathcal{S}_o(P)$.

To give an estimate for the CR Yamabe invariant, we evaluate $\EH$ on appropriate test functions. It is important to remark that, for a smooth symplectic potential~$\varphi$, $\EH(\varphi,f)$ is well-defined for any continuous, positive $f$ such that $\Hess f$ is a bounded distribution on $P$. Moreover, in the definition of $\CRYam^{\Torus}$, one can take the infimum over the set of all such functions, not necessarily smooth ones.

\begin{figure}[t]
\centering
\resizebox{0.8\textwidth}{!}{%
\begin{circuitikz}
\tikzstyle{every node}=[font=\fontsize{11.9pt}{15.5pt}\selectfont]
\draw [short] (12.25,18.875) -- (9.25,15);
\draw [short] (9.25,15) -- (9.25,9.625);
\draw [short] (9.25,9.625) -- (20.000,9.625);
\draw [dashed] (14.125,18.375) -- (11.625,15);
\draw [dashed] (11.625,15) .. controls (11,14.125) and (10.875,14) .. (11,13);
\draw [dashed] (11,13.25) -- (11,12.25);
\draw [dashed] (11,12.375) .. controls (11.125,11.125) and (11.625,11.375) .. (12.5,11.375);
\draw [dashed] (12.375,11.375) -- (21,11.375);
\node [font=\fontsize{27.9pt}{36.2pt}\selectfont, fill={rgb,255:red,255; green,255; blue,255}, fill opacity=1, text opacity=1, inner xsep=0.080cm, inner ysep=0.085cm, rounded corners=0.020cm] at (13.75,14.375) {$.$};
\node [font=\fontsize{11.9pt}{15.5pt}\selectfont, fill={rgb,255:red,255; green,255; blue,255}, fill opacity=1, text opacity=1, inner xsep=0.080cm, inner ysep=0.085cm, rounded corners=0.020cm] at (14.125,14.375) {$\varphi_0$};
\node [font=\fontsize{11.9pt}{15.5pt}\selectfont, fill={rgb,255:red,255; green,255; blue,255}, fill opacity=1, text opacity=1, inner xsep=0.080cm, inner ysep=0.085cm, rounded corners=0.020cm] at (18.000,15.000) {$\mathcal{S}(P)$};
\node [font=\fontsize{11.9pt}{15.5pt}\selectfont, fill={rgb,255:red,255; green,255; blue,255}, fill opacity=1, text opacity=1, inner xsep=0.080cm, inner ysep=0.085cm, rounded corners=0.020cm] at (22.125,16.000) {$\CRYam\leq 0$};
\node [font=\fontsize{11.9pt}{15.5pt}\selectfont, fill={rgb,255:red,255; green,255; blue,255}, fill opacity=1, text opacity=1, inner xsep=0.080cm, inner ysep=0.085cm, rounded corners=0.020cm] at (14.125,16.000) {$\CRYam\geq 0$};
\node [font=\fontsize{11.9pt}{15.5pt}\selectfont, fill={rgb,255:red,255; green,255; blue,255}, fill opacity=1, text opacity=1, inner xsep=0.080cm, inner ysep=0.085cm, rounded corners=0.020cm] at (10.75,10.50) {$\CRYam<0$};
\draw [line width=0.5pt, -{Stealth[scale=1.5]}, ] (13.75,14) -- (14.375,9.75)node[pos=0.45,right, fill={rgb,255:red,255; green,255; blue,255}, fill opacity=1, text opacity=1, inner xsep=0.080cm, inner ysep=0.085cm, rounded corners=0.020cm]{$\varphi_0+t\varphi_\infty$};
\node [font=\fontsize{26.7pt}{34.8pt}\selectfont, color={rgb,255:red,255; green,0; blue,0}, text opacity=1, , fill={rgb,255:red,255; green,255; blue,255}, fill opacity=1, text opacity=1, inner xsep=0.080cm, inner ysep=0.085cm, rounded corners=0.020cm] at (14.375,9.550) {$.$};
\draw [ line width=0.6pt ] (14.625,13.125) circle (7.25cm);
\node [font=\fontsize{11.9pt}{15.5pt}\selectfont, fill={rgb,255:red,255; green,255; blue,255}, fill opacity=1, text opacity=1, inner xsep=0.080cm, inner ysep=0.085cm, rounded corners=0.020cm] at (9.375,8.5) {$\mathcal{C}^{\infty}(P)$};
\node [font=\fontsize{27.9pt}{36.2pt}\selectfont, fill={rgb,255:red,255; green,255; blue,255}, fill opacity=1, text opacity=1, inner xsep=0.080cm, inner ysep=0.085cm, rounded corners=0.020cm] at (14.950,5.900) {$.$};
\node [font=\fontsize{11.9pt}{15.5pt}\selectfont, fill={rgb,255:red,255; green,255; blue,255}, fill opacity=1, text opacity=1, inner xsep=0.080cm, inner ysep=0.085cm, rounded corners=0.020cm] at (15.25,5.25) {$\varphi_{\infty}$ not convex};
\node [font=\fontsize{11.9pt}{15.5pt}\selectfont, fill={rgb,255:red,255; green,255; blue,255}, fill opacity=1, text opacity=1, inner xsep=0.080cm, inner ysep=0.085cm, rounded corners=0.020cm] at (21.050,18.400) {$\varphi_{\infty}$ strictly convex};
\node [font=\fontsize{27.9pt}{36.2pt}\selectfont, fill={rgb,255:red,255; green,255; blue,255}, fill opacity=1, text opacity=1, inner xsep=0.080cm, inner ysep=0.085cm, rounded corners=0.020cm] at (19.940,18) {$.$};
\node [font=\fontsize{11.9pt}{15.5pt}\selectfont, fill={rgb,255:red,255; green,255; blue,255}, fill opacity=1, text opacity=1, inner xsep=0.080cm, inner ysep=0.085cm, rounded corners=0.020cm] at (14.375,9.25) {$\varphi_T$};
\node [font=\fontsize{11.9pt}{15.5pt}\selectfont, fill={rgb,255:red,255; green,255; blue,255}, fill opacity=1, text opacity=1, inner xsep=0.080cm, inner ysep=0.085cm, rounded corners=0.020cm] at (17.000,9.250) {$\CRYam=-\infty$};
\node [font=\fontsize{11.9pt}{15.5pt}\selectfont, fill={rgb,255:red,255; green,255; blue,255}, fill opacity=1, text opacity=1, inner xsep=0.080cm, inner ysep=0.085cm, rounded corners=0.020cm] at (18.000,11.000) {$\CRYam=0$};
\draw [line width=0.6pt, dashed] (14.5,8.875) -- (14.875,6.375);
\end{circuitikz}
}%
\caption{\small Paths in the space of symplectic potentials $\mathcal{S}(P)\subset\mathcal{C}^\infty(P^\circ)$ and behaviour of the CR Yamabe function.}
\label{fig:picture_symplpot}
\end{figure}

\noindent \textbf{Proof of Theorem~\ref{thm:boundary_behaviour}, case 2.} \quad Assume that $\varphi_t$ is a path of normalised symplectic potentials for $t<T$, and that there exists $x_T\in P^{\circ}$ for which $(\Hess\varphi_T)(x_T)$ is not invertible. As $\varphi_t$ is a path of symplectic potentials, for every $t<T$ the matrix $\Hess\varphi_t$ is positive definite. This means that $(\Hess\varphi_T)(x_T)$ has an eigenvector $\nu$ of eigenvalue zero. We can choose a system of coordinates in $\mathbb{R}^n$ such that $\nu=e_1$ and $x_T=0\in\mathbb{R}^n$. Let then $f_{pl}$ be the piecewise-linear function
\begin{equation}\label{eq:piecewise_test}
    f_{pl} = \begin{cases}
                m_1 x_1 + a & \text{if }x\leq 0\\
                m_2 x_1 + a & \text{if }x\geq 0,
            \end{cases}
\end{equation}
and we compute the total scalar curvature as follows: for simplicity, let $G_t\coloneqq\Hess\varphi_t$ and $H_t\coloneqq G_t^{-1}$.
\begin{equation}\label{eq:scaltot_piecewiselin}
\begin{split}
    \Scaltot(f_{pl},\varphi_t) =& 2\int_{\partial P}f_{pl}^{-n}d\sigma + \int_P f_{pl}^{-n-1} \langle H_t,\Hess f_{pl}\rangle d\mu\\
    =& 2\int_{\partial P}f_{pl}^{-n}d\sigma + \frac{m_2-m_1}{a^{n+1}}\int_{P\cap\{x_1=0\}} H^{11}_t d\mu.
\end{split}
\end{equation}
We choose $m_2<m_1$, so that $f_{pl}$ is concave and the second term is negative. At this point, we would like to show that as $t\to T$, the integral in the second term becomes arbitrarily large. We know that as $t\to T$, $\langle e_1,G_t(0) e_1\rangle\to 0^+$, and as $H$ is the inverse of $G$,
\begin{equation}\label{eq:H11_div}
    H^{11}_t(0) = \langle e_1,H_t(0) e_1\rangle \xrightarrow{t\to T} + \infty.
\end{equation}
If $\dim N=3$, so that $\dim P=1$, this is enough to conclude that the total scalar curvature diverges to $-\infty$ as $t\to T$. As the test function $f_{pl}$ does not depend on $t$, we deduce that in this case
\begin{equation}
    \lim_{t\to T^-}\EH(f_{pl},\varphi_t)=-\infty.
\end{equation}

However, if $\dim P>1$, equation \eqref{eq:H11_div} does not necessarily mean that the integral
\begin{equation}
    \int_{P\cap\{x_1=0\}} H^{11}_t d\mu
\end{equation}
diverges as $t\to T$. For example, one has to exclude the possibility that
\begin{equation}
    H^{11}_t(x)=\frac{T-t}{(T-t)^{1+\varepsilon}+\lvert x\rvert^2},
\end{equation}
In which case the integral of $H^{11}_t$ vanishes (for $\varepsilon<1$), as $t\to T^-$. Part $(2)$ of Theorem~\ref{thm:boundary_behaviour} is a consequence of the following.

\begin{lemma}\label{lemma:Gv_small}
    Suppose that $\varphi_t$ is a path of normalised symplectic potentials, and that there exist a vector $v_t$ of unit norm, $\delta>0$ and $r>0$ such that
    \begin{equation}\label{eq:Gv_small}
        \langle v_t, G_t(x)v_t \rangle < \delta
    \end{equation}
    for every $x\in \{\ell_t =0\} \cap B_r(x_0)$, where $\ell_t(x)\coloneqq\langle x-x_0 ,v_t\rangle$ and $B_r(x_0)$ is a ball of radius $r$ inside $P$. Consider the test functions
    \begin{equation}
        f_t(x) \coloneqq \begin{cases}
            m_1\langle x,v_t\rangle - a_t +c_t &\text{ if } \ell_t(x) \leq 0\\
            m_2\langle x,v_t\rangle + a_t +c_t &\text{ if } \ell_t(x) \geq 0,
        \end{cases} \quad a_t\coloneqq \frac{1}{2}(m_2-m_1)\langle x_0,v_t\rangle,
    \end{equation}
    where $c_t$ is a constant, chosen so that $f_t>0$ on $P$. Then, if we fix $m_2<m_1$ we can choose $c_t$ to be constant in $t$, and there exists a positive constant $k$, not depending on $t$ or $\delta$, such that
    \begin{equation}
        \int_P f_t^{-n-1}\langle H_t, \Hess f_t\rangle dx \leq 
        (m_2-m_1)k\,\delta^{-1}\operatorname{Vol}(\{\ell_t=0\}\cap B_r(x_0)).
    \end{equation}
\end{lemma}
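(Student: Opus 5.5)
The plan is to compute the distributional Hessian of the piecewise-linear test function $f_t$ explicitly, which reduces the interior integral to an integral over the hyperplane $\{\ell_t=0\}\cap P$. On that hyperplane we then bound $f_t^{-n-1}$ below by a $t$-independent constant and $\langle v_t,H_tv_t\rangle$ below by $\delta^{-1}$.

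\emph{Step 1 (continuity and the choice of $c_t$).} First check that the constant $a_t$ makes the two affine pieces agree on $\{\ell_t=0\}$, i.e.\ on $\{\langle x,v_t\rangle=\langle x_0,v_t\rangle\}$. This is a one-line verification, possibly after fixing the sign convention for $a_t$. So $f_t$ is continuous and convex-or-concave across the hyperplane. Next set $R\coloneqq\max_{x\in P}\abs{x}$, which is finite because $P$ is compact. Since $\abs{v_t}=1$ we have $\abs{\langle x,v_t\rangle}\leq R$ on $P$ and $\abs{a_t}\leq \tfrac12\abs{m_2-m_1}R$. Hence the constant
\[
c\coloneqq 2(\abs{m_1}+\abs{m_2})R+1
\]
gives $1\leq f_t\leq K$ on $P$, with $K\coloneqq 4(\abs{m_1}+\abs{m_2})R+1$. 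In particular $c_t=c$ can be taken independent of $t$, and $f_t^{-n-1}\geq K^{-n-1}$ on $P$.

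\emph{Step 2 (distributional Hessian).} Since $f_t$ is affine on each side of the hyperplane, with gradient jumping from $m_1v_t$ to $m_2v_t$, we get
\[
\Hess f_t=(m_2-m_1)\,v_t\otimes v_t\;\sigma_{\{\ell_t=0\}},
\]
where $\sigma$ is the $(n-1)$-dimensional Lebesgue measure on the hyperplane. This uses that $v_t$ is a unit normal to the hyperplane. The formula is the same computation as in \eqref{eq:scaltot_piecewiselin}. It gives
\[
\int_P f_t^{-n-1}\langle H_t,\Hess f_t\rangle dx=(m_2-m_1)\int_{\{\ell_t=0\}\cap P} f_t^{-n-1}\langle v_t,H_tv_t\rangle\, d\sigma .
\]
Here $H_t$ is continuous on $P$ (Proposition~\ref{propH2F2_aboutH}) and positive semidefinite, so the integrand is nonnegative. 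Since $m_2-m_1<0$, we may shrink the domain of integration to $\{\ell_t=0\}\cap B_r(x_0)$ and only increase the right-hand side.

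\emph{Step 3 (lower bound on $H$).} At each point of $\{\ell_t=0\}\cap B_r(x_0)$, apply Cauchy--Schwarz to the positive definite $G_t$:
\[
1=\langle v_t,v_t\rangle^2=\langle G_t^{1/2}v_t,G_t^{-1/2}v_t\rangle^2\leq \langle v_t,G_tv_t\rangle\langle v_t,H_tv_t\rangle .
\]
Combined with \eqref{eq:Gv_small}, this yields $\langle v_t,H_tv_t\rangle>\delta^{-1}$. Putting the three steps together gives the claimed bound with $k=K^{-n-1}$, which depends only on $m_1,m_2$ and $P$.

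The main point needing care is Step 2: justifying that $\EH$, and in particular the integration by parts formula behind \eqref{toricEH}, applies to the merely Lipschitz $f_t$ with measure-valued Hessian. As remarked before the proof, $\EH(\varphi,f)$ makes sense for continuous $f$ with bounded distributional Hessian. Concretely, I would handle this by mollifying $f_t$ in the direction $v_t$ and passing to the limit, using the continuity of $H_t$ up to $\partial P$.
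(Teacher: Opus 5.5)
Your proposal is correct and follows the paper's proof essentially step by step. The steps are the same: the distributional Hessian $(m_2-m_1)\,v_t\otimes v_t$ concentrated on the crease, restriction to $\{\ell_t=0\}\cap B_r(x_0)$ using $m_2<m_1$ and $H_t\geq 0$, the Cauchy--Schwarz bound $\langle v_t,H_tv_t\rangle\geq\langle v_t,G_tv_t\rangle^{-1}>\delta^{-1}$, and a $t$-independent $c_t$ coming from $\abs{\langle x,v_t\rangle}\leq\max_P\abs{x}$. The differences are cosmetic: the paper factors out the exact constant value $\tfrac12(m_1+m_2)\langle x_0,v_t\rangle+c_t$ of $f_t$ on the crease rather than using your uniform bound $f_t^{-n-1}\geq K^{-n-1}$, and your suspicion about the sign is justified, since continuity across the crease requires $a_t=\tfrac12(m_1-m_2)\langle x_0,v_t\rangle$.
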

Note that the result can be improved somewhat, for example \eqref{eq:Gv_small} could be substituted by an integral inequality rather than a pointwise one.
\begin{proof}
    The value of $f_t$ along the ``crease'' $\{\ell_t=0\}$ is $\frac{1}{2}(m_1+m_2)\langle x_0,v_t\rangle+c_t$, while its Hessian is (since $v_t$ has unit norm)
    \begin{equation}
        \Hess f_t = (m_2-m_1) \delta_{\{\ell_t=0\}\cap P} \, v_t \otimes v_t^\transpose.
    \end{equation}
    Hence, for the integral we obtain
    \begin{equation}
        \int_P f_t^{-n-1} \langle H_t,\Hess f_t\rangle dx = \frac{(m_2-m_1)}{\left(\frac{1}{2}(m_1+m_2)\langle x_0,v_t\rangle+c_t\right)^{n+1}} \int_{\{\ell_t=0\}} v_t^{\transpose} H_t v_t\, dx.
    \end{equation}
    As $H_t$ is positive-definite in $P^\circ$, we can restrict the integral to the ball to obtain
    \begin{equation}
        \int_{\{\ell_t=0\}\cap P} v_t^{\transpose} H_t v_t\, dx \geq \int_{\{\ell_t=0\}\cap B_r(x_0)} v_t^{\transpose} H_t v_t\, dx.
    \end{equation}
    Now, we claim that, for every $x\in B_{x_0}(r)$,
    \begin{equation}\label{eq:vHv}
        v_t^\transpose H_t(x) v_t > \delta^{-1}.
    \end{equation}
    Assuming \eqref{eq:vHv} for the moment, one gets
    \begin{equation}
        \int_{\{\ell_t=0\}\cap P} v_t^{\transpose} H_t v_t\, dx \geq \delta^{-1} \operatorname{Vol}\left(\{\ell_t=0\}\cap B_r(x_0)\right).
    \end{equation}
    Hence, if $m_2-m_1<0$ we conclude that
    \begin{equation}
        \int_P f_t^{-n-1} \langle H_t,\Hess f_t\rangle dx \leq \frac{(m_2-m_1)\delta^{-1}}{\left(\frac{m_1+m_2}{2}\langle x_0,v_t\rangle+c_t\right)^{n+1}} \operatorname{Vol}(\{\ell_t=0\}\cap B_r(x_0)).
    \end{equation}
    To conclude, one just has to note that $\langle x,v_t\rangle$ is uniformly bounded in $t$, for $x\in P$: indeed,
    \(
        \abs*{\langle x,v_t\rangle}^2 \leq \norm{x}^2 
    \)
    since $v_t$ has unit norm. As $P$ is bounded, we can choose $c_t$ independently of $t>t_\delta$ in such a way that $f_t>1$ on $P$, for example. The resulting family of functions is $t$-uniformly bounded above on $P$ (for fixed $m_1,m_2$).

    \smallskip

    To show \eqref{eq:vHv}, let $A$ be any positive-definite symmetric matrix and let $w$ be any vector. By Cauchy-Schwarz,
    \begin{equation}
        \norm{w}^2 = \langle A^{1/2} w, A^{-1/2} w \rangle \leq \norm{A^{1/2} w} \norm{A^{-1/2} w} = (w^\transpose A w)^{1/2} (w^\transpose A^{-1} w)^{1/2},
    \end{equation}
    and in particular if $w$ has unit norm, $ w^\transpose A^{-1} w \geq (w^\transpose A w)^{-1} $. Applying this expression to $w=v_t$, $A=G_t$, we find, in $B_r(x_0)$,
    \begin{equation}
        v_t^\transpose H_t v_t \geq \left(v_t^\transpose G_t v_t\right)^{-1} > \delta^{-1}.  \qedhere
    \end{equation}
\end{proof}

\begin{corollary}
    Suppose that $\varphi_t$ is a smooth path of normalised symplectic potentials for $t<T$, and that there exist a vector $v_T$ of unit norm such that $\langle v_T, G_T(x)v_T \rangle$ vanishes to order $n-1$ near $x_0$ along $\{\ell_T=0\}$, i.e.\ there there exists $C>0$ such that
    \begin{equation}
        \langle v_T,G_T(x)v_T\rangle < C\abs{x-x_0}^{\alpha},\quad \alpha > n-1
    \end{equation}
    for every $x\in \{\ell_T=0\}\cap B_R(x_0)\subset P$. Then, with the same notation of Lemma~\ref{lemma:Gv_small},
    \begin{equation}
        \int_P f_t^{-n-1}\langle H_t, \Hess f_t\rangle dx \to -\infty\text{ as }t\to T.
    \end{equation}
\end{corollary}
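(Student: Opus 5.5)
Take $v_t\equiv v_T$ and $x_0$ as in the statement, so that $\ell_t=\ell_T$ is independent of $t$ and the test functions $f_t$ of Lemma~\ref{lemma:Gv_small} form a single fixed piecewise-linear function with $m_2<m_1$. Then $\Hess f_t=(m_2-m_1)\,\delta_{\{\ell_T=0\}}\,v_T\otimes v_T^\transpose$, and the integral equals a fixed negative constant times $\int_{\{\ell_T=0\}\cap P} v_T^\transpose H_t v_T\,d\mu$. It therefore suffices to show that this hyperplane integral tends to $+\infty$ as $t\to T^-$. The point is that we apply the Cauchy--Schwarz bound $v^\transpose H v\ge (v^\transpose G v)^{-1}$ from the proof of Lemma~\ref{lemma:Gv_small} pointwise, rather than the uniform $\delta$-version.

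The bound gives, on $\{\ell_T=0\}\cap B_R(x_0)$,
\begin{equation}
    v_T^\transpose H_t(x) v_T \geq \bigl(v_T^\transpose G_t(x) v_T\bigr)^{-1}.
\end{equation}
Since $\varphi_t\to\varphi_T$ smoothly, $G_t\to G_T$ uniformly on $\overline{B_R(x_0)}$. Hence for every $x$ in the slice, $v_T^\transpose G_t(x)v_T \to v_T^\transpose G_T(x) v_T < C\abs{x-x_0}^{\alpha}$. Fatou's lemma, with nonnegative integrands, then yields
\begin{equation}
    \liminf_{t\to T^-}\int_{\{\ell_T=0\}\cap B_R(x_0)} v_T^\transpose H_t v_T \,d\mu \;\geq\; \int_{\{\ell_T=0\}\cap B_R(x_0)} \frac{d\mu}{C\abs{x-x_0}^{\alpha}}.
\end{equation}
The slice is an $(n-1)$-dimensional disc centred at $x_0$. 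In polar coordinates the right-hand side is a constant times $\int_0^R \rho^{n-2-\alpha}\,d\rho$, which diverges exactly when $\alpha\ge n-1$, and in particular under the hypothesis $\alpha>n-1$.

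The remaining bookkeeping follows the proof of Lemma~\ref{lemma:Gv_small}. The part of the integral over $P\setminus B_R(x_0)$ is nonnegative because $H_t$ is positive semidefinite. The prefactor $(m_2-m_1)\big/\bigl(\tfrac{1}{2}(m_1+m_2)\langle x_0,v_T\rangle+c\bigr)^{n+1}$ is a fixed negative number independent of $t$. Together these give divergence to $-\infty$.

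The main subtlety I expect is justifying the pointwise convergence and the positivity of the integrand near $x_0$, where $G_T$ degenerates. Fatou only needs the pointwise liminf of $(v^\transpose G_t v)^{-1}$, and that is $\geq (v^\transpose G_T v)^{-1}$ wherever the limit is positive, or $+\infty$ where it vanishes. So continuity of $t\mapsto G_t(x)$ up to $T$ is enough. A cruder alternative is to apply Lemma~\ref{lemma:Gv_small} on shrinking annuli and sum, but Fatou is cleaner.
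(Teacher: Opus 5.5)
Your proof is correct, and it bounds the slice integral differently from the paper. The paper uses Lemma~\ref{lemma:Gv_small} at a single, $\delta$-dependent scale. For each $\delta>0$ it takes $r_\delta\sim\delta^{1/\alpha}$, so that $\langle v_t,G_tv_t\rangle<\delta$ on $\{\ell_t=0\}\cap B_{r_\delta}(x_0)$ for $t$ close to $T$; this implicitly uses uniform convergence of $G_t$ to $G_T$ near $x_0$. It then reads off the bound $(m_2-m_1)k\,\delta^{-1}r_\delta^{n-1}\sim(m_2-m_1)k\,\delta^{\frac{n-1}{\alpha}-1}\to-\infty$.

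You instead apply the Cauchy--Schwarz inequality pointwise on the whole fixed slice and use Fatou's lemma to pass to the limit. Everything then reduces to the non-integrability of $\abs{y}^{-\alpha}$ on an $(n-1)$-dimensional disc. What your route buys:
\begin{itemize}
\item It needs only pointwise convergence of $G_t(x)$ on the slice.
\item It is slightly sharper, since it also covers the borderline exponent $\alpha=n-1$. There the divergence is logarithmic and comes from integrating over all scales. The paper's single-ball quantity $\delta^{\frac{n-1}{\alpha}-1}=\delta^{0}$ stays bounded in that case.
\end{itemize}

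What the paper's formulation buys in return is flexibility in the direction. Your reduction to a fixed slice and a $t$-independent test function relies on the choice $v_t\equiv v_T$. That choice is natural, since the hypothesis only provides $v_T$. The uniform-$\delta$ estimate, by contrast, adapts directly to directions $v_t\to v_T$ with moving creases, where Fatou would require reparametrising the slices.

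For $n=1$ your polar-coordinate step degenerates. Fatou at the single point $x_0$ still gives $v_T^\transpose H_t(x_0)v_T\to+\infty$, which matches the paper's separate treatment of $\dim P=1$.
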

\begin{proof}
    As $\langle v_T,G_T(x_0)v_T\rangle=0$, for every $\delta>0$ there exists $t_\delta<T$ and $r_\delta >0$ such that
    \begin{equation}
        0< \langle v_t,G_t(x)v_t\rangle <\delta
    \end{equation}
    for every $t$ between $t_\delta$ and $T$, and every $x\in \{\ell_t=0\}\cap B_{r_\delta}(x_0)$. By Lemma~\ref{lemma:Gv_small} then it follows that
    \begin{equation}\label{eq:accessoryineq}
        \int_P f_t^{-n-1}\langle H_t, \Hess f_t\rangle dx \leq 
        (m_2-m_1)k\,\delta^{-1} r_\delta^{n-1},
    \end{equation}
    where $k$ is a constant, not depending on $\delta$ nor on $t$. Under our assumptions, we can simply choose $r_\delta\sim \delta^{1/\alpha}$. In that case, \eqref{eq:accessoryineq} gives us
    \begin{equation}
        \int_P f_t^{-n-1}\langle H_t, \Hess f_t\rangle dx \sim  
        (m_2-m_1)k\,\delta^{\frac{n-1}{\alpha}-1}
    \end{equation}
    which goes to $-\infty$ for $\delta\to 0$.
\end{proof}

\noindent \textbf{Proof of Theorem~\ref{thm:boundary_behaviour}, case 1.} \quad We now turn to the behaviour of the CR Yamabe functional along paths $\varphi_\infty$ that go towards the ``boundary at infinity'' in the space of normalised symplectic potentials. More precisely, let $\varphi_\infty\not\equiv 0$ be a smooth, convex function, normalised so that $\varphi_\infty(o)=d\varphi_\infty(o)=0$. We consider a path $\varphi_t\in\mathcal{S}_o(P)$ such that
\begin{equation}\label{eq:asympt_path}
    \varphi_t = t^k\varphi_\infty+\text{ lower order terms}
\end{equation}
for some $k\in\bbn$. As $\Hess\varphi_\infty\geq 0$, these paths stay in the space of normalised symplectic potentials for arbitrarily large values of $t$.

\begin{proposition}\label{prop:infinitypath}
    For any path $\varphi_t\in\mathcal{S}_o(P)$ as in \eqref{eq:asympt_path} such that $\varphi_\infty$ is strictly convex,
    \begin{equation}
        \lim_{t\to +\infty}\CRYam(\varphi_t)\leq 0.
    \end{equation}
\end{proposition}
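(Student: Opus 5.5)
Since $\CRYam^\Torus(\varphi_t)$ is an infimum over test functions, it suffices to produce, for each $\varepsilon>0$, test functions $f_t$ with $\limsup_{t\to\infty}\EH(\varphi_t,f_t)\leq \varepsilon$. I would use the expression \eqref{toricEH}: the denominator depends only on $f$, and $\varphi_t$ enters only through $n\int_P f^{-n-1}\langle H_t,\Hess f\rangle dx$. Along the path \eqref{eq:asympt_path} we have $G_t=\Hess\varphi_t=t^k\Hess\varphi_\infty+O(t^{k-1})$. Strict convexity of $\varphi_\infty$ gives $\Hess\varphi_\infty\geq c>0$ on compact subsets $K\subset P^\circ$. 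Hence $H_t=G_t^{-1}=t^{-k}(\Hess\varphi_\infty)^{-1}+O(t^{-k-1})\to 0$ uniformly on $K$. Near $\partial P$ we cannot hope for uniform decay, since $H_t$ is pinned by the boundary conditions of Proposition~\ref{propH2F2_aboutH}. So the test functions should have Hessian supported away from $\partial P$.

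\textbf{Step 1.} Fix $K\subset P^\circ$ compact and take $f$ with $\operatorname{supp}\Hess f\subset K$, for instance smooth and equal to an affine function (or a constant) on a neighbourhood of $\partial P$. Then $\int_P f^{-n-1}\langle H_t,\Hess f\rangle dx\to 0$ as $t\to\infty$ by uniform convergence on $K$. Therefore
\[
\lim_{t\to\infty}\EH(\varphi_t,f)=\frac{2\int_{\partial P}f^{-n}d\sigma}{\left(\int_P f^{-n-1}dx\right)^{\frac{n}{n+1}}}=:\mathcal{B}(f).
\]
Since $\limsup_t\CRYam(\varphi_t)\leq\mathcal{B}(f)$ for every such $f$, it remains to show $\inf_f\mathcal{B}(f)=0$ over positive $f$ that are smooth and affine near $\partial P$.

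\textbf{Step 2.} This is the main point. I would take $f_\epsilon$ equal to $1$ near $\partial P$ and dipping to a value $\epsilon$ on a small ball around $o$, interpolated smoothly in $P^\circ$. The numerator stays fixed at $2\int_{\partial P}d\sigma$. The denominator satisfies $\int_P f_\epsilon^{-n-1}\geq \epsilon^{-n-1}\operatorname{Vol}(B)\to\infty$, so $\mathcal{B}(f_\epsilon)\to0$. Fix $\epsilon$, let $t\to\infty$ using Step~1, then let $\epsilon\to0$; this yields $\lim_{t\to\infty}\CRYam(\varphi_t)\leq 0$, or $\limsup$ if the limit is not known to exist.

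\textbf{Main obstacle.} The delicate step is the uniform decay of $H_t$ on $K$. Here the ``lower order terms'' must be controlled in $C^2$ on compact subsets of $P^\circ$, so that $t^{-k}G_t\to\Hess\varphi_\infty$ uniformly there, and that is what I would assume about the path. Nothing is needed near the boundary, because $\Hess f$ vanishes there. One should also check that the exchange of limits is legitimate. It is, since $f_\epsilon$ is independent of $t$ and the bound $\CRYam\leq\EH(\varphi_t,f_\epsilon)$ holds pointwise in $t$.
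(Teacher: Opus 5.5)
Your argument is correct and follows the same strategy as the paper: fix $t$-independent test functions that are bounded below on $\partial P$ and small near $o$, let the Hessian term vanish as $t\to+\infty$ so that only $\InfFun(f)$ survives, and then make $\InfFun(f)$ arbitrarily small. The only difference is the choice of test functions. The paper uses $f_{m,\varepsilon}=(\varepsilon+\alpha\, d(x,o)^2)^m$, whose Hessian is nonzero up to $\partial P$, and there the expansion $H_t=t^{-k}\bigl((\Hess\varphi_\infty)^{-1}+O(t^{-1})\bigr)$ that the paper invokes is not uniform, because $H_t$ is pinned by the boundary conditions. Your choice of $f$ constant near $\partial P$ avoids this issue and needs only $C^2$ control of the lower-order terms on compact subsets of $P^\circ$.
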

\begin{proof}
    We consider the following class of test functions
    \begin{equation}
        f_{m,\varepsilon}(x) \coloneqq \left(\varepsilon + \alpha d(x,o)^2\right)^m
    \end{equation}
    where $\varepsilon>0$, $m$ is a positive integer, and $\alpha>0$ is chosen (depending on $o$) so that $f_{m,\varepsilon}>1$ on $\partial P$. Then, we can estimate the various pieces entering in the definition of the CR Einstein--Hilbert functional as follows. First, as our test functions are greater than $1$ on the boundary,
    \begin{equation}
        \int_{\partial P} f_{m,\varepsilon}^{-n}d\sigma < \int_{\partial P} d\sigma.
    \end{equation}
    Moreover, we have $f_{m,\varepsilon}(x)<(2\varepsilon)^m$ for every $x$ in a ball of radius $\sqrt{\varepsilon/\alpha}$ centered at $o$ (that we can suppose to be contained in the interior of $P$). Hence,
    \begin{equation}
        \int_P f_{m,\varepsilon}^{-n-1}d\mu > \int_{B_{\sqrt{\varepsilon/\alpha}}(o)} f_{m,\varepsilon}^{-n-1}d\mu \geq (2\varepsilon)^{-m(n+1)}\int_{B_{\sqrt{\varepsilon/\alpha}}(o)}d\mu
    \end{equation}
    so that
    \begin{equation}
        \left(\int_P f_{m,\varepsilon}^{-n-1}d\mu\right)^{-\frac{n}{n+1}} \leq (2\varepsilon)^{mn}\left(\int_{B_{\sqrt{\varepsilon/\alpha}}(o)}d\mu\right)^{-\frac{n}{n+1}}.
    \end{equation}
    Note that we can make this quantity infinitesimal as $\varepsilon\to 0$, by choosing $m$ appropriately.

    To estimate the Hessian term, let's first compute the second derivative of $f_{m,\varepsilon}$. The direct computation gives
    \begin{equation}
        \partial_i\partial_j f_{m,\varepsilon} = 4m(m-1)\alpha^2 f_{m,\varepsilon}\frac{(x_i-o_i)(x_j-o_j)}{\left(\varepsilon+\alpha d(x,o)^2\right)^2} + 2m\alpha f_{m,\varepsilon}\frac{\delta_{ij}}{\varepsilon+\alpha d(x,o)^2},
    \end{equation}
    which when paired with some other matrix $H$ gives
    \begin{equation}
        \langle H, \Hess f_{m,\varepsilon} \rangle = 4m(m-1)\alpha^2 f_{m,\varepsilon}\frac{\langle (x-o), H(x-o)\rangle}{\left(\varepsilon+\alpha d(x,o)^2\right)^2} + 2m\alpha f_{m,\varepsilon}\frac{\Tr(H)}{\varepsilon+\alpha d(x,o)^2}.
    \end{equation}
    Now, when taking the path $\varphi_t = t^k \varphi_\infty+O(t^{k-1})$, we can expand in $t$ as $t\to+\infty$ to find, since $\Hess\varphi_\infty$ is invertible,
    \begin{equation}
        H_t = t^{-k}\left((\Hess\varphi_\infty)^{-1}+O(t^{-1})\right).
    \end{equation}
    Now we can expand in $t$ the whole integral, assuming for the moment that $\varepsilon$ does not depend on $t$.
    \begin{equation}
    \begin{split}
        \int_P f_{m,\varepsilon}^{-n-1} & \langle H_t, \Hess f_{m,\varepsilon} \rangle d\mu =\\
        =& 4m(m-1)\alpha^2 \int_P f_{m,\varepsilon}^{-n} \frac{\langle (x-o), H_t(x-o)\rangle}{\left(\varepsilon+\alpha d(x,o)^2\right)^2} d\mu + 2m\alpha \int_P f_{m,\varepsilon}^{-n} \frac{\Tr(H)}{\varepsilon+\alpha d(x,o)^2} d\mu\\
        =& \frac{4m(m-1)\alpha^2}{t^k} \int_P f_{m,\varepsilon}^{-n} \frac{\langle (x-o), \Hess(\varphi_\infty)^{-1}(x-o)\rangle}{\left(\varepsilon+\alpha d(x,o)^2\right)^2} d\mu\\
        &+ \frac{2m\alpha}{t^k} \int_P f_{m,\varepsilon}^{-n} \frac{\Tr(\Hess(\varphi_\infty)^{-1})}{\varepsilon+\alpha d(x,o)^2} d\mu + O(t^{-k-1}).
    \end{split}
    \end{equation}
    Fix some $\delta>0$; we can choose $o\in P^\circ$ and $\alpha,\varepsilon,m$ so that
    \begin{enumerate}
        \item $f_{m,\varepsilon}>1$ on $\partial P$;
        \item $(2\varepsilon)^{m}\operatorname{Vol}(B_{\sqrt{\varepsilon/\alpha}}(0))^{-\frac{1}{n+1}} < \left(\delta \operatorname{Vol}(\partial P,d\sigma)^{-1}\right)^{\frac{1}{n}}$.
    \end{enumerate}
    Then, we find
    \begin{equation}
    \begin{split}
        \EH(f_{m,\varepsilon},H_t) =& \frac{\int_{\partial P} f_{m,\varepsilon}^{-n}d\sigma}{\left(\int_P f_{m,\varepsilon}^{-n-1}d\mu\right)^{\frac{n}{n+1}}} + \frac{\int_P f_{m,\varepsilon}^{-n-1} \langle H_t, \Hess f_{m,\varepsilon} \rangle d\mu}{\left(\int_P f_{m,\varepsilon}^{-n-1}d\mu\right)^{\frac{n}{n+1}}}\\
        < &   \left(\int_{\partial P}d\sigma\right)  (2\varepsilon)^{mn}\left(\int_{B_{\sqrt{\varepsilon/\alpha}}(o)}d\mu\right)^{-\frac{n}{n+1}}\\
        &+ (2\varepsilon)^{mn}\left(\int_{B_{\sqrt{\varepsilon/\alpha}}(o)}d\mu\right)^{-\frac{n}{n+1}}
        \frac{4m(m-1)\alpha^2}{t^k} \int_P f_{m,\varepsilon}^{-n} \frac{\langle (x-o), \Hess(\varphi_\infty)^{-1}(x-o)\rangle}{\left(\varepsilon+\alpha d(x,o)^2\right)^2} d\mu\\
        &+ (2\varepsilon)^{mn}\left(\int_{B_{\sqrt{\varepsilon/\alpha}}(o)}d\mu\right)^{-\frac{n}{n+1}}\frac{2m\alpha}{t^k} \int_P f_{m,\varepsilon}^{-n} \frac{\Tr(\Hess(\varphi_\infty)^{-1})}{\varepsilon+\alpha d(x,o)^2} d\mu + O(t^{-k-1})\\
        <& \delta + \delta\frac{4m(m-1)\alpha^2}{t^k\operatorname{Vol}(\partial P,d\sigma)} \int_P f_{m,\varepsilon}^{-n} \frac{\langle (x-o), \Hess(\varphi_\infty)^{-1}(x-o)\rangle}{\left(\varepsilon+\alpha d(x,o)^2\right)^2} d\mu\\
        &+\delta\frac{2m\alpha}{t^k\operatorname{Vol}(\partial P,d\sigma)} \int_P f_{m,\varepsilon}^{-n} \frac{\Tr(\Hess(\varphi_\infty)^{-1})}{\varepsilon+\alpha d(x,o)^2} d\mu + O(t^{-k-1}).
    \end{split}
    \end{equation}
    The terms in these last two integrals (and in the $O(t^{-k-1})$ part) are not $+\infty$; so we can take $t$ to be extremely large (depending on $\delta$, as well as the other parameters) to obtain \( \EH(f_{m,\varepsilon},H_t) < 4 \delta. \)
    Hence, for every $\delta$ there exists $T$ such that
    \(
        \CRYam(H_t) < 4\delta
    \)
    for every $t>T$. 
\end{proof}

Roughly, the proof of Proposition~\ref{prop:infinitypath} shows that for a positive function $f$ and a strictly convex, smooth function $\varphi_\infty$,
\begin{equation}
    \lim_{t\to+\infty}\EH(f,\varphi_0+t\varphi_\infty) = \InfFun(f),
\end{equation}
and in particular the limit does not depend on the symplectic potential. Here, we define
\begin{equation}\label{eq:InfFun_def}
    \InfFun(f) \coloneqq \frac{2\int_{\partial P} f^{-n} d\sigma}{\left(\int_P f^{-n-1}dx\right)^{n/n+1}}.
\end{equation}
Of course, the infimum of ${\bf I}$ over all positive smooth functions on $P$ is zero.

Notice that when $f$ is affine-linear, then $\InfFun(f)=\EH(\phi,f)$ for every $\varphi\in\mathcal{S}(P)$. Moreover, $\InfFun$ is defined for conformal factors $f$ that are not necessarily smooth. We will show in the next section that in fact, $\InfFun(f_{pl})$ for a concave, piecewise-linear function $f_{pl}$, is strictly related to the \emph{algebraic CR Einstein--Hilbert functional} introduced in \cite{LahdiliLegendreScarpa_EHDF}.

\section{The Action functional of a toric Sasaki manifold}\label{sec:action}

One of the main results of \cite{LahdiliLegendreScarpa_EHDF,LahdiliLegendreScarpa_CRYam} is that the CR Einstein--Hilbert functional detects the K-stability of Sasaki manifolds. In this Section, we briefly recall the main points of the construction and then show how to recover some results of \cite{LahdiliLegendreScarpa_EHDF,LahdiliLegendreScarpa_CRYam} more directly. In turn, this leads to formulate Question~\ref{q:CRYam_testconf}, relating the toric CR Yamabe invariant of $(N,D)$ to stability properties of the polytope $P_{\xi_0}$. 

\smallskip

For a test configuration $(\tstX,\tstL)$ for a polarised K\"ahler manifold $(X,L)$ and a Reeb field $\chi\in\Reebcone$, let $\omega_t$ be the geodesic ray of K\"ahler metrics associated to $(\tstX,\tstL)$, and let $\eta_t$ be a corresponding path of Sasaki forms on the $U(1)$-bundle associated to $L\to X$. Then, there is a $2$-parameter path (a ``ribbon'') of conformal factors $f_{t,s}$, such that for every small enough $s$, the limit
\begin{equation}
    \EH^\chi_s(\tstX,\tstL) \coloneqq \lim_{t\to+\infty}\EH(f_{s,t}^{-1}\eta_t)
\end{equation}
exists and does not depend on the starting point of the geodesic ray. Moreover, there is an asymptotic expansion
\begin{equation}
    \EH^\chi_s(\tstX,\tstL) = \EH(\chi) + s\operatorname{DF}(\tstX,\tstL) + O(s^2),
\end{equation}
where $\operatorname{DF}(\tstX,\tstL)$ is the Donaldson-Futaki weight of the test configuration.

\smallskip

As in the general setting, to study the CR Einstein--Hilbert functional along a \emph{toric} geodesic ray, we need to evaluate $\EH$ on symplectic potentials $\varphi$ that are not smooth, but rather just continuous and whose second variation is a positive current. Indeed, a general geodesic ray in the toric setting is a line of the form
\begin{equation}
    \varphi_t = \varphi_0 + t \varphi_{pl}
\end{equation}
for a convex, piecewise-linear function $\varphi_{pl}$ on the polytope. While one can make sense of the Hessian of such a function, in the sense of distributions, algebraic manipulations such as taking the inverse Hessian
\begin{equation}
    \varphi\mapsto \operatorname{Hess}(\varphi)^{-1}
\end{equation}
are not well-defined, so it is not clear how one might extend the CR Einstein--Hilbert functional to this setting.

This difficulty can be overcome by considering the \emph{action} along the ribbon $f_{s,t}^{-1}\eta_t$. In the present setting, this corresponds to considering
\begin{equation}
    \mathcal{A}_s(T) \coloneqq \int_0^\Torus\Scaltot(f_{s,t},\varphi_t)dt
\end{equation}
for a \emph{smooth} path of symplectic potentials $\varphi_t$ (and a corresponding ribbon of conformal factors $f_{s,t}$), and then showing that the resulting function of one real variable $t\mapsto\mathcal{A}_s(t)$ is actually well-defined even if $f_{s,t}$ and $\varphi_t$ have less regularity. This gives a way of extending the definition of $\EH$ by defining $\Scaltot(f_{s,t},u_t)\coloneqq\partial_t\mathcal{A}_s(t)$ for less regular data.

\smallskip

Hence, let us consider a smooth geodesic ray $\varphi_t \coloneqq  \varphi_0 + t \varphi_\infty$. For any $\chi\in\Reebcone$, the ribbon corresponding to $\varphi_\infty$ and $\chi$ is (see \cite{LahdiliLegendreScarpa_EHDF})
\begin{equation}
    f_{s,t} \coloneqq \ell_\chi-s\varphi_\infty,
\end{equation}
where $\ell_\chi$ is an affine-linear function on $P$. As in this case $f_{s,t}$ does not depend on $t$, we will simply denote it by $f_s$.

The total scalar curvature along the ribbon is 
\begin{equation}
    \Scaltot(f_{s},\varphi_t)=2\int_{\partial P}f_{s}^{-n}d\sigma + n\int_P f_{s}^{-n-1}\operatorname{Tr}(\operatorname{Hess}(\varphi_t)^{-1}\operatorname{Hess}(f_{s}))dx.
\end{equation}
Note however that
\begin{equation}\label{eq:hessian_condition}
    \operatorname{Hess}(f_{s})=-s\operatorname{Hess}(u_\infty)=-s\partial_t\operatorname{Hess}(\varphi_t).
\end{equation}
Substituting this into the expression of $\Scaltot$ then we find
\begin{equation}
\begin{split}
    \Scaltot(f_{s},\varphi_t) = & 2\int_{\partial P}f_{s}^{-n}d\sigma -ns\int_P f_{s}^{-n-1}\operatorname{Tr}(\operatorname{Hess}(\varphi_t)^{-1}\partial_t\operatorname{Hess}(\varphi_t))dx\\
    = & \partial_t\left[2t\int_{\partial P}f_{s}^{-n}d\sigma -ns\int_P f_{s}^{-n-1}\log\det\left(\operatorname{Hess}(\varphi_t)\right)dx\right].
\end{split}
\end{equation}
Hence, we define the \emph{action functional} along the ribbon $(f_{s},\varphi_t)$ as
\begin{equation}
    \mathcal{A}^\chi_s(t) \coloneqq 2t\int_{\partial P}f_{s}^{-n}d\sigma -ns\int_P f_{s}^{-n-1}\log\det\operatorname{Hess}(\varphi_t)\,dx.
\end{equation}
This expression however might be problematic if $\varphi_\infty$ is piecewise-linear: in that case, the entries of $\Hess(\varphi_t)$ include some delta functions of which we might not be allowed to take products (necessary to compute the determinant) or the logarithms.

To circumvent the issue, note that for a smooth convex function $\varphi$, $\det\Hess\varphi$ is the density of the Riemannian measure induced by $\varphi$ (the Monge-Ampère measure of $\varphi$). Formally then we can interpret $\log\det\operatorname{Hess}(\varphi)\,dx$ as
\begin{equation}
    \log\left(\frac{d\operatorname{MA}(u_t)}{dx}\right)dx,
\end{equation}
i.e.\ as a relative entropy of two measures on $P$. Note that
\begin{equation}
    \operatorname{MA}(\varphi_t)=\operatorname{MA}(\varphi_0)+t\operatorname{MA}(\varphi_\infty)
\end{equation}
is the Lebesgue decomposition of the Monge-Ampère measure with respect to the Lebesgue measure $dx$: the absolutely continuous part is $\operatorname{MA}(\varphi_0)$, while $\operatorname{MA}(\varphi_\infty)\perp dx$ (cf.\ \cite[Theorem 6.10]{Rudin}). Then, we can appeal to \cite[Theorem 7.14]{Rudin} to compute the Radon-Nikodym derivative of $\operatorname{MA}(\varphi_t)$ with respect to $\mu$, since the singular part has zero derivative:
\begin{equation}
    \frac{d\operatorname{MA}(\varphi_t)}{dx} = \frac{d\operatorname{MA}(\varphi_0)}{dx}.
\end{equation}
Hence, the action functional is
\begin{equation}\label{eq:act_func_geodribbon}
    \mathcal{A}^\chi_s(t) \coloneqq 2t\int_{\partial P}f_{s}^{-n}d\sigma -ns\int_P f_{s}^{-n-1}\log\det\operatorname{Hess}(\varphi_0)\,dx.
\end{equation}
As this expression is affine in $t$, we obtain, for the limit of the CR Einstein--Hilbert functional,
\begin{proposition}
    For any $s>0$ such that $f_s=\ell_\chi-s\varphi_\infty>0$,
    \begin{equation}
        \EH_s^\chi(\mathcal{X},\mathcal{L}) = \lim_{t\to +\infty} \frac{\partial_t\mathcal{A}^\chi_s(t)}{\left(\int_P f_s^{-n-1}dx\right)^{\frac{n}{n+1}}} = \InfFun(f_s),
    \end{equation}
    where $\InfFun(f)$ is defined in \eqref{eq:InfFun_def}, and $(\tstX,\tstL)$ is the test configuration associated to the piecewise-linear function $\varphi_\infty$.
\end{proposition}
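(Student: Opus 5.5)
Since $f_s$ is independent of $t$, the volume term $\left(\int_P f_s^{-n-1}dx\right)^{\frac{n}{n+1}}$ in the denominator is constant along the ribbon. The whole statement therefore reduces to computing $\partial_t\mathcal{A}^\chi_s(t)$ from the extended definition \eqref{eq:act_func_geodribbon} and comparing it with the numerator of $\InfFun(f_s)$.

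The first step is to justify that \eqref{eq:act_func_geodribbon} is the correct extension of the action to the piecewise-linear case. For smooth $\varphi_\infty$ we computed $\Scaltot(f_s,\varphi_t)=\partial_t\mathcal{A}^\chi_s(t)$ using the Hessian identity \eqref{eq:hessian_condition}. For piecewise-linear $\varphi_\infty$ I take the Radon–Nikodym interpretation already explained above: $\log\det\Hess(\varphi_t)\,dx$ is read as $\log\bigl(d\operatorname{MA}(\varphi_t)/dx\bigr)\,dx$. The Lebesgue decomposition $\operatorname{MA}(\varphi_t)=\operatorname{MA}(\varphi_0)+t\operatorname{MA}(\varphi_\infty)$ has its singular part $t\operatorname{MA}(\varphi_\infty)$ supported on the crease locus, which is a null set. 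The density therefore equals $\det\Hess\varphi_0$ for every $t$. Here I use that $\varphi_0$ is a genuine symplectic potential, so $\log\det\Hess\varphi_0$ is integrable against $f_s^{-n-1}dx$. This holds because its only singularities are logarithmic, of the form $-\sum\log\ell_i$ near $\partial P$, and $f_s$ is bounded below by a positive constant on compact $P$.

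The second step is the differentiation. The extended action is affine in $t$, so
\[
\partial_t\mathcal{A}^\chi_s(t)=2\int_{\partial P}f_s^{-n}\,d\sigma
\]
for all $t$, and the limit as $t\to+\infty$ is trivial. Dividing by the $t$-independent volume factor gives exactly $\InfFun(f_s)$ as in \eqref{eq:InfFun_def}. The condition $f_s=\ell_\chi-s\varphi_\infty>0$ guarantees that all the integrals are finite and that $f_s$ is an admissible (continuous, positive) conformal factor.

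The third step, and the one I expect to be the main obstacle, is identifying this quantity with $\EH^\chi_s(\tstX,\tstL)$. This requires checking that the definition through the action derivative agrees with the limit along the ribbon from the general theory of \cite{LahdiliLegendreScarpa_EHDF}, in which the piecewise-linear ray is the toric geodesic ray of the test configuration. I would argue this by approximation. Smooth convex $\varphi_\infty^{(k)}\to\varphi_\infty$ uniformly, obtained by mollification, give actions $\mathcal{A}^{(k)}_s(t)$. In these, $\log\det\Hess(\varphi_0+t\varphi^{(k)}_\infty)$ differs from $\log\det\Hess\varphi_0$ only by a nonnegative amount concentrated near the creases. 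One would then show that its $f_s^{-n-1}dx$-integral grows sublinearly in $t$ uniformly in $k$, and vanishes after dividing by $t$ and letting $k\to\infty$. That yields the same asymptotic slope $2\int_{\partial P}f_s^{-n}d\sigma$ and matches the ribbon-limit definition. If this uniformity proves delicate, I would instead take the action-based formula as the definition of $\EH^\chi_s$ in the singular toric setting, consistent with \cite{LahdiliLegendreScarpa_CRYam}, and then the proposition follows from the first two steps.
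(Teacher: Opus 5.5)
Your proposal is correct and follows the paper's route. Your first two steps are exactly its argument: $\log\det\Hess\varphi_t\,dx$ is read as a Radon--Nikodym density, the absolutely continuous density equals $\det\Hess\varphi_0$ for every $t$, so \eqref{eq:act_func_geodribbon} is affine in $t$ with slope $2\int_{\partial P}f_s^{-n}\,d\sigma$; and the paper obtains the first equality exactly as in your fallback, by taking $\partial_t\mathcal{A}^\chi_s$ as the definition of the total scalar curvature along the ribbon for piecewise-linear data (your mollification sketch and the integrability check on $\log\det\Hess\varphi_0$ are extra justification the paper does not give, and do not conflict with it).
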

Note that $f_s=\ell_\chi-s\varphi_\infty$ is a \emph{concave} piecewise linear function, for $s>0$. Hence, we obtain a new proof in our setting of \cite[Proposition $5.7$, Corollary $5.8$]{LahdiliLegendreScarpa_CRYam}.
\begin{corollary}
    For any $\varphi_0\in\mathcal{S}(P)$ and $0<s\ll 1$,
    \begin{equation}\label{eq:EH_ineq}
        \EH(\ell_\chi-s\varphi_\infty,\varphi_0)<\InfFun(\ell_\chi-s\varphi_\infty).
    \end{equation}
\end{corollary}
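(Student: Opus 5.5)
The plan is to compare the two sides directly through the toric formula \eqref{toricEH}. Set $f_s=\ell_\chi-s\varphi_\infty$. Since the denominator $\left(\int_P f_s^{-n-1}dx\right)^{\frac{n}{n+1}}$ and the boundary term $2\int_{\partial P}f_s^{-n}d\sigma$ coincide in $\EH(f_s,\varphi_0)$ and in $\InfFun(f_s)$, it suffices to show
\[
 n\int_P f_s^{-n-1}\langle H_{\varphi_0},\Hess f_s\rangle\,dx<0 .
\]
As noted at the start of Section~\ref{sec:theorem1.1}, $\EH(\varphi_0,f)$ is well defined for continuous positive $f$ whose Hessian is a bounded distribution (measure), which is the case for the piecewise-linear $f_s$. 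Here $\Hess f_s=-s\Hess\varphi_\infty$, and since $\ell_\chi$ is affine this is a nonnegative matrix-valued measure supported on the creases of $\varphi_\infty$, multiplied by $-s$.

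The next step is to compute this term as in \eqref{eq:scaltot_piecewiselin} and in the proof of Lemma~\ref{lemma:Gv_small}. On each interior crease $\Sigma=\{\lambda=0\}$, a codimension-one wall between two affine pieces of $\varphi_\infty$, the Hessian of $\varphi_\infty$ equals $c_\Sigma\,\delta_\Sigma\, u_\Sigma\otimes u_\Sigma$ with $c_\Sigma>0$ the jump of the slope in the unit normal direction $u_\Sigma$. This gives
\[
 \int_P f_s^{-n-1}\langle H_{\varphi_0},\Hess f_s\rangle\,dx=-s\sum_\Sigma c_\Sigma\int_{\Sigma\cap P} f_s^{-n-1}\,u_\Sigma^\transpose H_{\varphi_0}u_\Sigma\,d\sigma_\Sigma .
\]
The integrand is strictly positive on $\Sigma\cap P^\circ$, because $H_{\varphi_0}$ is positive definite there and $f_s>0$. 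Hence the expression is strictly negative as soon as $\varphi_\infty$ has at least one crease meeting $P^\circ$, that is, as soon as $\varphi_\infty$ is not affine. In the affine case the test configuration is a product and equality holds, so that case must be excluded. The condition $0<s\ll1$ serves only to guarantee $f_s>0$ on the compact set $P$, which holds since $\ell_\chi>0$ there.

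The main obstacle is justification rather than computation. The plan is to mollify $f_s$ by smooth $f_s^\varepsilon$, apply the integration by parts Lemma with $\psi_1=f_s^\varepsilon$, and pass to the limit. The interior term converges because $f_s^{-n-1}$ is continuous and $H_{\varphi_0}$ is smooth up to $\partial P$. The point that needs care is where creases meet $\partial P$: there $u^\transpose H u$ may degenerate, but it is bounded, so the singular term stays finite and strictly negative. An alternative is to realize $\EH(\varphi_0,f_s)$ through the derivative of the action \eqref{eq:act_func_geodribbon} at $t=0$, but the direct computation above seems cleaner.
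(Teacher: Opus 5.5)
Your argument is correct and is essentially the paper's: the paper's only justification is that $f_s=\ell_\chi-s\varphi_\infty$ is concave. Hence the Hessian term in \eqref{toricEH} is nonpositive, while the boundary and volume terms coincide with those of $\InfFun(f_s)$. Your crease computation makes the strict inequality explicit, and you rightly note that strictness requires $\varphi_\infty$ to be non-affine (a non-product test configuration), a hypothesis the paper leaves implicit.
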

We can also reformulate a question of \cite{LahdiliLegendreScarpa_CRYam} as follows:
\begin{question}\label{q:CRYam_testconf}
    Let $P$ be a Delzant polytope. Is it true that
   \begin{equation}
        \sup_{\varphi\in\mathcal{S}(P)}\CRYam(\varphi) = \inf_{f>0,\text{ concave}} \InfFun(f)\ ?
    \end{equation}
\end{question}
The inequality ``$\leq$'' follows from \eqref{eq:EH_ineq}. Note that, while $\inf_{f>0}\InfFun(f)=0$, the infimum of $\InfFun$ over \emph{concave} functions is related to the K-stability of the polytope $P$, as any concave function on $P$ can be approximated by piecewise-linear ones, which are associated to test configurations as above.

\begin{remark}
    Equation \eqref{eq:act_func_geodribbon} shows that the action functional is \emph{affine} along a geodesic ribbon, so we can pose, \emph{for every $t\geq 0$} and $s>0$ sufficiently small, 
    \begin{equation}
        \EH(\ell_\chi - s \varphi_\infty , \varphi_0 + t \varphi_\infty) = \frac{\left(\partial_t\mathcal{A}^\chi_s(t)\right)_{t=0}}{\left(\int_P (\ell_\chi - s \varphi_\infty)^{-n-1}dx\right)^{\frac{n}{n+1}}} = \InfFun(\ell_\chi-s\varphi_\infty).
    \end{equation}
\end{remark}
The key step in this computation is the relation \eqref{eq:hessian_condition} between the Hessian of the conformal factor and the symplectic potential. Hence, we can extend $\EH$ as follows:
\begin{equation}
    \EH(\ell+a\varphi,\varphi_0+\varphi) \coloneqq \InfFun(\ell+a\varphi)
\end{equation}
for every $\varphi_0\in\mathcal{S}(P)$ and any $\varphi$, affine-linear $\ell$, and constant $a$ such that
\begin{enumerate}
    \item $\ell+a\varphi\in\mathcal{C}^0(\bar{P},\mathbb{R}_{>0})$;
    \item $\Hess(\varphi_0+\varphi)$ is a positive current with $L^\infty(P)$ coefficients.
\end{enumerate}

\section{Examples}\label{sec:examples}
To enrich the theory in this paper, we finish with this section supporting examples. In \S\ref{sec:CP1} we continue our investigation of the toric $3$-sphere over $\bbc\bbp^1$ started in \S\ref{ssCRyam2sphere}. In particular we give a very explicit case of  $\CRYam^\bbt(\varphi_a)$ being negative for a specific deformation $\varphi_a$ of the standard Fubini-Study potential, where the path $\varphi_a$ is of different nature from the ones considered in Theorem~\ref{thm:boundary_behaviour}. Instead of working directly with a deformation of a symplectic potential, in Example~\ref{ex:cp1negative} we deform the inverse of the Hessian of the potential. In \S\ref{sec:proj_bundle} (specifically \S\ref{ex:negativeHirzebruchsurf}) we construct a similar example on the non-trivial Hirzebruch surfaces, using the standard moment map (generalized Calabi/admissible) construction.

The subsection \S\ref{sec:trivialHirzebruch} gives a careful review and treatment of the case of Sasaki structures over the trivial Hirzebruch surface, $\bbc\bbp^1\times\bbc\bbp^1$. Building on results from \cite{Legendre_toricSasaki} and \S 2.4 of \cite{LahdiliLegendreScarpa_CRYam}, we are able to bring some illumination to 
Question~\ref{q:uniqueness} and also construct some new explicit non-Sasaki cscTW examples.
These cscTW solutions are not CR Yamabe minimizers.

Finally, in \S\ref{sec:nontoric} we will venture out of the toric setting in order to investigate cases where the Sasaki cone is only $2$-dimensional and yet has enough nuance with regard to the Einstein-Hilbert functional that the examples serve as further food for thought regarding Question~\ref{q:uniqueness}.

\subsection{\texorpdfstring{$S^1$}{U(1)} symmetric K\"ahler metrics on \texorpdfstring{$\bbc\bbp^1$}{CP1}}\label{sec:CP1}

In the usual toric set-up for $\bbc\bbp^1$ we consider K\"ahler metrics of the form 
\begin{equation}\label{cp1toric}
g_H=\frac {d\gz^2}
{H(\gz)}+H(\gz)d\phi^2,\quad
\omega_{\bbc\bbp^1} = d\gz \wedge d\phi,
\end{equation}
where $-1<\gz<1$, $0<\phi\leq 2\pi$, and
\begin{align}\label{endpointscp1}
(i)H(\gz) > 0, \quad -1 < \gz <1,\quad
(ii) H(\pm 1) = 0,\quad
(iii) H'(\pm 1) = \mp 2.
\end{align}
Let now $(g_H,\omega_{\bbc\bbp^1},J_H)$ be defined by some choice of $H:[-1,1]\rightarrow \bbr$ satisfying~\eqref{endpointscp1}. 

The scalar curvature of $g_H$ is given by 
$Scal=-H''(\gz)$ and if $u=u(\gz)$ is a smooth function of $\gz$ (and hence $u$ can be viewed as a smooth function on $\bbc\bbp^1$) and if $\Delta_{g_H}$ denotes the Laplacian of 
$g_H$, then $\Delta_{g_H} u =-\frac{d}{d\gz}\left[H(\gz)u'(\gz)\right]$.

We know that $\int_{\bbc\bbp^1}\omega_{\bbc\bbp^1}=4\pi$, so $\left[\frac{\omega_{\bbc\bbp^1}}{2\pi}\right]$ is $2$ times a generator of $H^2(\bbc\bbp^1,\bbz)$. After a rescale of $1/2$ (which we will ignore) we can consider $S^3$ as the Boothby-Wang $S^1$-bundle over $(\bbc\bbp^1, g_H,\omega_{\bbc\bbp^1})$. Let $\eta=\eta_{\omega_{\bbc\bbp^1}}$ be a corresponding contact $1$-form on $S^3$. 
Then the period of the Reeb orbit is $\frac{2\pi}{2}=\pi$ and if $L_\xi$ is a generic leaf of the foliation $\calf_\xi$, then $\int_{L_\xi}\eta = \pi$.

By construction $d\eta=\pi^*\omega_{\bbc\bbp^1}$ and
by Frobenius' formula for iterated integrals, we then have that,
if $u=u(\xi)$ is a smooth function of $\xi$ (and viewed via a pull-back as a function on $S^3$), then
$$
\int_M u\eta\wedge d\eta
= \int_0^{2\pi}\int_{-1}^1\left(\int_{L_\xi}\eta\right)u d\gz\wedge d\phi
= 2\pi^2 \int_{-1}^1 u d\gz.
$$

If $f:\bbc\bbp^1\rightarrow \bbr^+$ is any smooth positive function we know that
$$
\begin{array}{ccl}
Scal^{TW}(H,f)& = &f\, Scal_{g_H}-4\Delta_{g_H}f -6f^{-1}|df|^2_{g_H}\\
\\
&=& -f\,H''(\gz)+4\frac{d}{d\gz}\left(H(\gz)f'(\gz)\right)-6\frac{(f'(\gz))^2}{f(\gz)}H(\gz).
\end{array}
$$
Thus, the total scalar curvature is
$$
\begin{array}{ccl}
\mathbf{Scal}(H,f)&=& \int_{S^3}Scal^{TW}(f^{-1}\eta_{\omega_{\bbc\bbp^1}})f^{-2}\eta\wedge d\eta\\
\\
&=& 2\pi^2 \int_{-1}^1\left(-f(\gz)^{-1}\,H''(\gz)+4f(\gz)^{-2}\frac{d}{d\gz}\left(H(\gz)f'(\gz)\right)-6\frac{(f'(\gz))^2}{f(\gz)^3}H(\gz)\right)d\gz\\
\\
&=& 2\pi^2 \int_{-1}^1 f(\gz)^{-1}\left(-H''(\gz)+2\left(\frac{f'(\gz)}{f(\gz)}\right)^2 H(\gz)\right)d\gz,
\end{array}
$$
where the last equality follows from integration by parts and \eqref{endpointscp1}.
We also have that
$$\int_{S^3}f^{-2}\eta\wedge d\eta=2\pi^2 \int_{-1}^1 f(\gz)^{-2} d\gz.$$
Therefore, up to an overall positive rescale that does not depend on $f$, we have that
$$EH(H,f)= \frac{\int_{-1}^1 f(\gz)^{-1}\left(-H''(\gz)+2\left(\frac{f'(\gz)}{f(\gz)}\right)^2 H(\gz)\right)d\gz}{\left(\int_{-1}^1 f(\gz)^{-2} d\gz\right)^\frac{1}{2}}.$$
We see that if $H''(\gz)<0$, as for example when $H(\gz)=1-\gz^2$ (corresponding to the Fubini-Study metric), then $EH(H,f)$ is positive. However, as the example shows below, it is possible to chose some $H$ and $f$ such that $EH(H,f)$ becomes negative.

\begin{example}\label{ex:cp1negative}
Now consider the following deformation of $H(\gz)=1-\gz^2$:
$$H_a(\gz)=1-\gz^2 +a (1 - \gz^2)^2, $$
where $a> -1$. Consider $\hat{f}(\gz)=2-\gz^2$. This is a positive smooth function for $-1<\gz<1$ and it is not a killing potential. We calculate that up the the same overall scale as above, the CR Einstein--Hilbert functional is
$$
\begin{array}{cl}
&EH(H_a,\hat{f})\\
\\
=& \frac{10-3 \sqrt{2} \tanh ^{-1}\left(\frac{1}{\sqrt{2}}\right)}{\sqrt{2+\sqrt{2} \tanh ^{-1}\left(\frac{1}{\sqrt{2}}\right)}}\\
\\
+& \left(\frac{7 \sqrt{2} \tanh ^{-1}\left(\frac{1}{\sqrt{2}}\right)-10}{\sqrt{2+\sqrt{2} \tanh ^{-1}\left(\frac{1}{\sqrt{2}}\right)}}\right)a.
\end{array}
$$
Since $\left(\frac{7 \sqrt{2} \tanh ^{-1}\left(\frac{1}{\sqrt{2}}\right)-10}{\sqrt{2+\sqrt{2} \tanh ^{-1}\left(\frac{1}{\sqrt{2}}\right)}}\right)\approx -0.707544$ is negative, this will be negative for sufficiently large $a$ values.
Indeed, setting $\hat{a}=\frac{3 \sqrt{2} \tanh ^{-1}\left(\frac{1}{\sqrt{2}}\right)-10}{7 \sqrt{2} \tanh ^{-1}\left(\frac{1}{\sqrt{2}}\right)-10}\approx 4.9109$, we have that $EH(H_a,\hat{f})>0$ for $a<\hat{a}$, $EH(H_a,\hat{f})=0$ for $a=\hat{a}$, and $EH(H_a,\hat{f})<0$ for $a>\hat{a}$.
\end{example}
\subsection{The Trivial Hirzebruch Surface}\label{sec:trivialHirzebruch}
This section starts with a review of known material 
of the Sasaki structures over the trivial Hirzebruch surface, $\bbc\bbp^1\times\bbc\bbp^1$,
from \cite{Legendre_toricSasaki} and Section 2.4 of \cite{LahdiliLegendreScarpa_CRYam}. Subsequently we will see that this case is a nice toy example for Question~\ref{q:uniqueness} and further we will be able to construct some explicit non-Sasaki cscTW examples.
These cscTW solutions are not CR Yamabe minimizers.

Given $p,q\in \bbr^+$, consider the rectangle $P_{p,q}$ in the $xy$-plane given by  $-p<x<p$ and $-q<y<q$.
Then a  product K\"ahler structure $(g,\omega)$ on $\bbc\bbp^1\times\bbc\bbp^1$ can be described on a open dense set as follows:
\begin{equation}\label{productmetric}
g= \frac{1}{A(x)} dx^2+A(x) dt^2 + \frac{1}{B(y)} dy^2 + B(y) ds^2, \quad \omega = dx\wedge dt + dy\wedge ds,
\end{equation}
where $p,q >0$, $-p<x<p$, $-q <y<q$, $t,s$ are angular coordinates, and $A(x)$, $B(y)$ are positive smooth functions. 
For $g$ to extend to all of $\bbc\bbp^1\times\bbc\bbp^1$ as a smooth K\"ahler metric we
have the following boundary conditions:
\begin{align}
\begin{split}
A(\pm p)=0 \; \; & \;\; B(\pm q)=0\\
A'(\pm p)=\mp 2\; \; & \;\; B'(\pm q)=\mp 2
\end{split}
\end{align}
The K\"ahler class of such a K\"ahler structure is given by $2\pi\left(p c_1(\calo(2))+ qc_1(\calo(2))\right)$ and by
varying the values of $p,q \in \bbr^+$ we exhaust the  $2$-dimensional K\"ahler cone of $\bbc\bbp^1\times\bbc\bbp^1$ with 
product K\"ahler metrics whose moment polytope is exactly $P_{p,q}$. The scalar curvature of $g$ is given by
$Scal(g)=-A''(x)-B''(y)$.

We now assume that $p,q\in \bbz^+$ and $\gcd(p,q)=1$. Then $[\omega]$ is - up to an overall multiple of $4\pi$ - a primitive integer K\"ahler class on
$\bbc\bbp^1\times\bbc\bbp^1$. 

The corresponding Boothby-Wang constructed Sasaki structure $(N,\cald, J, \eta)$ above $\bbc\bbp^1\times\bbc\bbp^1$
has (unreduced) Sasaki-Reeb cone $\kt_+$  identified with the space of positive affine-linear functions over $P_{p,q}$.
Up to scale, we may assume that these functions (whose pull-back to $\bbc\bbp^1\times\bbc\bbp^1$ are killing potentials)
are of the form $f(x,y)=p^2+q^2+c_A x+c_B y$ and thus, up to scale $\kt_+$
is identified with the quadrilateral 
$$D_{p,q}=\{(c_A,c_B)\in {\bbr}^2\,|\,p^2+q^2\pm c_Ap\pm c_Bq>0\,\wedge\,p^2+q^2\pm c_Ap\mp c_Bq>0\}.$$
Put in another way, $D_{p,q}$ parametrizes the rays in $\kt_+$.

Any smooth function $f:P_{p,q} \rightarrow \bbr^+$ given by $f(x,y)$ may be viewed as a $\bbt^2$-invariant smooth positive function 
on $\bbc\bbp^1\times\bbc\bbp^1$. As such, we can consider $\Delta_g(f)$ and $|df|^2_g$:
\begin{equation}\label{laplacianANDother}
\begin{array}{ccl}
\Delta_g(f) &=& -\frac{\partial}{\partial x}\left(A\frac{\partial f}{\partial x}\right)-\frac{\partial}{\partial y}\left(B\frac{\partial f}{\partial y}\right)\\
\\
|df|^2_g &=& \left(\frac{\partial f}{\partial x}\right)^2 A+ \left(\frac{\partial f}{\partial y}\right)^2 B
\end{array}
\end{equation}

We now have that 
\begin{equation}\label{scaltwtrivialhirzebruch}
\begin{array}{ccl}
Scal^{TW}(A,B,f) & = & f Scal (g)-6\Delta_g f-12|df|^2_g f^{-1}\\
\\
&=& -f\left(A''(x)+B''(y)\right)+6\left(\frac{\partial}{\partial x}\left(A(x)\frac{\partial f}{\partial x}\right)+\frac{\partial}{\partial y}\left(B(y)\frac{\partial f}{\partial y}\right)\right) \\
\\
&- & 12 \left( \left(\frac{\partial f}{\partial x}\right)^2 A(x)+ \left(\frac{\partial f}{\partial y}\right)^2 B(y)\right)f^{-1}
\end{array}
\end{equation}
and, up to an overall fixed scaling factor,
\begin{equation}\label{energytrivialhirzebruch}
EH(A,B,f) = \frac{\int_{-q}^{q}\int_{-p}^p f^{-3}Scal^{TW}(f^{-1}\eta) \,dx\,dy}{\left(\int_{-q}^{q}\int_{-p}^p f^{-3}\,dx\,dy\right)^{2/3}}.
\end{equation}

\subsubsection{Constant Scalar Curvature Sasaki metrics}

From \cite{Legendre_toricSasaki} we know that if we choose 
$$A(x)=\frac{p^2-x^2}{p}+\left(\frac{c_A^2 p (p+q)}{2 q \left(-c_A^2 p^2+3 p^4+6 p^2 q^2+3 q^4\right)}\right) \left(\frac{p^2-x^2}{p}\right)^2$$
and $B(y)=\frac{q^2-y^2}{q}$, then
for $f(x,y)=p^2+q^2+c_A x$, $Scal(g)_{f,4}=f Scal^{TW}(f^{-1}\eta)$ is a killing potential (linear affine function in $x$) and thus,
for all $(c_A,0)\in D_{p,q}$, we have an extremal Sasaki structure.
When $c_A=0$, of course this extremal Sasaki metric is actually cscS and its regular K\"ahler quotient is just the product cscK structure
on $\bbc\bbp^1\times\bbc\bbp^1$. When $p>5q$, as discovered by Legendre in \cite{Legendre_toricSasaki}, we also have cscS solutions when
$c_A=\pm\frac{\left(p^2+q^2\right) }{p}\sqrt{\frac{p-5 q}{p-q}}$ ($(c_A,0)$ is safely inside $D_{p,q}$) and then
\begin{equation}\label{cscSxtraAfunction}
A(x)=\frac{p^2-x^2}{p}+\left(\frac{p - 5 q}{4 p q}\right) \left(\frac{p^2-x^2}{p}\right)^2.
\end{equation}
Since $A(x)$ only depends on $|c_A|$ and $B(y)$ is already fixed, these extra cscS metrics are technically extremal Sasaki twins as defined in \cite{BHLT25}, albeit related by the Weyl group. See Section 3.6 in \cite{BHLT25} for more details.
\begin{remark}\label{symmetry}
By symmetry, similar results as above follows if we instead chose $A(x)=\frac{p^2-x^2}{p}$,
$B(y)=\frac{q^2-y^2}{q}+ \left(\frac{c_B^2 q (p+q)}{2 p \left(-c_B^2 q^2+3 q^4+6 p^2 q^2+3 p^4\right)}\right) \left(\frac{q^2-y^2}{q}\right)^2$, and
$f(x,y)=p^2+q^2+c_B y$.
\end{remark}

\subsubsection{CR Einstein--Hilbert energy as a function on $D_{p,q}$}\label{sec:EH_Dpq}

For convenience of calculations, we now let $A(x)=\frac{p^2-x^2}{p}$ and $B(y)=\frac{q^2-y^2}{q}$ and use
\eqref{energytrivialhirzebruch} to
calculate $EH(f^{-1}\eta_\omega)$ for $f(x,y)=p^2+q^2+c_A x+c_B y$. Denoting the resulting
$EH(A,B,f)$ by $EH(c_A,c_B)$, this gives us a function on $D_{p,q}$:

$$EH(c_A,c_B)=\frac{2\cdot 2^{2/3} \left(-c_A^2 p^2 (p-q)+c_B^2 q^2 (p-q)+(p+q) \left(p^2+q^2\right)^2\right)}{\sqrt[3]{p^2 q^2 \left(p^2+q^2\right)^2 \left(\left(p^2+q^2\right)^2-(c_A p-c_B q)^2\right)
 \left(\left(p^2+q^2\right)^2-(c_A p+c_B q)^2\right)}}$$
 
For $p\leq 5q$ and $q\leq 5p$, it is easy to check that $EH(c_A,c_B)$ has a global minimum of value $(\frac{32 (p+q)^3}{p^2 q^2})^{1/3}$ at $(c_A,c_B)=(0,0)$.
However, if $p>5q$, the point $(c_A,c_B)=(0,0)$ is the location of a saddle point and we have
two global minima of value $(\frac{432 (p-q)^2}{p^2 q})^{1/3}$ at $(c_A,c_B)=(\pm\frac{\left(p^2+q^2\right) }{p}\sqrt{\frac{p-5 q}{p-q}},0)$. Likewise, if
$q>5p$, the point $(c_A,c_B)=(0,0)$ is the location of a saddle point and we have
two global minima of value $(\frac{432 (p-q)^2}{q^2 p})^{1/3}$ at $(c_A,c_B)=(\pm\frac{\left(p^2+q^2\right) }{q}\sqrt{\frac{q-5 p}{q-p}},0)$.
See Figure~\ref{2min1saddle}.
No other critical points appear.

\begin{figure}[t]
\includegraphics[scale=0.5]{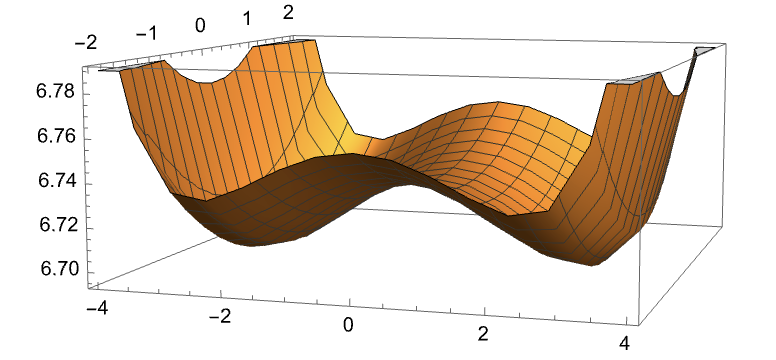}
\caption{Graph of $EH(c_A,c_B)$ when $p=6$ and $q=1$.}
\label{2min1saddle}
\end{figure}

This is in line with the summary above as well as a result from \cite{BoyerHuangLegendre_DH} implying that $EH(c_A,c_B)$ grows without bound as $(c_A,c_B)$ approaches the edges of $D_{p,q}$.
Note that this means
\begin{itemize}
\item For $p\leq 5q$ and $q\leq 5p$, $\EHmin=(\frac{32 (p+q)^3}{p^2 q^2})^{1/3}$.
\item For $p>5q$, $\EHmin=(\frac{432 (p-q)^2}{p^2 q})^{1/3}$.
\item For $q>5p$, $\EHmin=(\frac{432 (p-q)^2}{q^2 p})^{1/3}$.
\end{itemize}

\begin{example}
Suppose $p=6$ and $q=1$. Then $\EHmin=(300)^{1/3} \approx 6.694$.
Let $A(x)=\frac{p^2-x^2}{p}=\frac{36-x^2}{6}$ and $B(y)=\frac{q^2-y^2}{q}=1-y^2$ and consider the corresponding
Sasaki contact form $\eta$.  Evidently, from above we know that $\eta$, despite being cscS, is NOT a minimizer of $EH$ over
$\kt_+$, much less over
$[\eta]^{\bbt^2}$.
As observed in \cite{LahdiliLegendreScarpa_CRYam}, we know that $\caly^{\bbt^2}_{CR}(\eta) \leq  \EHmin$.
We can get a little more information about $\caly^{\bbt^2}_{CR}(\eta)$ by considering $f(x,y)=e^{cx}$, where $c\in \bbr$.
Then, using \eqref{energytrivialhirzebruch}, we get that 
$$EH(A,B,f) = EH(c)= \frac{11 \sinh (12 c)+36 c \cosh (12 c)}{2 \sqrt[3]{6} c \left(\frac{\sinh (18 c)}{c}\right)^{2/3}}.$$
Graphing this function (see Figure~\ref{sharpinequality}) together with $\EHmin \approx 6.694$, it is evident that
we have a strict inequality;
 $\caly^{\bbt^2}_{CR}(\eta) <  \EHmin$.
 What the precise value of  $\caly^{\bbt^2}_{CR}(\eta)$ is remains unknown.
 
\begin{figure}[t]
\includegraphics[scale=0.5]{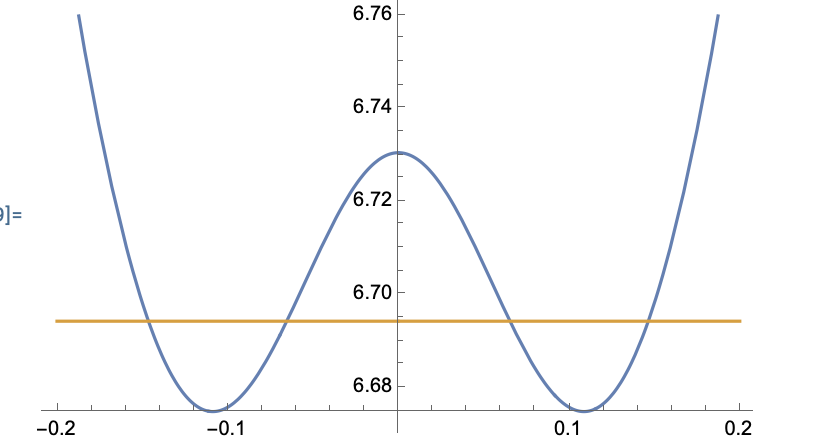}
\caption{Graph of $EH(c)$ when $p=6$ and $q=1$.}
\label{sharpinequality}
\end{figure}

\end{example}

\begin{question}
For $p>5q$, let $$A(x)=\frac{p^2-x^2}{p}+\left(\frac{p - 5 q}{4 p q}\right) \left(\frac{p^2-x^2}{p}\right)^2$$
and $B(y)=\frac{q^2-y^2}{q}$ and consider the corresponding
Sasaki contact form $\eta_\omega$. 
Let $f(x,y)=p^2+q^2+c_A x$ with
$c_A=\pm\frac{\left(p^2+q^2\right) }{p}\sqrt{\frac{p-5 q}{p-q}}$. We know that $\EHmin=EH(f^{-1}\eta_\omega)$ and that
$f^{-1}\eta$ are cscS. Are they are absolute
minimizers of $EH$ over $[\eta]^{\bbt^2}$?
\end{question}

\begin{remark}The behavior of $\bbc\bbp^1\times \bbc\bbp^1$ is remarkably similar for the classical Yamabe problem. Much is still unknown in this setting as well (see the discussions in \cite{LeBrun_EinstMax,lebrun23}), but we do know from
\cite{LeBrun_EinstMax} that if we chose 
$$A(x)=\frac{p^2-x^2}{p}+\left(\frac{c_A^2 p (p+q)}{2 q \left(-c_A^2 p^2+3 p^4+6 p^2 q^2+3 q^4\right)}\right) \left(\frac{p^2-x^2}{p}\right)^2$$
and $B(y)=\frac{q^2-y^2}{q}$, then
for $f(x,y)=p^2+q^2+c_A x$, $Scal(g)_{f,4}=Scal(f^{-2}g)$ is constant
if and only if $c_A=0$ (yielding the usual product cscK metric) or $c_A=\pm \frac{\sqrt{p-2 q} \left(p^2+q^2\right)}{p^{3/2}}$, the latter option being a viable extra solution exactly when 
$p>2q$. Remark~\ref{symmetry} applies here as well.
Now one can consider the Einstein--Hilbert functional associated to the usual Yamabe problem
\begin{equation}\label{yamabetrivialhirzebruch}
EH_{Yam}(f^{-2} g) = 2\pi\frac{\int_{-q}^{q}\int_{-p}^p f^{-4}Scal(f^{-2}g) \,dx\,dy}{\left(\int_{-q}^{q}\int_{-p}^p f^{-4}\,dx\,dy\right)^{1/2}}.
\end{equation}
Restricted to affine linear functions of $x$ and $y$ (and hence considered a function on $D_{p,q}$), this is independent on the choice of K\"ahler metric in the K\"ahler class of $g$ (by \cite{ApostolovMaschler_EinsteinMaxwell} as cited in Proposition 3.3 of \cite{futakiono18}). Indeed, using
for convenience $A(x)=\frac{p^2-x^2}{p}$ and $B(y)=\frac{q^2-y^2}{q}$ we get that with
$f(x,y)=p^2+q^2+c_A x+c_B y$, \tiny
\begin{equation}\label{yamabetrivialhirzebruchaffine}
\begin{array}{cl}
&EH_{Yam}(f^{-2} g)=EH(c_A,c_B) \\
\\
= & 2\pi \sqrt{\frac{48 \left(c_A^2 p^2 (q-p)-q^3 \left(c_B^2-2 p^2\right)+p q^2 \left(c_B^2+2 p^2\right)+p^5+p^4 q+p q^4+q^5\right)^2}{p q \left(-c_A^4 p^4-2 c_A^2 p^2 \left(q (q-c_B)+p^2\right) \left(q (c_B+q)+p^2\right)+\left(q (q-c_B)+p^2\right) \left(q (c_B+q)+p^2\right) \left(c_B^2 q^2+3 \left(p^2+q^2\right)^2\right)\right)}}
\end{array}
\end{equation}\normalsize
As stated in \cite{futakiono18}, for $p\leq 2q$ and $q\leq 2p$, $EH_{Yam}(c_A,c_B)$ has only one critical point, namely at $(c_A,c_B)=(0,0)$. For
$p\leq 2q$ and $q\leq 2p$, this is easily seen to give a
a global minimum of value $\frac{8 \pi  (p+q)}{\sqrt{p q}}$ at $(c_A,c_B)=(0,0)$.
However, as also stated in \cite{futakiono18}, if $p>2q$, we have extra critical points. In fact, in this case, the point $(c_A,c_B)=(0,0)$ is the location of a saddle point and we have
two global minima of value $8 \sqrt{3} \pi  \sqrt{\frac{2 p-q}{p}}$at $(c_A,c_B)=(\pm\frac{\sqrt{p-2 q} \left(p^2+q^2\right)}{p^{3/2}},0)$. Likewise, if
$q>2p$, the point $(c_A,c_B)=(0,0)$ is the location of a saddle point and we have
two global minima of value $8 \sqrt{3} \pi  \sqrt{\frac{2 q-p}{q}}$ at $(c_A,c_B)=(\pm\frac{\sqrt{q-2 p} \left(p^2+q^2\right)}{q^{3/2}},0)$. Thus for $p>2q$ or $q>2p$, the product cscK metric is not a Yamabe minimizer in its conformal class. As discussed in the last section of \cite{LeBrun_EinstMax}, it is not known whether the non-K\"ahler strongly Hermitian
solutions, $f^{-2}g$, of the Einstein-Maxwell equations arising from $c_A=\pm \frac{\sqrt{p-2 q} \left(p^2+q^2\right)}{p^{3/2}}$ when $p>2q$ are
Yamabe minimizers in their conformal class. Indeed, even for $p\leq 2q$ and $q\leq 2p$, we do not know if the product cscK metric (when $p\neq q$) is a Yamabe minimizer in its conformal class. See the ``Technical Question'' in \cite{lebrun23}, which considers the case $p=2q$ (or $q=2p$). However, the observation that they minimize $EH_{Yam}$ over
$D_{p,q}$ is encouraging.
\end{remark}

\subsubsection{Non-Sasaki cscTW examples}
Beyond the cscS examples above, finding explicit solutions to $Scal(A,B,f)=c$, where $Scal(A,B,f)$ is given by \eqref{scaltwtrivialhirzebruch} and $c$ is some constant real number,
is not feasible in general. However, in the following we shall exhibit some explicit non-Sasaki cscTW contact structures 
over $\bbc\bbp^1\times \bbc\bbp^1$. We will show that
none of these are absolute minimizers of $EH$ over $[\eta]^{\bbt^2}$.

Consider the following two functions
$$f_1(x,y)=\frac{1}{A_1+\frac{x^2}{p^2}}\quad \text{and} \quad f_2(x,y)=\frac{1}{A_2-\frac{x^2}{p^2}},$$
where $A_1\in (0,+\infty)\cap \bbq$ and $A_2\in (1,+\infty)\cap \bbq$. By direct calculations, we observe the following:
\begin{enumerate}
\item If $p,q\in\bbz^+$ are co-prime such that $\frac{p}{q}=\frac{\left(103 A_1^2+66 A_1+19\right)}{(A_1+1) (5 A_1+1)}$, 
$$A(x)=\frac{p^2-x^2}{p}-\frac{3 A_1+1}{\left(103 A_1^2+66 A_1+19\right) q} \left(\frac{p^2-x^2}{p}\right)^2,$$ and
$B(y)=\frac{q^2-y^2}{q}$, then
$$Scal(A,B,f_1) =\frac{72 (3 A_1+1)}{\left(103 A_1^2+66 A_1+19\right) q},$$
$$EH(A,B,f_1)=
\left(\frac{1492992 (3 A_1+1)^3 \left(35 A_1^3+35 A_1^2+21 A_1+5\right)}{35 (A_1+1) (5 A_1+1) \left(103 A_1^2+66 A_1+19\right)^2 q}\right)^{1/3},$$
and, since $p>5q$,
$$\EHmin=\left(\frac{432 (p-q)^2}{p^2 q}\right)^{1/3}=\left(\frac{1728 \left(49 A_1^2+30 A_1+9\right)^2}{\left(103 A_1^2+66 A_1+19\right)^2 q}\right)^{1/3}.$$
Finally, 
$$
\begin{array}{cl}
&(EH(A,B,f_1))^3-(\EHmin)^3\\
\\
= & \frac{1728 \left(396305 A_1^6+614250 A_1^5+565243 A_1^4+337308 A_1^3+120231 
A_1^2+21114 A_1+1485\right)}{35 (A_1+1) (5 A_1+1) \left(103 A_1^2+66 A_1+19\right)^2 q}\\
\\
> & 0,
\end{array}
$$
and hence $f_1$ does not give an absolute minimizer of $EH$ over the $\bbt^2$-invariant conformal class of contact forms.

\item If $p,q\in\bbz^+$ are co-prime such that $\frac{p}{q}=\frac{\left(103 A_2^2-66 A_2+19\right)}{(A_2-1) (5 A_2-1)}$, 
$$A(x)=\frac{p^2-x^2}{p}+\frac{3 A_2-1}{\left(103 A_2^2-66 A_2+19\right) q}\left(\frac{p^2-x^2}{p}\right)^2,$$ and
$B(y)=\frac{q^2-y^2}{q}$, then
$$Scal(A,B,f_2) =\frac{72 (3 A_2-1)}{\left(103 A_2^2-66 A_2+19\right) q},$$
$$EH(A, B,f_2)=
\left(\frac{1492992 (3 A_2-1)^3 \left(35 A_2^3-35 A_2^2+21 A_2-5\right)}{35 (A_2-1) (5 A_2-1) 
\left(103 A_2^2-66 A_2+19\right)^2 q}\right)^{1/3},$$
and, since $p>5q$,
$$\EHmin=\left(\frac{432 (p-q)^2}{p^2 q}\right)^{1/3}=\left(\frac{1728 \left(49 A_2^2-30 A_2+9\right)^2}{\left(103 A_2^2-66 A_2+19\right)^2 q}\right)^{1/3}.$$
Finally, 
$$
\begin{array}{cl}
&(EH(A,B,f_2))^3-(\EHmin)^3\\
\\
= & \frac{1728 \left(396305 A_2^6-614250 A_2^5+565243 A_2^4-337308 A_2^3+120231 A_2^2-21114 A_2+1485\right)}{35 (A_2-1)
 (5 A_2-1) \left(103 A_2^2-66 A_2+19\right)^2 q}\\
 \\
 =& \frac{1728 \left( 396305 (A_2-1)^6+1763580 (A_2-1)^5+3438568 (A_2-1)^4+3707264 (A_2-1)^3  \right)}
 {35 (A_2-1) (5 A_2-1) \left(103 A_2^2-66 A_2+19\right)^2 q}\\
 \\
 +& \frac{1728 \left( 2301840 (A_2-1)^2+774976 (A_2-1)+110592  \right)}
 {35 (A_2-1) (5 A_2-1) \left(103 A_2^2-66 A_2+19\right)^2 q}\\
 \\
> &0,
\end{array}
$$
and hence $f_2$ does not give an absolute minimizer of $EH$ over the $\bbt^2$-invariant conformal class of contact forms.
\end{enumerate}

\subsection{Ruled surfaces of the form \texorpdfstring{$\bbp(\BOne \oplus {\call}_k)\rightarrow \Sigma$}{P(1+Lk)}}\label{sec:proj_bundle}

Let $X$ be (the total space of) the bundle $\bbp(\BOne \oplus {\call}_k)\rightarrow \Sigma$, where $\Sigma$ is a compact Riemann surface of genus $\gg$, $\BOne$ is the trivial holomorphic line bundle over 
$\Sigma$, and ${\call}_k \rightarrow \Sigma$ is a holomorphic line bundle over $\Sigma$ such that $c_{1}({\call}_k) =\left[\frac{\omega_{\Sigma}}{2 \pi}\right]$,
where $(\omega_\Sigma,g_\Sigma)$ is a KE structure on $\Sigma$ with scalar curvature $2s=\frac{4(1-\gg)}{k}$ for
$k\in \bbz^+$. We say that ${\call}_k$ has degree $k$.

In \cite{Calabi_extremal_metrics} Calabi constructed extremal K\"ahler metrics in every K\"ahler class of $X$ when $\Sigma=\bbc\bbp^1$ and these metrics are each a special case of \emph{admissible K\"ahler metrics} as defined in \cite{HamFormsIII}. 
First, we give a quick overview of such admissible metrics on $X$ (very similar to e.g. Section 3.1 of \cite{BHLT25}). 

\subsubsection{Admissible K\"ahler metrics on \texorpdfstring{$\bbp(\BOne \oplus {\call}_k)\rightarrow \Sigma$}{P(1+Lk)}}\label{admreview}

We will consider the $\bbc^*$-action on $X$,  defined by diagonal multiplication on $\calo\rightarrow \Sigma$ and denote by $X^0$ the open dense subset of regular points of the action. Note that $X^0$ has the structure of a principal $\bbc^*$-bundle over the (stable) quotient under the $\bbc^*$-action of $X$. This corresponds to the $\bbc^*$-bundle over $\Sigma$,  obtained from the $\bbc \bbp^1$-bundle $X \rightarrow \Sigma$ by deleting the zero and infinity sections $E_0:=P(\BOne \oplus 0)$ and $E_\infty:=P(0\oplus {\call}_k)$.

There exist Hermitian metrics $h_0$ on $\BOne$ and $h_{\infty}$ on ${\call}_k$ whose  respective Chern connections  have curvatures
$0$ and  $\omega_{\Sigma}$, respectively. Let $r_0$ and $r_{\infty}$ denote the corresponding fibre-wise norm functions.
We denote the generator of the circle action on $\BOne$  by $K_{0}$  and the generator of the circle action on $\call_k$
by $K_{\infty}$. Using the Chern connections of $(\calo,  h_0)$  and $(\calo(k),h_{\infty})$, we let $\theta_0$ and $\theta_{\infty}$ be the connection 1-forms defined on the corresponding unitary bundles, i.e. satisfying
\begin{equation*}
\begin{split}
 \theta_0(K_0) &=1,  \ \ d\theta_0 = 0; \\
\theta_{\infty} (K_{\infty})&=1,  \ \ d\theta_{\infty} = -\omega_{\bbc \bbp^1}.
 \end{split}
 \end{equation*}
Thus, the fibre-wise Euclidean structures (viewed as tensors on the total spaces $\BOne$ and $\call_k$) take the following momentum/angular form
\begin{equation*}
g_0=\frac{{dz_0}\otimes d\gz_0}{2\gz_0} + 2\gz_0 ( \theta_0\otimes \theta_0), \ \ g_{\infty}=\frac{d\gz_{\infty}\otimes d\gz_{\infty}}{2\gz_{\infty}} + 2\gz_{\infty} (\theta_{\infty}\otimes \theta_{\infty}),
\end{equation*}
where  $\gz_0:= r^2_0/2,  \gz_{\infty} := r_{\infty}^2/2$ are the fibre-wise momentum coordinates.

Let $0<x<1$  be a fixed real number. We then  consider the smooth positive semidefinite tensor  on the total space $\BOne \oplus {\call}_k$:
\begin{equation*}
\frac{(1+x)\gz_0+ (1-x)\gz_{\infty}}{2x}g_{\Sigma} + g_{0} + g_{\infty}.
\end{equation*}
Considering the ``K\"ahler quotient''  for this tensor with respect to the $S^1$-action generated by $K_0 + K_{\infty}$ at the  level set $\gz_0 + \gz_{\infty}=2$ on $\BOne \oplus {\call}_k$, we denote by  $g_c$ the smooth (possibly degenerate) tensor field  induced on 
$X$ and by $\omega = g_c J_c$ the corresponding smooth $(1,1)$-form, where $J_c$ is the induced (canonical) complex structure on $X$. Letting $\gz:=(\gz_0-\gz_{\infty})/2\in [-1,1]$, the degenerate K\"ahler structure $(g_c, \omega)$ is written  on $X^0$ as:
\begin{equation}\label{g}
g_c=\frac{1+x\gz}{x}g_{\Sigma}+\frac {d\gz^2}
{\Theta_c (\gz)}+\Theta_c (\gz)\theta^2,\quad
\omega_x = \frac{1+x \gz}{x}\omega_{\Sigma} + d\gz \wedge
\theta,
\end{equation}
where $\Theta_c (\gz)= 1-\gz^2$ and $\theta := {\theta}_{0}- {\theta}_{\infty}$ satisfies
\begin{equation}\label{theta}
d\theta = \omega_{\Sigma}.
\end{equation}
We notice that $\gz$ is the momentum map with respect to $\omega$ of the induced $S^1$-action on $X$ corresponding to multiplication  on $\BOne$ or,  equivalently, the $S^1$-action induced by the push forward of $K=(K_0-K_{\infty})/2$ to the quotient 
space $X$. Thus,  $E_\infty = \gz^{-1}(-1), E_{0} = \gz^{-1}(1),$ and $X^0 = \gz^{-1}(-1,1)$. It follows that $(g_c, \omega)$ defines a K\"ahler metric on $X^0$. It is shown in \cite{HamFormsIII} that $(g_c, \omega)$ gives rise to a genuine, non-degenerate, smooth K\"ahler metric on $X$. Then, $\gz$ is the momentum map  with respect to $\omega$  of the $S^1$-action on $M$ by multiplication on  $E_0$. In particular, $\gz$ is a Killing potential and hence any affine function of $\gz$ is a Killing potential.

As observed in \cite{HamFormsIII}, due to \cite{Hwang_Singer}, if instead of $\Theta_c(\gz)$ we take in \eqref{g} any smooth function $\Theta(\gz)$ on $[-1,1]$, satisfying
\begin{align}
\label{positivity}
(i)\ \Theta(\gz) > 0, \quad -1 < \gz <1,\quad
(ii)\ \Theta(\pm 1) = 0,\quad
(iii)\ \Theta'(\pm 1) = \mp 2.
\end{align}
then
\begin{equation}\label{metric}
g_x=\frac{1+x\gz}{x}g_{\Sigma}+\frac {d\gz^2}
{\Theta (\gz)}+\Theta (\gz)\theta^2,\quad
\omega_x =\frac{1+x\gz}{x}\omega_{\Sigma} + d\gz \wedge
\theta,
\end{equation}
with \eqref{theta} yields a smooth $S^1$-invariant K\"ahler metric on $X$, compatible with the same symplectic form $\omega$. The corresponding  complex structure  is then given on $X^0$ by the horizontal lift  of the base complex structure on $\Sigma$ (with respect to the chosen Chern connections on $\BOne$ and ${\call}_k$) along with the requirement 
\begin{equation}\label{complex}
Jd\gz = \Theta \theta
\end{equation}
on the fibres. Such K\"ahler metrics on $X$ are called {\it admissible K\"ahler metrics}.

It is convenient to define a function $F(\mathfrak{z})$ by the formula
\begin{equation}
\Theta(\mathfrak{z})= \frac{F(\mathfrak{z})}{(1+x
\mathfrak{z})}.
\end{equation}
Since $(1+x
\mathfrak{z})$ is positive for $-1<\mathfrak{z}<1$, conditions
\eqref{positivity}
imply the following equivalent conditions on $F(\mathfrak{z})$:
\begin{align}
\label{positivityF}
(i)\ F(\mathfrak{z}) > 0, \quad -1 < \mathfrak{z} <1,\quad
(ii)\ F(\pm 1) = 0,\quad
(iii)\ F'(\pm 1) = \mp 2(1 \pm x).
\end{align}

Let $\mathcal{K}^{\rm adm}(X,\omega_x)$ denote the space of all admissible K\"ahler metrics associated to a given choice of $x\in (0,1)$. From the discussion above, $\mathcal{K}^{\rm adm}(X,\omega_x)$ is identified with a Fr\'echet space consisting of all smooth functions $\Theta(z)$ on $[-1,1]$ satisfying \eqref{positivity}. The space $\mathcal{K}^{\rm adm}(X, \omega_x)$  (associated  to a given choice of $x\in(0,1)$)  can also be equally parametrized by the fibre-wise symplectic potentials $u(\gz)$, where $u(\gz)$ is defined up to an affine-linear  function of $\gz$ by  $u''(\gz)= \frac{1}{\Theta(\gz)}$. 

One easily checks that the K\"ahler class of an admissible metric \eqref{metric} is given by
\begin{equation}\label{class1}
[\omega_x] = 4\pi E_{0}^*+ \frac{2\pi(1-x)k}{x} C^*,
\end{equation}
where $C$ denotes a fiber of the ruling $X \rightarrow
\Sigma$,
Up to an overall rescale, every K\"ahler class in the K\"ahler cone may be represented by an admissible K\"ahler metric. 

The scalar curvature of an admissible metric $g_x$ as above is given by 
\begin{equation}\label{admscal}
Scal_x=\frac{2sx}{1+x\gz}-\frac{F''(\gz)}{1+x\gz}.
\end{equation}
If $u=u(\gz)$ is a smooth function $\gz$ (and hence $u$ can be viewed as a smooth function on $X$) and if $\Delta_{g_x}$ denotes the Laplacian of 
$g_x$, then
\begin{equation}\label{admlaplace}
\Delta_x u =-\frac{\frac{d}{d\gz}\left[F(\gz)u'(\gz)\right]}{1+x\gz}.
\end{equation}

\subsubsection{CR Einstein--Hilbert functional on the non-trivial Hirzebruch surfaces}
\label{ex:negativeHirzebruchsurf}
We assume now that $0<s\leq 2$ and that 
$x\in \bbq\cap (0,1)$ so that up to a rescale, the K\"ahler class $[\omega_x]$ is integer and we have a Sasaki manifold over the first Hirzebruch surface via the Boothby-Wang construction.
Suppose $F(\gz)=(1-\gz^2)(1+x\gz)$ and $f(\gz)=c\gz+1$ is a killing potential, where $-1<c<1$. Up to an overall scale, the CR Einstein--Hilbert functional is then given by
$$
\begin{array}{ccl}
EH(x,F,f) &=& \frac{\int_{-1}^1 f^{-3}\left(f Scal_x-6 \Delta_g f - 12|df|^2_g f^{-1}\right)(1+x\gz)d\gz}{\left(\int_{-1}^1 f^{-3}(1+x\gz)d\gz\right)^{2/3}}\\
\\
&=& \frac{\int_{-1}^1 f^{-2}\left(2sx-F''(\gz)+6 \left(\frac{f'(\gz)}{f(\gz)}\right)^2 F(\gz)\right)d\gz}{\left(\int_{-1}^1 f^{-3}(1+x\gz)d\gz\right)^{2/3}}\\
\\
&=&\frac{2 \left(\frac{2(1- c x)}{\left(1-c^2\right)^2}\right)^{1/3} \left(1 + c^2 - 2 c x +(1-c^2)x s\right)}{1-c x},
\end{array}
$$
which is clearly positive. 
Indeed, as a function of $-1<c<1$ (representing (up to scale) a sub-cone of the Sasaki cone), this has a global positive minimum when
$-1<c<1$ is the unique solution, $c_x$ to
$$(sx-2) x+ (5 - s x + x^2)c - x (6 + sx)c^2+(3x^2+sx-1)c^3=0.$$
Note that $c_x\neq x$ and hence for the solution $c_x$ given by a choice of $s$ and $x$ we have that
$$s=\frac{3 c_x^3 x^2-c_x^3-6 c_x^2 x+c_x x^2+5 c_x-2 x}{(1-c_x^2) x (c_x-x)}$$
If we then choose $F=F_x$, given by
$$F_x(\gz) \underbrace{=}_{\text{Sec. 3.2 in \cite{BHLT25}}}F_{x,c_x}(\gz) \underbrace{=}_{\text{eliminating $s$}} \frac{(1-\gz^2) (1 + c_x \gz) (1 - c_x x - c_x \gz + x \gz)}{1-c_x^2},$$
and $f_x(\gz)=c_x\gz+1$ we have that $(g_x,f_x)$ is $4$-weighted extremal with $Scal_{f_x,4}$ equal to a constant time $f_x$ and hence this corresponds to a cscS metric in the Sasaki cone. Due to the existence of this positive cscS, we can also deduce that $EH$ has to be positive over the entire Sasaki cone. 

Now consider the deformation of $F_x(\gz)$ given by
$$F_a(\gz)=F_x(\gz) +a (1 - \gz^2)^2 (1 + x \gz), $$
where $a\geq 0$. Consider $\hat{f}(\gz)=2-\gz^2$. This is a positive smooth function that is decidedly not a killing potential. One may now calculate that up the the same overall scale as above, the CR Einstein--Hilbert functional is
$$
\begin{array}{cl}
&EH(x,F_a,\hat{f})\\
\\
=& \text{a value depending only on $x$ and $s$ (and hence $c_x$)}\\
\\
+& \left(\frac{18 \sqrt[3]{2}-19\cdot 2^{5/6} \sinh ^{-1}(1)}{\left(14+3 \sqrt{2} \sinh ^{-1}(1)\right)^{2/3}}\right)a.
\end{array}
$$
Since $\left(\frac{18 \sqrt[3]{2}-19\cdot 2^{5/6} \sinh ^{-1}(1)}{\left(14+3 \sqrt{2} \sinh ^{-1}(1)\right)^{2/3}}\right)\approx -1.0526$ is negative, this will be negative for sufficiently large $a$ values.

Note that similar observations can be made for higher genus ruled surfaces of Hirzebruch type, but since that case is not toric, we focus on the Hirzebruch surfaces here.

Likewise, similar observations can be made for the trivial Hirzebruch surface in \S\ref{sec:trivialHirzebruch} where we can deform the cscS solutions in the K\"ahler class and then
(when deformed sufficiently), the EH will be negative for a a certain non-Killing potential positive smooth function. 

This is all to demonstrate that negative energy sufficiently far away from positive cscS solutions seems easy to obtain explicitly.

\subsection{Non-Toric Examples}
\label{sec:nontoric}
In this section we will consider some non-toric examples. 
The first example will be $7$-dimensional and the second example will be $9$-dimensional. For each of these examples the Sasaki Reeb cone is just $2$-dimensional and thus the Sasaki rays may be parametrized by one real parameter. These examples are meant as a supplement to Question~\ref{q:uniqueness}. In particular, they give further evidence that the cscS condition in itself is certainly not sufficient for being a $CR$ $\bbt$-Yamabe minimizer...even if the Sasaki cone is just two dimensional. The second example, Example~\ref{multicscS2}, also shows that
it is possible for a cscS solution to be a local minimizer of the CR Einstein--Hilbert functional over the Sasaki cone and yet not be a $CR$ $\bbt$-Yamabe minimizer.

\begin{example}\label{multicscS1}
Consider the product $(\bbc\bbp^2\#q\overline{\bbc\bbp}^2) \times \bbc\bbp^1$
for $4 \leq q \leq 8$ (where $\bbc\bbp^2$ is blown up at
$q$ generic points). As is well known, $\bbc\bbp^2\#q\overline{\bbc\bbp}^2 $, which has complex dimension $3$, admits a KE structure $(g_k,\omega_k)$
which, for any choice of $k\in \bbq^+$, can be rescaled such that the scalar curvature equals $k$. 
Now we consider the product K\"ahler structure $(g,\omega):=(g_k+g_F,\omega_k+\omega_{\bbc\bbp^1})$ on $(\bbc\bbp^2\#q\overline{\bbc\bbp}^2)\times \bbc\bbp^1$, where
$(g_F,\omega_{\bbc\bbp^1})$ is a K\"ahler structure on $ \bbc\bbp^1$ given by the standard toric set-up as in
\eqref{cp1toric} and \eqref{endpointscp1}.

We may think of $\gz:\bbc\bbp^1 \rightarrow [-1,1]$ as a moment map of a choice of a $S^1$ action on $\bbc\bbp^1$ and $\omega_{\bbc\bbp^1}$
and then the pull-back of $\gz$ to $(\bbc\bbp^2\#q\overline{\bbc\bbp}^2 ) \times \bbc\bbp^1$ (with K\"ahler structure $(g,\omega)$) is a Killing potential.

Since $k\in \bbq^+$, we have that $[\omega_k/2\pi]$ is a rational K\"ahler class.
Thus, the class $[\left(\omega_k+\omega_{\bbc\bbp^1}\right)/2\pi]$ is also a rational K\"ahler class and hence determines a canonical primitive polarization which via a Boothby-Wang construction gives a Sasaki manifold $(N,D,\eta_{\omega},J_F)$, where the choice of $F$ determines the $CR$-structure on $D$.
By considering different values of $k\in \bbq^+$, we are implicitly considering all the possible polarization choices for 
$(\bbc\bbp^2\#q\overline{\bbc\bbp}^2) \times \bbc\bbp^1$. 

The scalar curvature of $g$ is given by 
$Scal=k-F''(\gz)$ (pulled back to the product) and if $f=f(\gz)$ is a smooth function of $\gz$ (and hence $f$ can be viewed as a smooth function on $(\bbc\bbp^2\#q\overline{\bbc\bbp}^2) \times \bbc\bbp^1$)
and if $\Delta_{g}$ denotes the Laplacian of 
$g$, then $\Delta_g f =-\frac{d}{d\gz}\left[F(\gz)f'(\gz)\right]$.

We are now ready to write up $Scal_{f,p}(g)$ for $p=2+1+2=5$, $f_c= c\gz+1$, and $-1<c<1$:
\begin{equation}
Scal_{f_c,5}(g)= (1+c\gz)^2 \left(k-F''(\gz)\right)  +8c (1+c\gz)F'(\gz) - 20c^2F(\gz).
\end{equation}
First we are looking for extremal Sasaki metrics, that is, solutions to 
$$ (1+c\gz)^2 \left(k-F''(\gz)\right)  +8c (1+c\gz)F'(\gz) - 20c^2F(\gz)=A\gz+B$$
where $A,B$ are real constants.
This can be rewritten as
$$-(1+c\gz)^2 F''(\gz) +8c (1+c\gz)F'(\gz) - 20c^2F(\gz)=-k(1+c\gz)^2+A\gz+B$$
or
$$\frac{d^2}{d\gz^2}\left[\frac{F(\gz)}{(c\gz+1)^4}\right]=\left(k(1+c\gz)^2-A\gz-B\right)(c\gz+1)^{-6},$$
which, due to the endpoint conditions on $F(\gz)$, determines ($A$, $B$ and) $F(\gz)$ in terms of $k$, and $c$. 
We have
$$A=\frac{2 c \left((k-6)c^4 k-(6k+12) c^2 +5 k-30\right)}{c^4-2 c^2+5},$$
$$ B =\frac{(5k-30) c^6 k-(19k+6) c^4 -(282+k)c^2 +15 k+30}{3 \left(c^4-2 c^2+5\right)},$$
and
$$F(\gz)=\frac{(1-\gz^2) \left( 6 \left(c^4 (\gz^2+1)+c^2 +10-2 c^3 \gz^3+2 c^3 \gz-5 c^2 \gz^2\right)+k c^2 (1-\gz^2) (5 - c^2 +2 c \gz) \right)}
{12 \left(c^4-2 c^2+5\right)}$$
which satisfies the required $F(\gz) > 0$ for $-1 < \gz <1$. At the Sasaki level, this means that the entire Sasaki cone (which is $2$-dimensional here) is exhausted by
extremal Sasaki metrics. Among these we can now look for cscS:

It is easy to see that that $A\gz+B$ is a constant multiple of $c\gz+1$ (meaning the  Reeb vector field associated to $c$ is cscS) if and only if
$A-Bc=0$. Using the above formula for $A$ and $B$, this condition is equivalent to
\begin{equation}\label{7DcscS}
c (3(14 - k) +4 k c^2  +(6-k) c^4)=0.
\end{equation}
Note that the solution $c=0$ corresponds to the case of the transverse product metric on $(\bbc\bbp^2\#q\overline{\bbc\bbp}^2) \times \bbc\bbp^1$
being a product of cscK metric.

Now we will use our extremal Sasaki solutions to consider the CR Einstein--Hilbert functional restricted to the Sasaki cone (parametrized up to scale by $c\in (-1,1)$), $EH(c):=
EH(k,F,f_c)$. Up to an overall scale we have
$$EH(c)=\frac{\int_{-1}^1 (A z + B) (c z + 1)^{-5}d\gz}{\left(\int_{-1}^1(c z + 1)^{-4}d\gz\right)^{\frac{3}{4}}}
=\frac{\sqrt[4]{54} \left(c^2 (6-k) +(k+2)\right)}{(1-c^2)^{3/4} (c^2+3)^{3/4}}.$$
Naturally, the critical points of $EH(c)$ correspond to the (cscS) solutions $c\in (-1,1)$ to \eqref{7DcscS} and it is not hard to see that for $k\in \bbq \cap (0,14]$, we have just one critical point, $c=0$, giving a global minimum on the Sasaki cone, and for $k\in \bbq \cap (14,+\infty)$ we have three critical points, $c=0$, and $c=\pm \sqrt{\frac{2 k-\sqrt{k^2+60 k-252}}{k-6}}$. In this case $EH(c)$ has a local maximum at $c=0$ whereas the other critical points are the locations of the global minimum value of the CR Einstein--Hilbert functional over the Sasaki cone. This example represent a simple non-toric case of a Sasaki manifolds with several cscS rays in the Sasaki cone and with one of these cscS (when $c=0$) decidedly not being a $CR$-Yamabe minimizer.
\end{example}

\begin{remark}
Note that we can view the Sasaki manifold considered in Example~\ref{multicscS1} as a so-called \cite{BoTo16,BoTo22} $S^3$-join, $M\star_{l_1,l_2}S^3$, where $M$ is the Boothby-Wang constructed Sasaki manifold over (an appropriate rescale of $(\bbc\bbp^2\#q\overline{\bbc\bbp}^2,\omega_k,g_k)$, $S^3$ is the standard Sasaki structure over $\bbc\bbp^1$, and $l_1,l_2$ are co-prime positive integers implicitly determined by the value of $k$. The value of $l_2$ will grow as the value of $k$ grows. In this light, existence of multiple cscS rays when $k$ is sufficiently large is predicted by Section 6.4 (and Theorems 1.2 and 1.3) of \cite{BoTo16}. 
\end{remark}

\begin{example}\label{multicscS2}
We will now take a close look at Example 4.6 from \cite{BoTo26}. For the convenience of the reader we will repeat some of the details from \cite{BoTo26}. After this we will add an analysis of the CR Einstein--Hilbert functional in this case. We use the set-up in \S\ref{admreview} as well
as a basic product metric understanding. Assume that $N_1 = \bbc\bbp^2\#q\overline{\bbc\bbp}^2$
for $4 \leq q \leq 8$ ($\bbc\bbp^2$ blown up at
$q$ generic points). Then $N_1$ admits a KE metric $g_a$
which, for any choice of $a\in \bbq^+$, can be rescaled such that the scalar curvature equals $2a$. Note that $N_1$ admits no non-constant Killing potentials.
Let $N_2$ equal the total space of $\bbp(\BOne \oplus {\call}_k)\rightarrow \Sigma$ as in \S\ref{admreview} and assume that the genus of $N_2$, $\gg$, is at least two.

Given $a\in \bbq^+$, $x\in (0,1)\cap \bbq$, $s=\frac{2(1-\gg)}{k}$, and the K\"ahler structure on $N_2$ given by \eqref{metric}, we consider the product K\"ahler metric $g=g_a+g_x$ on $N_1\times N_2$
with product K\"ahler form $\omega=\omega_a+\omega_x$. After an appropriate rescaling this gives us a polarization of 
$N_1\times N_2$ yielding a $9$-dimensional Sasaki manifold with a $2$-dimensional Sasaki cone.
Specifically, for $-1<c<1$, the pull-back of $f=1+c\gz$ to $N_1\times N_2$ is a positive smooth real Killing potential and defines an element in the 
(unreduced) Sasaki cone. Since $N_1$ allows no non-trivial Killing potentials and since $\gg\geq 2$, we see that, up to scale,
$-1<c<1$ parametrizes the Sasaki cone. Since the complex dimension of $N_1\times N_2$ is $4$, we get that ($p=6$ and)
$$
f Scal^{TW}(a,s,x,F,f)=Scal_{f,6}(g)= f^2 Scal(g)  -10 f \Delta_g f - 30 |df|^2_g.
$$
Note that for any smooth, positive, real function $f$ on $N_2$, viewed via a pull-back as a function on $N_1\times N_2$, also denoted $f$, we have that
\begin{equation}\label{scal9dimex}
Scal_{f,6}(g)=\frac{ f^2 \left(2a (1+x\gz)+2 s x - F'' \right)  + 10 f \frac{d}{d\gz}(F\, f')- 30 F (f')^2}{(1+x\gz)},
\end{equation}
where $F(\gz)=(1+x\gz)\Theta(\gz)$ determines $g_x$ in \eqref{metric}.
Using the killing potential $f=1+c\gz$, $Scal_{f,6}(g)$ is then a killing potential $A_1\gz+A_2$ (leading to a Sasaki extremal ray) if and only if $F(\gz)$ is given by
\begin{equation}\label{thisisF9dimex}
F(\gz)= (c\gz+1)^5\left(\frac{2(1-x)}{(1-c)^{5}}(\gz+1) + \int_{-1}^{\gz}Q(t)(\gz-t)dt\right),
\end{equation}
where 
$$
Q(\gz)=(c\gz+1)^{-7}\left((c\gz+1)^2\left(2a(1+x\gz)+2sx\right)-(A_1\gz + A_2)(1+x\gz)\right),
$$
and $A_1$, $A_2$ are solutions to the system
\begin{equation}\label{A1A2}
\begin{array}{ccl}
\alpha_{1,-7} A_1+\alpha_{0,-7} A_2 &=&2\beta_{0,-5}\\
\\
\alpha_{2,-7} A_1+\alpha_{1,-7} A_2 &=&2\beta_{1,-5},
\end{array}
\end{equation}
with 
$$
\alpha_{r,q}=\int_{-1}^1 (ct+1)^q t^r(1+xt)dt
$$
and
$$
\beta_{r,q}=\int_{-1}^1\left(a(1+xt)+sx\right) t^r(ct+1)^qdt +\left( (-1)^r(1-c)^q(1-x)+(1+c)^q(1+x)\right),
$$
Finally, $A_1\gz+A_2$ is a constant multiple of $c\gz+1$ (leading to a cscS ray) precisely when
\begin{equation}\label{cscS9dimex}
\alpha_{1,-6}\beta_{0,-5}-\alpha_{0,-6}\beta_{1,-(5)}=0.
\end{equation}

We now choose $k$ and $\gg$ such that $s=-3$ (e.g. we could have $k=2$ and $\gg=4$) and let $a$ and $x$ be chosen such that
$a=\frac{3 \left(x^4+7\right)}{\left(1-x^2\right) \left(3-x^2\right)}$. In that case, the left hand side of \eqref{cscS9dimex} equals
$$\frac{4 (x-c) h(x,c)}{15 (1-c^2)^9  (1-x^2)  \left(3-x^2\right)},$$
where 
$$
\begin{array}{ccl}
h(x,c)& = & (-12 + 9 x + 8 x^2 - 12 x^3 + 20 x^4 + 3 x^5) c^6\\
&+ &2 x (9 - 16 x^2 + 15 x^4)c^5\\
&+ & (-18 - 27 x - 72 x^2 + 36 x^3 - 38 x^4 - 9 x^5)c^4\\
&+& 8 x (5 - x^2) (9 - 7 x^2)c^3\\
&+ &(-228 + 63 x + 408 x^2 - 84 x^3 - 68 x^4 + 21 x^5)c^2\\
&- & 10 x (9-x^2) c\\
& - &5 (1 -x^2 ) (2 + 3 x) (3 - x^2).
\end{array}
$$
Thus \eqref{cscS9dimex} is always solved by $c=x$ and any solutions beyond this, would be roots $-1<c<1$ of $h(x,c)$ as a polynomial in $c$.
First we note that with $c=c_1:=x$, $a=\frac{3 \left(x^4+7\right)}{\left(1-x^2\right) \left(3-x^2\right)}$, and $s=-3$, equation \eqref{thisisF9dimex}
gives us that 
\begin{equation}\label{9dimexquasiregcscS}
F(\gz):=F_{c_1}(\gz) =\frac{ (1 - \gz^2) (1 + x \gz)^2 (3 + x^2 - x(3-x^2) \gz  - 
    2 x^2 \gz^2)}{(1 - x^2)(3 - x^2)},
    \end{equation}
 which is easily seen to satisfy  {\em (i)} of \eqref{positivityF}. Thus we have at least one cscS ray in the Sasaki cone.
As we vary the value of $0<x<1$ we notice that the nature of $h(x,c)$ will change. If $x\in (0,1)\cap \bbq$ is sufficiently small, $h(x,c)<0$ for $-1<c<1$ and thus no further cscS rays exist in the 
corresponding Sasaki cone. On the other hand e.g. 
$$
\begin{array}{ccl}
h(8/10,c)&= &\frac{2 \left(5236 c^6+12260 c^5-134003 c^4+197072 c^3+30532 c^2-107380 c-29205\right)}{3125}\\
\\
\text{and}\\
\\
h(9/10,c)&=& \frac{3 \left(290849 c^6+352890 c^5-3487407 c^4+3348648 c^3+2190983 c^2-2503170 c-325945\right)}{100000}
\end{array}
$$
each have two roots in $(-1,0)$. This can for example be seen directly by noticing that for both $x=8/10$ and $x=9/10$ we have $h(x,0)<0$, $h(x,-2/5)>0$, and $h(x,-9/10)<0$.

Now, for $x=8/10$, $a=\frac{3 \left(x^4+7\right)}{\left(1-x^2\right) \left(3-x^2\right)}=4631/177$, and $s=-3$, $F(\gz)$ given by \eqref{thisisF9dimex}, 
is numerically seen to satisfy  {\em (i)} of \eqref{positivityF} for all values of $c\in (-1,1)$. This means that the Sasaki cone is exhausted by extremal Sasaki rays and we
have three distinct cscS rays in this cone.

\begin{remark} For $x=9/10$,  $a=\frac{3 \left(x^4+7\right)}{\left(1-x^2\right) \left(3-x^2\right)}= 76561/1387$, and $s=-3$, $F(\gz)$ given by \eqref{thisisF9dimex}, 
does not always satisfy  {\em (i)} of \eqref{positivityF}. Numerically it can be seen that there exists a small closed interval $I=(c_l,c_r) \subset (-1,1)$ with $c_l<0<c_r$
such that for
$c\in I$, $F(\gz)$ does not satisfy {\em (i)} of \eqref{positivityF} while for $c\in (-1,1)\setminus I$, $F(\gz)$ satisfies {\em (i)} of \eqref{positivityF}. 
Thus, due to Theorem 3/Corollary 7.3 in \cite{ApostolovCalderbankLegendre_weighted}, the extremal Sasaki cone is disconnected
and is represented up to scale by two open intervals $I_l=(-1,c_l)$ and $I_r=(c_r,1)$.
Any ray in the ``moat'' determined by $c\in I$ has no extremal Sasaki metric at all. Finally, numerically the roots of $h(9/10,c)$ in $(-1,0)$ are
$c_2\approx -0.120$ and $c_3\approx -0.786$. It can be checked that $c_1=9/10\in I_r$, $c_2\in I$, and $c_3\in I_l$. Thus $c_1$ and $c_3$ correspond to genuine cscS rays, whereas $c_2$ does not.
The two cscS rays are thus separated by a non-extremal moat in the Sasaki cone.
\end{remark}

 We now use \eqref{scal9dimex} to see that for any smooth, positive, real function $f$ on $N_2$, viewed via a pull-back as a function on $N_1\times N_2$, also denoted $f$, we have that, up to scale,

\begin{equation}
\begin{array}{ccl}
EH(a,s,x,F,f)&=& \frac{\int_{-1}^{1}f(\gz)^{-6} Scal_{f,6}(g) (1+x\gz)d\gz}{\left(\int_{-1}^1f(\gz)^{-5}(1+x\gz)d\gz\right)^{\frac{4}{5}}}\\

\\
&=& \frac{\int_{-1}^{1}f(\gz)^{-6} \left(f(\gz)^2 \left(2a(1+x\gz)+2sx -F''(\gz)\right)  +10 f(\gz)\frac{d}{dz}\left(F(\gz)f'(\gz)\right) - 30(f'(\gz))^2 F(\gz) \right)d\gz}{\left(\int_{-1}^1f(\gz)^{-5}(1+x\gz)d\gz\right)^{\frac{4}{5}}}\\
\\
&=& \frac{\int_{-1}^{1} \left(f(\gz)^{-4} \left(2a(1+x\gz)+2sx -F''(\gz)\right)   +10f(\gz)^{-5}\frac{d}{dz}\left(F(\gz)f'(\gz)\right) - 30\frac{(f'(\gz))^2}{f(\gz)^6}F(\gz) \right)d\gz}{\left(\int_{-1}^1f(\gz)^{-5}(1+x\gz)d\gz\right)^{\frac{4}{5}}}\\
\\
&=& \frac{\int_{-1}^{1} f(\gz)^{-4} \left(2a(1+x\gz)+2sx-F''(\gz) +20\left(\frac{f'(\gz)}{f(\gz)}\right)^2F(\gz)\right) d\gz}{\left(\int_{-1}^1f(\gz)^{-5}(1+x\gz)d\gz\right)^{\frac{4}{5}}},
\end{array}
\end{equation}
where the last equality comes from integration by parts and the endpoint conditions on $F(\gz)$.

Assume now that $s=-3$ and 
$a=\frac{3 \left(x^4+7\right)}{\left(1-x^2\right) \left(3-x^2\right)}$ as above and choose e.g. $F(\gz)$ as in \eqref{9dimexquasiregcscS}.
Restricting $EH(a,k,x,F,f)$ to the Sasaki cone, we assume that $f=1+c\gz$ and consider the resulting function $EH(c)$ for $-1<c<1$.
We get
$$
EH(c)=\frac{2\cdot 2^{1/5} \cdot 3^{4/5}g(c)}{(1-c^2)^{\frac{4}{5}}(3 + 3 c^2 - 5 c x - c^3 x)^{\frac{4}{5}}(1-x^2)(3-x^2)},
$$
where
$$
\begin{array}{ccl}
g(c)& = & 24 - 9 x - 4 x^2 + 12 x^3 + 4 x^4 - 3 x^5 \\
\\
&- & 8 x (5 - 2 x^2 + x^4)c\\
\\
&+ & 2 (2 + 3 x - 12 x^2 - 4 x^3 + 2 x^4 + x^5)c^2\\
\\
&+ & 16 x (1 + x^2)c^3\\
\\
&-& (4 - 3 x + 4 x^2 + 4 x^3 - x^5)c^4. 
\end{array}
$$
[As expected $EH'(c)=0$ is equivalent to \eqref{cscS9dimex} for $s=-3$ and 
$a=\frac{3 \left(x^4+7\right)}{\left(1-x^2\right) \left(3-x^2\right)}$.]
For $x=8/10$, we have that
$$EH(c)=\frac{8 \sqrt[5]{\frac{2}{15}} \left(16437-4594 c^4+16400 c^3-6533 c^2-20648 c\right)}{177 (1-c^2)^{4/5} \left(15-4 c^3+15 c^2-20 c\right)^{4/5}}$$
and the graph of this function is depicted in Figure~\ref{9dimexfigure}. It is a bit subtle to see, but it is straightforward (numerical) calculus to see that this graph has
three relative extrema; a local minimum of about $58.1955$ at $c\approx -0.4834$, a local maximum of about $58.2775$ at $c\approx -0.3123$, and a global minimum of
$\frac{1528 \sqrt[5]{\frac{182}{5}}}{59\cdot 3^{2/5}}\approx 34.2486$ at $c=4/5$. From the discussion above we know that all three critical points correspond to genuine cscS rays in the Sasaki cone.
As such, this example shows that not all cscS metrics are $CR$ $\bbt$-Yamabe minimizers and even cscS metrics that are local minima in the Sasaki-Reeb cone, might not be overall minimizers.
\end{example}

\begin{figure}[t]
\includegraphics[scale=0.5]{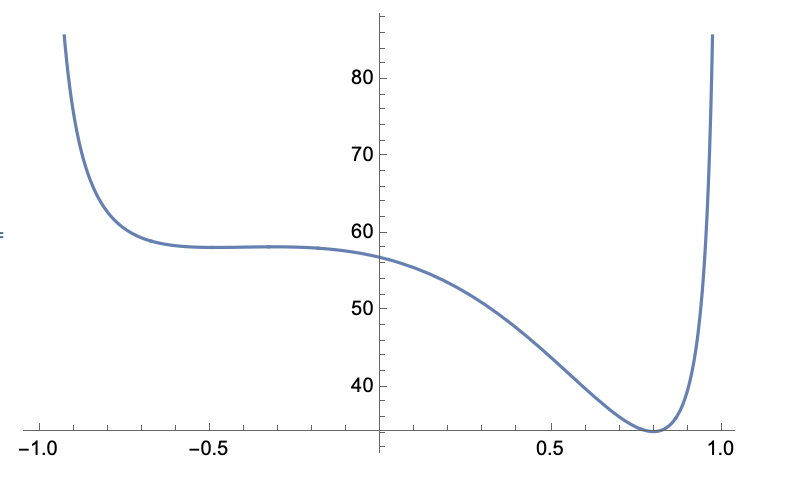}
\caption{Graph of $EH(c)$ when $x=8/10$.}
\label{9dimexfigure}
\end{figure}



\begin{thebibliography}{90}

\bibitem[Abr98]{Abreu_toric}
Miguel Abreu.
\newblock K\"{a}hler geometry of toric varieties and extremal metrics.
\newblock {\em Internat. J. Math.}, 9(6):641--651, 1998.

\bibitem[Abr10]{AbreuSasaki}
Miguel Abreu.
\newblock K\"ahler-{S}asaki geometry of toric symplectic cones in action-angle
  coordinates.
\newblock {\em Port. Math.}, 67(2):121--153, 2010.

\bibitem[ACGTF04]{HamFormsII}
Vestislav Apostolov, David M.~J. Calderbank, Paul Gauduchon, and Christina~W.
  T{\o}nnesen-Friedman.
\newblock Hamiltonian 2-forms in {K}\"ahler geometry. {II}. {G}lobal
  classification.
\newblock {\em J. Differential Geom.}, 68(2):277--345, 2004.

\bibitem[ACGTF08]{HamFormsIII}
Vestislav Apostolov, David M.~J. Calderbank, Paul Gauduchon, and Christina~W.
  T{\o}nnesen-Friedman.
\newblock Hamiltonian 2-forms in {K}{\"a}hler geometry {III}, extremal metrics
  and stability.
\newblock {\em Invent. Math.}, 173(3):547--601, 2008.

\bibitem[ACL21]{ApostolovCalderbankLegendre_weighted}
Vestislav Apostolov, David M.~J. Calderbank, and Eveline Legendre.
\newblock Weighted {K}-stability of polarized varieties and extremality of
  {S}asaki manifolds.
\newblock {\em Adv. Math.}, 391:Paper No. 107969, 63, 2021.

\bibitem[ACMY24]{AfeltraMalchiodi_EH}
Claudio Afeltra, Jih-Hsin Cheng, Andrea Malchiodi, and Paul Yang.
\newblock On the variation of the {E}instein-{H}ilbert action in
  pseudohermitian geometry.
\newblock {\em J. Reine Angew. Math.}, 813:81--102, 2024.
\newblock With an appendix by Xiaodong Wang.

\bibitem[AHP26]{Afeltra_equiv}
Claudio Afeltra, Pak~Tung Ho, and Andrea Pinamonti.
\newblock Compactness in dimension five and equivariant noncompactness for the
  {CR} {Y}amabe problem.
\newblock \href{https://arxiv.org/abs/2603.12157v1}{ arXiv:2603.12157
  [math.AP]}, 2026.

\bibitem[AM19]{ApostolovMaschler_EinsteinMaxwell}
Vestislav Apostolov and Gideon Maschler.
\newblock Conformally {K}\"ahler, {E}instein-{M}axwell geometry.
\newblock {\em J. Eur. Math. Soc. (JEMS)}, 21(5):1319--1360, 2019.

\bibitem[AMP17]{S1_Yamabe_equiv}
Bernd Ammann, Farid Madani, and Mihaela Pilca.
\newblock The {$S^1$}-equivariant {Y}amabe invariant of 3-manifolds.
\newblock {\em Int. Math. Res. Not. IMRN}, (20):6310--6328, 2017.

\bibitem[Apo19]{Apostolov_toric}
Vestislav Apostolov.
\newblock The {K}{\"a}hler geometry of toric manifolds.
\newblock Lecture Notes of CIRM winter school.
  \href{https://arxiv.org/pdf/2208.12493} {arxiv:2208.12493 [math.DG]}, 2019.

\bibitem[Aub75]{Aubin_yamabe-est}
Thierry Aubin.
\newblock \'etude d'un certain type d'\'equations diff\'erentielles non
  lin\'eaires.
\newblock {\em C. R. Acad. Sci. Paris S\'er. A-B}, 280(7):455--457, 1975.

\bibitem[Ber83]{Bergery_equiv}
Bérard Bergery.
\newblock Scalar curvature and isometry group.
\newblock {\em Kaigai Publications}, pages 9 -- 28, 1983.

\bibitem[BG00]{BG:contactNOTE}
Charles~P. Boyer and Krzysztof Galicki.
\newblock A note on toric contact geometry.
\newblock {\em J. Geom. Phys.}, 35(4):288--298, 2000.

\bibitem[BG08]{BoyerGalicki}
Charles~P. Boyer and Krzysztof Galicki.
\newblock {\em Sasakian geometry}.
\newblock Oxford Mathematical Monographs. Oxford University Press, Oxford,
  2008.

\bibitem[BHL18]{BoyerHuangLegendre_DH}
Charles Boyer, Hongnian Huang, and Eveline Legendre.
\newblock An application of the {D}uistermaat-{H}eckman theorem and its
  extensions in {S}asaki geometry.
\newblock {\em Geom. Topol.}, 22(7):4205--4234, 2018.

\bibitem[BHLTF17]{EinsteinHilbert-SasakiFutaki}
Charles~P. Boyer, Hongnian Huang, Eveline Legendre, and Christina~W. T{\o}nnesen
  Friedman.
\newblock The {E}instein-{H}ilbert functional and the {S}asaki-{F}utaki
  invariant.
\newblock {\em Int. Math. Res. Not.}, (7):1942--1974, 2017.

\bibitem[BHLTF20]{BHLTF20}
Charles~P. Boyer, Hongnian Huang, Eveline Legendre, and Christina~W.
  Tønnesen-Friedman.
\newblock {\em Some Open Problems in Sasaki Geometry}, page 143–168.
\newblock London Mathematical Society Lecture Note Series. Cambridge University
  Press, 2020.

\bibitem[BHLTF25]{BHLT25}
Charles~P. Boyer, Hongnian Huang, Eveline Legendre, and Christina~W. T{\o}nnesen
  Friedman.
\newblock Twins in {K}\"ahler and {S}asaki geometry.
\newblock {\em J. Geom. Phys.}, 216:Paper No. 105591, 29, 2025.

\bibitem[BM93]{BanyagaMolino1}
A.~Banyaga and P.~Molino.
\newblock G\'eom\'etrie des formes de contact compl\`etement int\'egrables de
  type toriques.
\newblock In {\em S\'eminaire {G}aston {D}arboux de {G}\'eom\'etrie et
  {T}opologie {D}iff\'erentielle, 1991--1992 ({M}ontpellier)}, pages 1--25.
  Univ. Montpellier II, Montpellier, 1993.

\bibitem[BTF16]{BoTo16}
Charles~P. Boyer and Christina~W. T{\o}nnesen-Friedman.
\newblock The {S}asaki join, {H}amiltonian 2-forms, and constant scalar
  curvature.
\newblock {\em J. Geom. Anal.}, 26(2):1023--1060, 2016.

\bibitem[BTF22]{BoTo22}
Charles~P. Boyer and Christina~W. T{\o}nnesen-Friedman.
\newblock The {$S^3_w$} {S}asaki join construction.
\newblock {\em J. Math. Soc. Japan}, 74(4):1335--1371, 2022.

\bibitem[BTF26]{BoTo26}
Charles~P. Boyer and Christina~W. T{\o}nnesen-Friedman.
\newblock Robustness of {CSC} {S}asaki {E}xistence under the {J}oin operation.
\newblock \href{https://doi.org/10.48550/arXiv.2604.09893}{arXiv.2604.09893
  [math.DG]}, 2026.

\bibitem[Cal82]{Calabi_extremal_metrics}
Eugenio Calabi.
\newblock Extremal {K}{\"a}hler metrics.
\newblock In {\em Seminar on Differential Geometry}, volume 102 of {\em Annals
  of Mathematical Studies}, pages 259--290. Princeton University Press, 1982.

\bibitem[Del88]{Delzant_polytope}
Thomas Delzant.
\newblock Hamiltoniens p\'{e}riodiques et images convexes de l'application
  moment.
\newblock {\em Bull. Soc. Math. France}, 116(3):315--339, 1988.

\bibitem[Don02]{Donaldson_stability_toric}
Simon~K. Donaldson.
\newblock Scalar curvature and stability of toric varieties.
\newblock {\em J. Differential Geom.}, 62(2):289--349, 2002.

\bibitem[FO18]{futakiono18}
Akito Futaki and Hajime Ono.
\newblock Volume minimization and obstructions to solving some problems in
  {K}\"ahler geometry.
\newblock {\em ICCM Not.}, 6(2):51--60, 2018.

\bibitem[FOW09]{FutakiOnoWang}
Akito Futaki, Hajime Ono, and Guofang Wang.
\newblock Transverse {K}\"ahler geometry of {S}asaki manifolds and toric
  {S}asaki-{E}instein manifolds.
\newblock {\em J. Differential Geom.}, 83(3):585--635, 2009.

\bibitem[Gam01]{Gamara}
Najoua Gamara.
\newblock The {CR} {Y}amabe conjecture---the case {$n=1$}.
\newblock {\em J. Eur. Math. Soc. (JEMS)}, 3(2):105--137, 2001.

\bibitem[Gui94]{Guillemin_toric}
Victor Guillemin.
\newblock K{\"a}hler structures on toric varieties.
\newblock {\em J. Differential Geom.}, 40(2):285--309, 1994.

\bibitem[GY01]{Gamara_Yacoub}
Najoua Gamara and Ridha Yacoub.
\newblock C{R} {Y}amabe conjecture---the conformally flat case.
\newblock {\em Pacific J. Math.}, 201(1):121--175, 2001.

\bibitem[Ho25]{Ho_CRYam-equiv}
Pak~Tung Ho.
\newblock Equivariant {CR} {Y}amabe problem.
\newblock {\em Ann. Mat. Pura Appl. (4)}, 204(1):289--306, 2025.

\bibitem[HS02]{Hwang_Singer}
Andrew~D. Hwang and Michael~A. Singer.
\newblock A momentum construction for circle-invariant {K}\"ahler metrics.
\newblock {\em Trans. Amer. Math. Soc.}, 354(6):2285--2325, 2002.

\bibitem[HV93]{HebeyVaugon_equiv}
Emmanuel Hebey and Michel Vaugon.
\newblock Le probl\`eme de {Y}amabe \'equivariant.
\newblock {\em Bull. Sci. Math.}, 117(2):241--286, 1993.

\bibitem[JL87]{JerisonLee_CRYamabe}
David Jerison and John~M. Lee.
\newblock The {Y}amabe problem on {CR} manifolds.
\newblock {\em J. Differential Geom.}, 25(2):167--197, 1987.

\bibitem[JL88]{JerisonLee_extremalsCRYam}
David Jerison and John~M. Lee.
\newblock Extremals for the {S}obolev inequality on the {H}eisenberg group and
  the {CR} {Y}amabe problem.
\newblock {\em J. Amer. Math. Soc.}, 1(1):1--13, 1988.

\bibitem[JL89]{JerisonLee_normalcoords}
David Jerison and John~M. Lee.
\newblock Intrinsic {CR} normal coordinates and the {CR} {Y}amabe problem.
\newblock {\em J. Differential Geom.}, 29(2):303--343, 1989.

\bibitem[Lah19]{Lahdili_weighted}
Abdellah Lahdili.
\newblock K\"ahler metrics with constant weighted scalar curvature and weighted
  {K}-stability.
\newblock {\em Proc. Lond. Math. Soc. (3)}, 119(4):1065--1114, 2019.

\bibitem[LeB15]{LeBrun_EinstMax}
Claude LeBrun.
\newblock The {E}instein-{M}axwell equations, {K}\"ahler metrics, and
  {H}ermitian geometry.
\newblock {\em J. Geom. Phys.}, 91:163--171, 2015.

\bibitem[LeB23]{lebrun23}
Claude LeBrun.
\newblock Yamabe invariants, homogeneous spaces, and rational complex surfaces.
\newblock {\em SIGMA Symmetry Integrability Geom. Methods Appl.}, 19:Paper No.
  027, 11, 2023.

\bibitem[Leg11]{Legendre_toricSasaki}
Eveline Legendre.
\newblock Existence and non-uniqueness of constant scalar curvature toric
  {S}asaki metrics.
\newblock {\em Compos. Math.}, 147(5):1613--1634, 2011.

\bibitem[Ler03]{Lerman:contactToric}
Eugene Lerman.
\newblock Contact toric manifolds.
\newblock {\em J. Symplectic Geom.}, 1(4):785--828, 2003.

\bibitem[LLS23]{LahdiliLegendreScarpa_EHDF}
Abdellah Lahdili, Eveline Legendre, and Carlo Scarpa.
\newblock The {E}instein-{H}ilbert functional and the {D}onaldson-{F}utaki
  {I}nvariant.
\newblock \href{https://arxiv.org/abs/2310.11625}{arXiv:2310.11625 [math.DG]},
  2023.

\bibitem[LLS25]{LahdiliLegendreScarpa_CRYam}
Abdellah Lahdili, Eveline Legendre, and Carlo Scarpa.
\newblock The {CR} {Y}amabe invariant and constant scalar curvature {S}asaki
  metrics.
\newblock \href{https://arxiv.org/abs/2509.00743}{arXiv:2509.00743 [math.DG]},
  2025.

\bibitem[LT97]{LermanTolman}
Eugene Lerman and Susan Tolman.
\newblock Hamiltonian torus actions on symplectic orbifolds and toric
  varieties.
\newblock {\em Trans. Amer. Math. Soc.}, 349(10):4201--4230, 1997.

\bibitem[LW13]{LiWang_ObataCR}
Song-Ying Li and Xiaodong Wang.
\newblock An {O}bata-type theorem in {CR} geometry.
\newblock {\em J. Differential Geom.}, 95(3):483--502, 2013.

\bibitem[Mad10]{Madani_equivconj1}
Farid Madani.
\newblock Equivariant {Y}amabe problem and {H}ebey-{V}augon conjecture.
\newblock {\em J. Funct. Anal.}, 258(1):241--254, 2010.

\bibitem[Mad12]{Madani_equivconj2}
Farid Madani.
\newblock Hebey-{V}augon conjecture {II}.
\newblock {\em C. R. Math. Acad. Sci. Paris}, 350(17-18):849--852, 2012.

\bibitem[MSY06]{MSYtoric}
Dario Martelli, James Sparks, and Shing-Tung Yau.
\newblock The geometric dual of {$a$}-maximisation for toric
  {S}asaki-{E}instein manifolds.
\newblock {\em Comm. Math. Phys.}, 268(1):39--65, 2006.

\bibitem[Rud66]{Rudin}
Walter Rudin.
\newblock {\em Real and complex analysis}.
\newblock McGraw-Hill Book Co., New York-Toronto-London, 1966.

\bibitem[ST25]{SungTakeuchi_CRYam}
Chanyoung Sung and Yuya Takeuchi.
\newblock The {CR} {Y}amabe constant and inequivalent {CR} structures.
\newblock {\em Pacific J. Math.}, 335(1):163--181, 2025.

\bibitem[Zha09]{Zhang_Kcontact}
YongBing Zhang.
\newblock The contact {Y}amabe flow on {$K$}-contact manifolds.
\newblock {\em Sci. China Ser. A}, 52(8):1723--1732, 2009.

\end{thebibliography}

\end{document}